\documentclass[11pt,a4paper,reqno]{amsart}
\usepackage[T1]{fontenc}
\usepackage[utf8]{inputenc}
\usepackage{lmodern}
\usepackage{amsmath,amssymb,mathtools}
\usepackage{graphicx}
\usepackage[expansion=false]{microtype}
\usepackage[a4paper,left=23mm,right=23mm,top=23mm,bottom=25mm,headheight=13pt,headsep=8mm]{geometry}
\usepackage{enumitem}
\usepackage[hidelinks,bookmarksnumbered]{hyperref}

\makeatletter
\renewcommand{\subsection}{\@startsection{subsection}{2}%
  \z@{.5\linespacing\@plus.7\linespacing}{.25\linespacing}%
  {\normalfont\bfseries}}
\makeatother

\numberwithin{equation}{section}

\newtheorem{theorem}{Theorem}[section]
\newtheorem{proposition}[theorem]{Proposition}
\newtheorem{lemma}[theorem]{Lemma}
\newtheorem{corollary}[theorem]{Corollary}
\newtheorem*{theoremA}{Theorem A}
\newtheorem*{theoremB}{Theorem B}
\newtheorem*{theoremC}{Theorem C}
\newtheorem*{theoremD}{Theorem D}
\theoremstyle{definition}
\newtheorem{definition}[theorem]{Definition}
\theoremstyle{remark}
\newtheorem{remark}[theorem]{Remark}

\newcommand{\C}{\mathbb C}
\newcommand{\Z}{\mathbb Z}
\newcommand{\Q}{\mathbb Q}
\newcommand{\Hh}{\mathbb H}
\newcommand{\Gammaone}{\mathrm{SL}_2(\mathbb Z)}
\newcommand{\M}{M_*}
\newcommand{\wt}{\operatorname{wt}}
\newcommand{\tr}{\operatorname{tr}}
\newcommand{\Span}{\operatorname{span}}
\newcommand{\OO}{\mathrm O}
\newcommand{\DD}{\mathcal D}
\newcommand{\e}{\mathrm e}
\newcommand{\cE}{\mathcal E}
\newcommand{\cO}{\mathcal O}
\newcommand{\cV}{\mathcal V}
\newcommand{\ve}{\mathbf e}
\newcommand{\dual}{\vee}
\newcommand{\Sym}{\operatorname{Sym}}
\newcommand{\scriptsum}{\mathop{\vcenter{\hbox{\scalebox{0.55}{$\textstyle\sum$}}}}\nolimits}

\allowdisplaybreaks[2]
\begin{document}

\title[Free modules of vector-valued modular forms]{Free modules of vector-valued modular forms and applications to Jacobi forms}

\author{Jialin Li}

\address{School of Mathematics and Statistics, Wuhan University, Wuhan 430072, Hubei, China}

\email{jlli.math@whu.edu.cn}

\author{Haowu Wang}

\address{School of Mathematics and Statistics, Wuhan University, Wuhan 430072, Hubei, China}

\email{haowu.wangmath@whu.edu.cn}

\author{Bohan Zhang}

\address{School of Mathematics and Statistics, Wuhan University, Wuhan 430072, Hubei, China}

\email{bhzhang.math@whu.edu.cn}

\subjclass[2020]{11F03, 11F27, 11F50, 13C05}

\date{\today}

\begin{abstract}
The space $M(\rho;\mathcal{E})$ of weakly holomorphic modular forms for a dimension-$r$ complex representation $\rho$ of $\mathrm{SL}_2(\mathbb{Z})$ with growth prescribed by $\mathcal{E}$ at the cusp is a free module over $\C[E_4,E_6]$. We first prove that a $\C[E_4,E_6]$-linearly independent homogeneous family $F_1,\ldots,F_r$ in this module gives a basis if and only if $\sum_{i=1}^r\wt(F_i)=12\tr\cE$. We then extend this criterion to Jacobi forms. As applications, we determine the generator weights of free modules of $\mathrm{O}(D_{8d}^+)$-invariant Jacobi forms of index $D_{8d}^{+}(p)$, where $D_{8d}^{+}$ is an even unimodular lattice extending $D_{8d}$, $d\ge2$, and $p=2,3,5$. 
\end{abstract}
\keywords{representations of $\mathrm{SL}_2(\mathbb{Z})$, vector valued modular forms, Jacobi forms, free modules}
\maketitle

\section{Introduction}

Let $\rho:\,\Gammaone\to\operatorname{GL}(V)$ be a complex representation
with $\dim V=r$.
Assume that $\rho(T)$ has finite order, with
$T=\begin{psmallmatrix}1&1\\0&1\end{psmallmatrix}$.
A vector-valued weakly holomorphic modular form is a holomorphic function $F:\,\Hh\to V$ of integral weight $k$ that satisfies
\[
F(\gamma\tau)=(c\tau+d)^k\rho(\gamma)F(\tau),
\qquad \gamma=\begin{pmatrix}a&b\\c&d\end{pmatrix}\in\Gammaone,
\]
and has a Fourier expansion with exponents bounded from below. 
Write $\e(x)=e^{2\pi i x}$ and $q=\e(\tau)$. 

A complete system of exponent conditions
$\cE=\{(\ell_i,c_i):1\le i\le r\}$ consists of a basis
$\ell_1,\ldots,\ell_r$ of the dual space $V^\dual$ and $c_i\in\Q$ such that $\ell_i\circ\rho(T)=\e(c_i)\ell_i$. 
Denote by $M_k(\rho;\cE)$ the space of modular forms of weight $k$ for $\rho$ satisfying $\ell_i(F)=O(q^{c_i})$ for every $i$, and put
$\tr\cE=\sum_i c_i$.

Vector-valued modular forms play an essential role in the theory of Borcherds products and vertex operator algebras.
The inputs of Borcherds products are weakly holomorphic vector-valued
modular forms for Weil representations; see \cite{Bo98, Br02}.
Another source is Zhu's modular invariance theorem for characters of suitable vertex operator algebras \cite{Zhu96,DLM00}. 
More precisely, the characters are transformed with zero weight and have
expansions of the form $q^{h_i-c/24}\sum_{j\ge0}a_{i,j}q^j$,
where $c$ is the central charge and $h_i$ is the conformal weight
of the corresponding irreducible module.
The leading exponents $h_i-c/24$ can be negative.
The spaces $M_k(\rho;\cE)$ permit such componentwise lower bounds on Fourier exponents to be prescribed. 

When $\rho=\mathbf1$ and $\cE=0$, the spaces $M_k(\rho;\cE)$ become 
classical spaces of scalar modular forms.
Their graded ring is $\M:=\bigoplus_k M_k=\C[E_4,E_6]$.
Multiplication by scalar modular forms makes
$M(\rho;\cE):=\bigoplus_k M_k(\rho;\cE)$ a graded $\M$-module.

For the standard holomorphic cusp condition, the freeness over $\M$
was proved by Marks and Mason \cite[Theorem~1]{MM10}. 
Candelori and Franc \cite[Theorem~5.5]{CF17} gave a geometric proof of
freeness and showed that the sum of the generator weights is equal to twelve times the trace of the exponent matrix.
Their argument also gives freeness for nonstandard exponent choices
\cite[Remark~5.6]{CF17}.
Franc and Mason \cite{FM18} established some bounds on generator weights and
their multiplicities.
For standard holomorphic exponents, the generator weights for irreducible
representations of dimensions two and three were determined by Marks
\cite[Theorems~3.1 and~4.1]{Marks11}.
Freeness, the generator-weight sum, and the determinant formula
for general exponent choices are stated in
\cite[Theorem~3.4]{Gan14}.
We give direct analytic proofs adapted to the prescribed cusp
conditions considered here, complementing the geometric approach
of Candelori--Franc.
In particular, we establish the following weight-sum criterion
for a homogeneous free basis. 

\begin{theoremA}
Let $\rho:\, \Gammaone\to\operatorname{GL}(V)$ be a complex representation of dimension $r$.
Assume that $\rho(T)$ has finite order and
$\rho(-I)=(-1)^\kappa I$, with $\kappa\in\{0,1\}$.
For every complete system of exponent conditions $\cE$, the graded module
$M(\rho;\cE)=\bigoplus_{k\equiv\kappa\, (2)} M_k(\rho;\cE)$
is free of rank $r$ over $\M$.
For a homogeneous form $F$, write $\wt(F)$ for its weight.
If $F_1,\ldots,F_r\in M(\rho;\cE)$ are $\M$-linearly independent and
homogeneous, then
\[
\sum_{j=1}^r\wt(F_j)\ge12\tr\cE.
\]
The equality holds if and only if $F_1,\ldots,F_r$ form an $\M$-basis.
\end{theoremA}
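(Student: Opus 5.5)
The plan is to go through the Wronskian-type determinant. For homogeneous $F_1,\dots,F_r\in M(\rho;\cE)$, form the $r\times r$ matrix $\Phi=(\ell_i(F_j))_{i,j}$. Since $\ell_i\circ\rho(T)=\e(c_i)\ell_i$, each entry $\ell_i(F_j)$ is $q^{c_i}$ times a Laurent series in $q$. Linear independence of the $\ell_i$ means $\det\Phi=\det(\ell_i)\cdot\det(F_1,\dots,F_r)$ up to the fixed constant $\det(\ell_i\text{ in a basis of }V^\dual)$.

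Under $\gamma\in\Gammaone$, the determinant $D=\det(F_1,\dots,F_r)$ transforms with weight $\sum_j\wt(F_j)$ and character $\det\rho$. So $D$ is a scalar weakly holomorphic modular form for the one-dimensional character $\det\rho$. Such characters factor through the abelianization $\Z/12$, so $\det\rho(T)=\e(m/12)$ for some $m$, and we have $\e(\tr\cE)=\det\rho(T)$. The growth conditions give $\operatorname{ord}_{q}D\ge\sum_i c_i=\tr\cE$, by expanding the determinant: each term $\prod_i\ell_i(F_{\sigma(i)})$ is $O(q^{\sum c_i})$. Since $D$ is holomorphic on $\Hh$, the valence formula for a character of $\Gammaone$ gives $\operatorname{ord}_\infty D\le \wt(D)/12$ whenever $D\ne0$. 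Hence $\sum\wt(F_j)\ge 12\tr\cE$ as soon as $D\not\equiv0$.

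To get $D\not\equiv0$, I use $\M$-linear independence. Suppose $D\equiv0$. Then over the field of meromorphic functions there are $g_j$, not all zero, with $\sum g_jF_j=0$. By Cramer's rule applied to a maximal nonvanishing minor, the $g_j$ can be taken to be ratios of minors. These minors are modular of suitable weight and character for subrepresentations after choosing coordinates. A cleaner route is to take $g_j$ to be minors of the matrix with a repeated row built from $\rho$-coordinates. Then $\sum g_jF_j=0$ with the $g_j$ scalar weakly modular forms, possibly with character and poles at the cusp. Multiplying by a power of $\Delta$ and by $\eta$-powers kills the character and the poles, giving a relation in $\M$. This contradicts independence. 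I expect this step to need the most care; alternatively, I will work in the fraction field of $\M$ and use that modular-invariance of the kernel forces the coefficients to be modular.

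For the equality statement, first note that freeness follows if some independent family achieves equality and is a basis. So I prove: equality $\Leftrightarrow$ basis, plus existence of a basis.

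\emph{Equality implies basis.} Let $G\in M_k(\rho;\cE)$ be homogeneous. By Cramer's rule, $G=\sum_j (D_j/D)F_j$, where $D_j$ is $D$ with $F_j$ replaced by $G$. Each $D_j$ has the same character as $D$, weight $\sum\wt-\wt F_j+k$, and order at least $\tr\cE$ at $\infty$. Equality means $D$ has weight $12\tr\cE$ and $\operatorname{ord}_\infty D\ge\tr\cE$. The valence formula then forces $D$ to have no zeros in $\Hh$, so $D=c\,\Delta^{\tr\cE}$ (here $\Delta^{s}$ for rational $s$ is defined via $\eta^{24s}$ and is consistent with the character). Therefore $D_j/D$ is holomorphic on $\Hh$, has trivial character, has weight $k-\wt F_j$, and has order $\ge0$ at $\infty$. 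So $D_j/D\in\M$.

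\emph{Basis implies equality, and existence.} Show existence of some basis with weight sum $12\tr\cE$. Then any other basis differs from it by an invertible graded matrix over $\M$, whose determinant is a nonzero constant; since it is homogeneous of weight $\sum\wt F'_j-\sum\wt F_j$, the weight sums agree.

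For existence, I use the inequality just proved. First, $M(\rho;\cE)$ is a finitely generated torsion-free graded $\M$-module of rank $\le r$, because any $r+1$ elements are dependent: the $(r+1)$-tuple determinant argument yields a relation. It has rank exactly $r$, since multiplying any $\rho$-modular form (e.g.\ from vector-valued Poincaré series, or $\Delta^{-N}$ times holomorphic ones) by $\Delta^N$ gives $r$ independent elements. Since $\M$ is a polynomial ring in two variables, graded reflexive modules are free. $M(\rho;\cE)$ is reflexive because it equals the intersection of its localizations at height-one primes; concretely, if $E_4G$ and $E_6G$ lie in the module then so does $G$, as the conditions are holomorphy and cusp order. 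Hence it is free, and I pick a homogeneous basis.

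It remains to show this basis has weight sum exactly $12\tr\cE$. By the inequality, its sum is $\ge12\tr\cE$. To show the determinant $D=\det(F_j)$ equals $c\Delta^{\tr\cE}$, I argue that if $D$ vanished at some point or had excess order at $\infty$, then a suitable $\C$-combination of basis elements would be divisible by $E_4$, $E_6$, or $\Delta$ inside the module (using the reflexivity property above). That would contradict minimality of the basis. I expect this local divisibility argument, especially at $\tau=i,\rho$ where $E_6,E_4$ vanish, to be the main obstacle.
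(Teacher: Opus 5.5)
The gap is the implication from being a basis to $\sum_j\wt(F_j)=12\tr\cE$. Equivalently, you must show that $h=\det(F_1,\dots,F_r)/\eta^{24\tr\cE}$ is constant for a homogeneous basis, and you only announce this step.

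Several parts of your plan are sound. The inequality, obtained from $D/\eta^{24\tr\cE}$ and the valence formula, is correct and is the paper's argument. So is the implication from equality to basis via Cramer's rule. Your freeness argument, depth two over $\C[E_4,E_6]$, is a valid algebraic repackaging of the paper's minimal-relation argument: $E_6A=E_4B$ with $A,B\in M(\rho;\cE)$ forces $A/E_4\in M(\rho;\cE)$.

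Reducing the missing step to the existence of one basis with equality gains nothing. The basis you then take from freeness is arbitrary, so you must prove exactly this statement for it. Your proposed divisibility argument does not go through as sketched. A $\C$-combination of basis elements of different weights is not a modular form. To produce a form vanishing at a zero $\tau_0\in\Hh$ of $h$, you must first multiply by scalar forms $A_j\in M_{K-\wt(F_j)}$ with $A_j(\tau_0)\neq0$. At $\tau_0=i$ every form of weight $\not\equiv0\pmod 4$ vanishes, and at $\tau_0=e^{2\pi i/3}$ every form of weight $\not\equiv0\pmod 6$ vanishes. So the idea fails at the elliptic points unless you add further input. One option: $F_j(i)$ is a $\rho(S)$-eigenvector with eigenvalue $i^{-\wt(F_j)}$, so a linear dependence at $i$ splits according to weights mod $4$; argue similarly at $e^{2\pi i/3}$ with its stabilizer.

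The paper's key idea avoids all interior analysis by using the Serre derivative. Suppose $\mathcal Dh\neq0$. The Leibniz rule for $\mathcal D\det$ then gives an index $j$ with $\det(F_1,\dots,\mathcal DF_j,\dots,F_r)\not\equiv0$. But when you expand $\mathcal DF_j$ in the basis, its $F_j$-coefficient lies in $M_2=0$. Hence $\mathcal DF_j\in\sum_{i\neq j}\M F_i$, a contradiction. So $\mathcal Dh=0$, which gives $h=c\Delta^m$. Such an $h$ has no zeros in $\Hh$, and only the cusp remains. There your idea works exactly as in the paper:
\begin{itemize}
\item equalize weights using Eisenstein multipliers with constant term $1$;
\item take a $\C$-combination in the kernel of the singular leading-coefficient matrix;
\item divide by $\Delta$;
\item compare coefficients in the basis to rule out $m>0$.
\end{itemize}

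Two smaller points. First, you assert finite generation of $M(\rho;\cE)$ without proof. It follows from Cramer's rule: for any independent family, $h_0\,M(\rho;\cE)\subseteq\bigoplus_j\M F_j$ with $h_0=\det(F_1,\dots,F_r)/\eta^{24\tr\cE}$, and $\M$ is Noetherian. Second, in your first route to $D\not\equiv0$, the minors are not modular with a character; they transform under $\wedge^s\rho$ (consider $F,E_4F$ with $V(F)=V$). Only your kernel-invariance alternative is valid.
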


Note that for homogeneous forms $F_1,\ldots,F_r\in M(\rho;\cE)$,
$\M$-linear independence is equivalent to
$\det(F_1,\ldots,F_r)\not\equiv0$;
see Remark~\ref{rem:determinant-independence}.

We now extend this criterion to Jacobi forms and present some precise applications. 
Let $L$ be a positive definite even lattice of even rank $n$, and
let $G$ be a subgroup of $\OO(L)$.
An index-$L$ Jacobi form $\varphi:\,\Hh\times(L\otimes\C)\to \mathbb{C}$ 
is called $G$-invariant if 
$\varphi(\tau,gz)=\varphi(\tau,z)$ for every $g\in G$.
Write $J_{k,L}^{G}$ and $J_{k,L}^{\mathrm{w},G}$ for the spaces of
$G$-invariant holomorphic and weak Jacobi forms of weight $k$, respectively, and put $J_{*,L}^{G}=\bigoplus_kJ_{k,L}^{G}$ and
$J_{*,L}^{\mathrm{w},G}=\bigoplus_kJ_{k,L}^{\mathrm{w},G}$. The theory of Jacobi forms was first introduced by Eichler--Zagier
\cite{EZ85}, and later generalized to lattice-indices by Gritsenko
\cite{Gri88}. 

Let $L^\dual$ be the dual lattice and put $Q_L(x)=(x,x)_L/2$.
The theta coefficient vector of a $G$-invariant Jacobi form takes
values in $\C[L^\dual/L]^G$.
It transforms under the restriction of the contragredient Weil
representation, denoted by $\rho_L^G$.
This space has a basis consisting of the sums
$\sum_{\gamma\in\cO}\ve_\gamma$ over the $G$-orbits
$\cO\subset L^\dual/L$, where $\ve_\gamma$ are the standard basis vectors.
For $\gamma\in\cO$, the weak condition imposes the lower bound
$-\min_{x\in\gamma+L}Q_L(x)$ on Fourier exponents, while the holomorphic
condition imposes the least nonnegative rational number congruent to
$-Q_L(\gamma)$ modulo $\Z$.
Together with the dual of this basis, these bounds give complete
systems $\cE_{L,G}^{\mathrm{w}}$ and $\cE_{L,G}^{\mathrm{hol}}$, respectively. Consequently, theta decomposition gives 
\[
\begin{aligned}
J_{k,L}^{\mathrm{w},G}&\cong M_{k-n/2}(\rho_L^G;\cE_{L,G}^{\mathrm{w}}),\\
J_{k,L}^{G}&\cong M_{k-n/2}(\rho_L^G;\cE_{L,G}^{\mathrm{hol}}).
\end{aligned}
\]
Thus, the generator weights of $G$-invariant Jacobi form modules are those of the corresponding vector-valued modular form modules increased by $n/2$. 

Explicit module structures of Jacobi forms have been studied for several classes of lattices. For root lattices, generators of Weyl invariant Jacobi form modules have been studied in \cite{EZ85, Wir92, AG20, Wang21a, Wang21, SW23, Sakai26, WW20}. 
For the Leech lattice $\Lambda$, generators of Conway invariant Jacobi form modules are known for lattice indices $\Lambda(t)$ with
$1\le t\le3$; see \cite{SW22}. For rank-two lattices, explicit generators and dimension formulas for weak Jacobi forms are given in \cite{WW23}. For these results, the generator weights are determined by determining the ring structure of Jacobi forms or by estimating the dimensions of the spaces of low-weight Jacobi forms. These methods are difficult to apply to general lattices. In this paper, we develop a systematic and efficient approach to this problem. 

Applying Theorem~A through the theta decomposition yields the following result. Its proof is given in Theorem~\ref{thm:invariant-theta} and
Corollary~\ref{cor:invariant-basis}. 

\begin{theoremB}
Put $r=|G\backslash(L^\dual/L)|$. Both modules $J_{*,L}^{\mathrm{w},G}$ and
$J_{*,L}^{G}$ are free of rank $r$ over $\M$.
Then an $\M$-linearly independent homogeneous family
$\varphi_1,\ldots,\varphi_r\in J_{*,L}^{\mathrm{w},G}$ is a basis if and only if
\[
\sum_{i=1}^r\wt(\varphi_i)=\frac{rn}{2}+12\tr\cE_{L,G}^{\mathrm{w}};
\]
an $\M$-linearly independent homogeneous family
$\psi_1,\ldots,\psi_r\in J_{*,L}^{G}$ is a basis if and only if
\[
\sum_{i=1}^r\wt(\psi_i)=\frac{rn}{2}+12\tr\cE_{L,G}^{\mathrm{hol}}.
\]
\end{theoremB}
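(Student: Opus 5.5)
The plan is to reduce Theorem~B to Theorem~A through the theta decomposition, checking that the isomorphisms displayed before the statement are graded $\M$-module isomorphisms with a uniform weight shift of $n/2$.

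\textbf{Step 1: the theta decomposition.} Fix a $G$-invariant weak Jacobi form $\varphi$ of weight $k$. Expand it as
\[
\varphi(\tau,z)=\sum_{\gamma\in L^\dual/L} h_\gamma(\tau)\,\Theta_{L,\gamma}(\tau,z),
\]
and set $F_\varphi=\sum_\gamma h_\gamma\ve_\gamma$.
\begin{itemize}[leftmargin=*]
\item Since $\Theta_{L,g\gamma}(\tau,gz)=\Theta_{L,\gamma}(\tau,z)$ for $g\in\OO(L)$, $G$-invariance of $\varphi$ forces $h_{g\gamma}=h_\gamma$. Hence $F_\varphi\in\C[L^\dual/L]^G$.
\item The theta vector transforms under the Weil representation in weight $n/2$. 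So $F_\varphi$ transforms under the contragredient, restricted to the invariant subspace, that is under $\rho_L^G$, in weight $k-n/2$.
\item Because $n$ is even, $\rho_L^G(-I)$ acts by $\pm1$. The parity condition of Theorem~A then matches the weights $k-n/2$ that actually occur.
\end{itemize}

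\textbf{Step 2: the cusp conditions.} A Fourier coefficient $c(m,\ell)$ of $\varphi$ with $\ell\in\gamma+L$ equals the coefficient of $q^{m-Q_L(\ell)}$ in $h_\gamma$.
\begin{itemize}[leftmargin=*]
\item For weak forms, $c(m,\ell)$ vanishes unless $m\ge0$. Since $m-Q_L(\ell)=m-Q_L(\ell')$ for any $\ell'\in\gamma+L$, the exponents of $h_\gamma$ are bounded below by $-\min_{x\in\gamma+L}Q_L(x)$.
\item For holomorphic forms, the condition is $2m-(\ell,\ell)\ge0$, i.e. exponent $\ge0$. Exponents of $h_\gamma$ lie in $-Q_L(\gamma)+\Z$, so this is the same as exponent at least the least nonnegative representative.
\item The functionals $\ell_\cO$ dual to the orbit-sum basis pick out $h_\gamma$ for any $\gamma\in\cO$. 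They are $\rho(T)$-eigenvectors with eigenvalue $\e(-Q_L(\gamma))$, which is consistent with the chosen $c$'s modulo $\Z$.
\end{itemize}
So $\cE^{\mathrm w}_{L,G}$ and $\cE^{\mathrm{hol}}_{L,G}$ are complete systems of exponent conditions in the sense of Theorem~A.

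\textbf{Step 3: the isomorphism and the module structure.} Conversely, any $F\in M_{k-n/2}(\rho_L^G;\cE)$ gives a Jacobi form $\sum_\gamma F_\gamma\Theta_{L,\gamma}$ of weight $k$, with the right index and the right growth. This sum is $G$-invariant because its components are constant on orbits.

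Both constructions are visibly linear over scalar modular forms: multiplying $\varphi$ by $f\in M_j$ multiplies each $h_\gamma$ by $f$. Hence $\varphi\mapsto F_\varphi$ is an isomorphism of graded $\M$-modules that lowers weights by $n/2$.

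\textbf{Step 4: applying Theorem~A.} Freeness of rank $r=\dim\C[L^\dual/L]^G=|G\backslash(L^\dual/L)|$ follows at once. $\M$-linear independence and the basis property are preserved by the isomorphism. Translating the weight-sum condition gives
\[
\sum_i \wt(\varphi_i)=\sum_i\bigl(\wt(F_{\varphi_i})+n/2\bigr)=\frac{rn}{2}+12\tr\cE,
\]
which is the asserted criterion.

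\textbf{Main obstacle.} The step needing the most care is Step~2, specifically the holomorphic exponent bookkeeping. One must verify that for each $\gamma$ the lower bound on $h_\gamma$ is independent of the orbit representative; this holds because $Q_L$ is $\OO(L)$-invariant. One must also check the normalization of $\rho_L$ as the contragredient Weil representation, so that the $\rho(T)$-eigenvalue is $\e(-Q_L(\gamma))$. The remaining steps are formal.
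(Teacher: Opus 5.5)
Your overall route is the paper's: theta decomposition as a graded $\M$-isomorphism lowering weights by $n/2$, then Theorem~A. However, the third bullet of your Step~1 is false, and this is exactly the point that needs an argument. One has $\rho_L(-I)\ve_\gamma=(-1)^{n/2}\ve_{-\gamma}$, so $\rho_L^G(-I)\ve_{\cO}=(-1)^{n/2}\ve_{-\cO}$. This is a scalar only when every $G$-orbit is stable under negation, for instance if $-1\in G$ or if $L^\dual/L$ is $2$-elementary. Theorem~B allows any $G\subset\OO(L)$. Take $G=\{1\}$ and $L=A_2$, so $L^\dual/L\cong\Z/3\Z$ with basis $\ve_0,\ve_1,\ve_2$. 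Then $\rho_L(-I)$ has eigenvalue $-1$ on $\ve_0$ and $\ve_1+\ve_2$, but $+1$ on $\ve_1-\ve_2$. Jacobi forms of odd weight, anti-invariant under $z\mapsto-z$, do occur. Theorem~A assumes $\rho(-I)=(-1)^\kappa I$ and only concerns weights $k\equiv\kappa\pmod 2$, so it does not apply to $\rho_L^G$. Your Step~4 therefore does not follow: the module is a direct sum of an even-weight part and an odd-weight part, attached to different subrepresentations, and neither has rank $r$ in general. (The representative-independence you flag as the main obstacle is routine by comparison.)

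The missing step is to split $\cV_L^G=V^+\oplus V^-$ into $\rho_L(-I)$-eigenspaces. These are spanned by the vectors $\ve_{\cO}\pm\ve_{-\cO}$, together with $\ve_\cO$ when $\cO=-\cO$. Forms of vector-valued weight $k$ take values in the $(-1)^k$-eigenspace. The essential input is that the bounds at $\cO$ and $-\cO$ coincide: $m(-\cO)=m(\cO)$ and $\{-Q(-\cO)\}=\{-Q(\cO)\}$. Hence the two conditions $f_\cO=O(q^c)$ and $f_{-\cO}=O(q^c)$ are equivalent to $f_\cO\pm f_{-\cO}=O(q^c)$, and $\cE$ splits into complete systems $\cE^\pm$ on $V^\pm$ with $\tr\cE^++\tr\cE^-=\tr\cE$. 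This is not cosmetic. For a complete system not adapted to the eigenspace decomposition, the induced systems on the summands can have strictly larger total trace (compare $\delta\ge0$ in Theorem~\ref{thm:rank3-classification}), and the criterion with $\tr\cE$ would then fail. With the splitting in hand, Theorem~A applied to the even- and odd-weight modules gives ranks $\dim V^+$ and $\dim V^-$, hence $r$ in total. For the criterion, each parity component is free of rank $\dim V^\pm$, so an $\M$-independent homogeneous family of $r$ forms contains exactly $\dim V^\pm$ members of the corresponding parity. In vector-valued weights, each subfamily has weight sum at least $12\tr\cE^\pm$, with equality if and only if it is a basis of its component. Adding the two gives the stated equivalence. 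This is how the paper argues in Theorem~\ref{thm:invariant-theta} and Corollaries~\ref{cor:jacobi-basis} and~\ref{cor:invariant-basis}.
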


The theorem extends straightforwardly to odd-rank lattices when $\mathrm{SL_2(\mathbb{Z})}$ is replaced by $\mathrm{Mp_2(\mathbb{Z})}$. 

As applications, we apply the theorem to lattices $D_{8d}^{+}(p)$,
where $d\ge2$ and $p=2,3,5$. Define
\[
D_{8d}=\{x\in\Z^{8d}:\textstyle\sum_i x_i\equiv0\pmod2\},
\quad h=\tfrac12(1,\ldots,1),
\quad D_{8d}^{+}=D_{8d}\cup(h+D_{8d}).
\]
With the standard Euclidean bilinear form, $D_{8d}^{+}$ is even unimodular, and $\OO(D_{8d}^{+})$ coincides with the Weyl group $W(D_{8d})$ of $D_{8d}$. Put $G_d=W(D_{8d})$.
For a positive integer $m$, $D_{8d}^+(m)$ denotes the same underlying
group with its bilinear form multiplied by $m$.
Write $J_{*,D_{8d}^+(m)}^{G_d}$ and $J_{*,D_{8d}^+(m)}^{\mathrm{w},G_d}$ for
the modules of $G_d$-invariant holomorphic and weak Jacobi forms, respectively. 

For lattice index $D_{8d}^{+}(2)$, let $\rho_d$ be the coefficient
representation on $\cV_d$. We prove that
\[
\cV_d=V_d^{\mathrm{cyc}}\oplus U_d,\qquad
\dim V_d^{\mathrm{cyc}}=2d+1,\qquad \dim U_d=3.
\]
The weak Jacobi form of weight $-4$ on $D_8$ in
\cite[Section~3.1.2]{AG20} gives a theta coefficient vector
$\Phi_d$ of weight $-8d$ whose values span $V_d^{\mathrm{cyc}}$.
Its successive Serre derivatives give the basis in
Corollary~\ref{cor:cyclic-basis}. We prove that projection to $U_d$
is surjective with the induced exponent conditions. Hence
\[
M(\rho_d;\cE_d^{\mathrm{w}})
\cong
\left(\bigoplus_{j=0}^{2d}\M\,\mathcal D^j\Phi_d\right)
\oplus M(\rho_d^U;\cE_d^U),
\]
where $\rho_d^U=\rho_d|_{U_d}$.
Theorem~\ref{thm:rank3-classification} determines the generator weights
for the direct sum of a one-dimensional representation and an irreducible
two-dimensional representation, with exponent conditions that need not
respect this decomposition. Applying it to $M(\rho_d^U;\cE_d^U)$ and
adding the theta weight $4d$ gives the weak case of the following result.

For the holomorphic case, we use the principal-part obstruction criterion
of \cite[Theorem~3.1]{Bo99} and \cite[Theorem~1.17]{Br02}, in the form of Proposition~\ref{prop:principal-parts}, to determine the dimensions above the singular weight $4d$ from the weak spaces.
At singular weight, matching upper and lower bounds gives 
\[
\dim M_0(\rho_d)=\left\lfloor\frac d3\right\rfloor+2.
\]
The freeness then determines the weights of the holomorphic generators and their multiplicities.

\begin{theoremC}
For $d\ge2$, the $\M$-module $J_{*,D_{8d}^+(2)}^{\mathrm{w},G_d}$ is free
with generator-weight multiset
\[
\{-4d+2j:0\le j\le2d\}\sqcup\{-8,-6,-4\};
\]
and $J_{*,D_{8d}^+(2)}^{G_d}$ is free, with generator
weights and multiplicities given in Theorem~\ref{thm:holomorphic-weights}.
\end{theoremC}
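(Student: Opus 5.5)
The weak statement reduces to Theorem~B, i.e.\ Theorem~A applied to $M(\rho_d;\cE_d^{\mathrm{w}})$ with $\rho_d=\rho_{L}^{G_d}$ for $L=D_{8d}^+(2)$. First I determine the orbit space $G_d\backslash(L^\dual/L)$. Here $L^\dual/L\cong D_{8d}^+/2D_{8d}^+$ with quadratic form $Q(x)/2$ on representatives. The $W(D_{8d})$-orbits are represented by classes of small vectors:
\begin{itemize}
\item $x\in\{0,1\}^{8d}$ with even support, taken modulo $2D_{8d}^+$ and up to permutations and even sign changes, which gives the orbits indexed by Hamming weight $2j$;
\item a few exceptional classes coming from $h+D_{8d}$ and from the identification $x\sim \mathbf 1-x$.
\end{itemize}
Counting these gives $r=2d+4$, consistent with $\dim V_d^{\mathrm{cyc}}+\dim U_d$. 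For each orbit I compute $\min_{x\in\gamma+L}Q_L(x)$ and so obtain $\tr\cE_d^{\mathrm{w}}$. By Theorem~B, the target sum of weights $\sum_{j=0}^{2d}(-4d+2j)-18$ must equal $rn/2+12\tr\cE^{\mathrm{w}}$, and I check this first as a consistency test.

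Next, generators. The form $\Phi_d$ has weight $-8d$ as a vector-valued form, i.e.\ Jacobi weight $-4d$; it comes from the $D_8$ weak form of \cite{AG20}, pulled back or tensored to $D_{8d}^+(2)$. The Serre derivatives $\DD^j\Phi_d$ for $0\le j\le 2d$ give Jacobi weights $-4d+2j$. Corollary~\ref{cor:cyclic-basis} says these form a basis of the part living in $V_d^{\mathrm{cyc}}$; I will take this as given. The key structural input is the splitting $\cV_d=V_d^{\mathrm{cyc}}\oplus U_d$ together with surjectivity of the projection onto $U_d$ under the induced exponent conditions $\cE_d^U$. This gives the direct sum decomposition displayed before the theorem, so the weight multiset is the union of $\{-4d+2j\}$ and the generator weights of $M(\rho_d^U;\cE_d^U)$ shifted by $4d$.

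For the three-dimensional piece, I identify $\rho_d^U$ as a one-dimensional character plus an irreducible two-dimensional representation, by computing $\rho(T)$ eigenvalues on $U_d$. I then compute the exponents of $\cE_d^U$, which should be independent of $d$ up to the shift. Theorem~\ref{thm:rank3-classification} then outputs weights $-4d-8$, $-4d-6$, $-4d-4$. Equivalently, I could exhibit three independent forms of these weights, for instance from the $d=2$ data extended by multiplication with theta-type factors, and verify by Theorem~A that their weight sum equals $12\tr\cE_d^U$.

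For the holomorphic module, freeness is immediate from Theorem~B. To get the weights, I compute $\dim J_{k}^{G_d}$ for every $k$. Above the singular weight $4d$, Proposition~\ref{prop:principal-parts} gives the dimension as $\dim J^{\mathrm w}_k$ minus the rank of the principal-part map, which is computed against obstructions in weight $2-k$. At singular weight I use the bound $\dim M_0(\rho_d)=\lfloor d/3\rfloor+2$. The Hilbert series of a free module then determines the generator multiset inductively in $k$, because $\dim J_k=\sum_i\dim M_{k-w_i}$.

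The main obstacle is the singular-weight count together with the surjectivity onto $U_d$, which requires an explicit orbit and exponent analysis. I would handle both through explicit theta series of cosets, for the lower bound, and a principal-part obstruction, for the upper bound.
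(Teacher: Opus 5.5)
\textbf{Main gap: surjectivity of $P_U$.} Your outline has the same architecture as the paper: the cyclic summand generated by the $\DD^j\Phi_d$, the splitting $\cV_d=V_d^{\mathrm{cyc}}\oplus U_d$, Theorem~\ref{thm:rank3-classification} on $U_d$, and then principal parts plus a Hilbert-series count. However, the step you yourself call the key structural input is never proved. That step is the surjectivity of $P_U\colon M(\rho_d;\cE_d^{\mathrm w})\to M(\rho_d^U;\cE_d^U)$.

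Citing the decomposition displayed in the introduction is circular, since it is essentially the weak half of Theorem~C. And ``explicit orbit and exponent analysis'' is not a mechanism.

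The step is not formal. $\cE_d^U$ is the \emph{projected} growth condition, which is weaker than the restriction of $\cE_d^{\mathrm w}$ to $U_d$. Take $F=h_0u_0+h_1u_1+h_2u_2$ with $h_0=cq^{-1}+O(1)$ and $c\ne0$. Regarded as $\cV_d$-valued, it has $\cO_0$-coefficient $2^{4d-2}h_0$, which violates the bound $O(1)$ at $\cO_0$. Without a lift you only have the left-exact sequence $0\to M(\rho_d^{\mathrm{cyc}};\cE_d^{\mathrm{cyc}})\to M(\rho_d;\cE_d^{\mathrm w})\to M(\rho_d^U;\cE_d^U)$, and you cannot split off the weights $-8,-6,-4$.

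The missing idea is a correction inside $V_d^{\mathrm{cyc}}$. By Theorem~\ref{thm:basis-criterion}, $\det(\Phi_d,\DD\Phi_d,\ldots,\DD^{2d}\Phi_d)=\lambda\eta^{24\tr\cE_d^{\mathrm{cyc}}}$ with $\lambda\ne0$. Hence the matrix $A$ of lowest coefficients $[q^{-m/2}]\,v_m^\dual(\DD^j\Phi_d)$ has $\det A=\lambda$. Solve $Ab=(c,0,\ldots,0)^{\mathsf t}$ and put $H=\Delta^{-1}\sum_j b_jE_{\ell_j}\DD^j\Phi_d$ with $\ell_j=k+8d+12-2j\ge4$. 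This uses $k\ge-4d-8$ from Proposition~\ref{prop:U-weights}. Then $F-H\in M_k(\rho_d;\cE_d^{\mathrm w})$ and $P_U(F-H)=F$.

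Your alternative of building three forms from $d=2$ data would at best give generators of $M(\rho_d^U;\cE_d^U)$. It says nothing about lifting them. Also, the exponents of $\cE_d^U$ are $-1,-d/2,-(d+1)/2$, which is not a $d$-independent pattern up to a shift. Theorem~\ref{thm:rank3-classification} gives the uniform list $-4d-8,-4d-6,-4d-4$ only after a case split: $\delta=0$ for $d=2$, case~(2) for $d=4$, and case~(3) otherwise.

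\textbf{Singular weight.} For the holomorphic module your route agrees with the paper above weight $4d$. But $\dim M_0(\rho_d)=\lfloor d/3\rfloor+2$ is assumed, and your closing sketch does not carry it out.

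The paper gets the lower bound $\dim M_0(\rho_d^{\mathrm{cyc}})\ge\lfloor d/3\rfloor+1$ by counting the $d(d+1)$ linear conditions that cut $M_0(\rho_d^{\mathrm{cyc}})$ out of $M_0(\rho_d^{\mathrm{cyc}};\cE_d^{\mathrm{cyc}})$. The latter has dimension $d(d+1)+\lfloor d/3\rfloor+1$ by Corollary~\ref{cor:cyclic-basis}. The matching upper bound comes from Lemma~\ref{lem:zero-weight-bound}: an invariant vector is killed by $\DD$, so its top coefficient in $M_{4d}$ determines it. The summand $U_d=\mathbf 1\oplus\sigma$ then adds exactly one invariant.

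Your idea of an obstruction-based upper bound is a legitimate alternative to that lemma, but only after an observation you omit. Since $\rho_d$ factors through $\Gammaone/\Gamma(2)\cong S_3$ and $X(2)$ has genus zero, $S_2((\rho_d^{\mathrm{cyc}})^\dual)=0$. Then the sequence of Proposition~\ref{prop:principal-parts} is exact onto $\mathcal P(\cE_d^{\mathrm{cyc}})$ also at $k=0$. So the $d(d+1)$ conditions are independent, and $\dim M_0(\rho_d^{\mathrm{cyc}})=\lfloor d/3\rfloor+1$ follows directly. Without this vanishing, the obstruction bound does not close. Explicit theta series for the lower bound are unnecessary, given the trivial count.
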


For lattice indices $D_{8d}^{+}(3)$ and $D_{8d}^{+}(5)$,
the invariant coefficient Weil representations factor through $A_4$
and $A_5$, respectively.
We compute their irreducible multiplicities from the character tables.
The canonical holomorphic exponent conditions respect these decompositions.
Applying the results of Section~\ref{sec:rankthree} to the irreducible
summands and increasing their generator weights by $4d$ gives the following result.

\begin{theoremD}
For $d\ge2$, both $\M$-modules $J_{*,D_{8d}^+(3)}^{G_d}$ and
$J_{*,D_{8d}^+(5)}^{G_d}$ are free with generator weights in
$\{4d,\, 4d+2,\, 4d+4,\, 4d+6,\, 4d+8\}$.
Their multiplicities are given in
Theorems~\ref{thm:holomorphic-index-three} and
\ref{thm:holomorphic-index-five}, respectively. 
\end{theoremD}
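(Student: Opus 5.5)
The plan is to reduce Theorem~D, via the theta decomposition $J_{k,L}^{G}\cong M_{k-n/2}(\rho_L^G;\cE_{L,G}^{\mathrm{hol}})$ with $n/2=4d$, to a statement about vector-valued modular forms of weight $k-4d$ for $\rho_{L}^{G_d}$ with $L=D_{8d}^+(p)$, $p=3,5$. Freeness follows at once from Theorem~B. The work is to show that every generator weight of $M(\rho;\cE^{\mathrm{hol}})$ lies in $\{0,2,4,6,8\}$ and to count multiplicities.

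\textbf{Step 1: the representation.} Since $D_{8d}^+$ is unimodular, $L^\dual/L=\tfrac1p D_{8d}^+/D_{8d}^+\cong D_{8d}^+/pD_{8d}^+$ is a quadratic space over $\mathbb F_p$ with form $Q(x)/p$. The Weil representation factors through $\mathrm{SL}_2(\mathbb F_p)$. Because $\rho(-I)$ acts as $x\mapsto -x$, and $-1\in W(D_{8d})$, on invariants it becomes trivial and the action factors through $\mathrm{PSL}_2(\mathbb F_p)$, which is $A_4$ for $p=3$ and $A_5$ for $p=5$. I enumerate the $G_d$-orbits on $D_{8d}^+/pD_{8d}^+$ by the multiset of coordinate residues mod $p$ up to sign, with the parity and $h$-coset constraints. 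I then compute the trace of $T$ (via $\e(-Q(\gamma))$ on orbits) and of $S$ (a Gauss-sum expression) on $\C[L^\dual/L]^{G_d}$. The character tables then give the multiplicities of the irreducibles: dimensions $1,1',1'',3$ for $A_4$, and $1,3,3',4,5$ for $A_5$.

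\textbf{Step 2: exponents respect the decomposition.} The holomorphic conditions assign to each $\rho(T)$-eigenvalue $\e(c)$ the least $c\ge0$ with that residue. So $\cE^{\mathrm{hol}}$ depends only on the eigenspace decomposition of $\rho(T)$. It is therefore compatible with any $\rho$-invariant decomposition into irreducibles, because each summand is $\rho(T)$-stable and we may choose eigenbases within each summand. Hence $M(\rho;\cE^{\mathrm{hol}})$ is the direct sum of the modules for the irreducible summands with their holomorphic exponents.

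\textbf{Step 3: the irreducible summands.} For each irreducible $\sigma$ of $A_4$ or $A_5$, pulled back to $\Gammaone$, I invoke the results of Section~\ref{sec:rankthree}, together with Marks' classification for dimensions $2$ and $3$ and explicit constructions for dimensions $4$ and $5$, to obtain the generator weights of $M(\sigma;\cE^{\mathrm{hol}})$. For a one-dimensional character $\chi$ with $\chi(T)=\e(c)$, $0\le c<1$, the generator is $\eta^{24c}$, whose weight $12c\in\{0,4,8\}$ for $c\in\{0,1/3,2/3\}$. For the higher-dimensional summands I use Theorem~A: I exhibit linearly independent forms whose weights sum to $12\tr\cE$ (for example Serre derivatives of a lowest-weight form, or symmetric-power and tensor constructions from weight-one theta series of level $3$ or $5$), and check that all weights lie in $[0,8]$. \textbf{The main obstacle} is this step for the $4$- and $5$-dimensional irreducibles of $A_5$. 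There I would first try the cyclic construction $F,\DD F,\ldots$, starting from a minimal-weight form built from $\theta$-series of the $A_4$ or $E_8$-type lattice of level $5$, and verify the weight sum through the trace of $\cE$.

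Finally, I collect the multiplicities of each irreducible, add $4d$ to every weight, and record the resulting counts, which are the content of Theorems~\ref{thm:holomorphic-index-three} and \ref{thm:holomorphic-index-five}.
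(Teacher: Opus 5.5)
Your reduction follows the paper up to the last step. Theta decomposition, the passage to $\mathrm{PSL}_2(\mathbb F_p)\cong A_4,A_5$, compatibility of the canonical holomorphic exponents with invariant direct sums, and the rank-one and rank-three lemmas (giving weights $0,4,8$ and $2,4,6$) are all what the paper uses; your level-$p$ argument is a fine substitute for the paper's presentation argument. The genuine gap is Step~3 for the irreducibles $\varrho_4$ and $\varrho_5$ of $A_5$, and the cyclic construction you propose there cannot work. Their canonical $T$-exponents are $\tfrac15,\tfrac25,\tfrac35,\tfrac45$ and $0,\tfrac15,\tfrac25,\tfrac35,\tfrac45$, both of trace $2$, so Theorem~A forces the generator weights to sum to $24$. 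A basis $F,\DD F,\ldots,\DD^{r-1}F$ would need either $4k+12=24$, i.e.\ $k=3$, which is odd although $\rho(-I)=I$ forces even weights, or $5k+20=24$, which has no integer solution. The actual weights are $4,6,6,8$ and $2,4,4,6,8$. Your fallback of unspecified explicit constructions from theta series does not produce them.

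The missing idea is the structural elimination of Proposition~\ref{prop:rank-four-five}, which needs no construction at all. Irreducibility gives $V(F)=V$ for every nonzero $F$, so \eqref{eq:wronskian-weight} yields $r(\wt F+r-1)\ge24$. Hence $\wt F\ge4$ for $r=4$ and $\wt F\ge2$ for $r=5$. The Serre derivative is injective, since a nonzero form in its kernel is $\eta^{2k}v$ with $\C v$ an invariant line. Among the even weight lists with sum $24$ and these lower bounds, every list other than the claimed one fails injectivity of $\DD$, which you see by comparing $\dim M_k(\rho;\cE)$ with $\dim M_{k+2}(\rho;\cE)$. For instance, $(6,6,6,6)$ gives $\dim M_6(\rho;\cE)=4$ but $M_8(\rho;\cE)=0$. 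The single exception is $(2,2,4,4,12)$ for $r=5$. There $M_6(\rho;\cE)=E_4\Span_{\C}\{F_1,F_2\}$, so the meromorphic span of $F_1,F_2,\DD F_1,\DD F_2$ is preserved by $q\,d/dq$. It would then contain $F_1,\DD F_1,\ldots,\DD^4F_1$, contradicting Proposition~\ref{prop:derivative-independent}.

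There is also a smaller omission in Step~1. For $A_5$ the traces of $S$ and $T$ do not determine the multiplicities, even with the full $T$-spectrum. The virtual character of $\mathbf1+\varrho_4-\varrho_5$ vanishes on the classes $1,2A,5A,5B$ and equals $3$ on $3A$. Yet $\mathbf1\oplus\varrho_4$ and $\varrho_5$ have different generator weights, $0,4,6,6,8$ versus $2,4,4,6,8$. So you also need the character value at $ST$.

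Computing $\tr\rho(S)$ and $\tr\rho(ST)$ orbit by orbit on $\C[A]^{G_d}$ with Gauss sums is heavy for general $d$. The paper avoids this as follows:
\begin{itemize}
\item It identifies the full coefficient representation with $\sigma_p^{\otimes 8d}$ on $\C[\mathbb F_p]^{\otimes 8d}$.
\item Even sign changes kill every tensor component that mixes the $\pm1$-eigenspaces of $\ve_a\mapsto\ve_{-a}$.
\item This gives $\rho_{d,p}\cong\Sym^{8d}(\sigma_p^+)\oplus\Sym^{8d}(\sigma_p^-)$.
\end{itemize}
All the character values then follow from $\sum_k\chi_{\Sym^kU}(g)u^k=\det(1-ug|_U)^{-1}$.
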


The remainder of this paper is organized as follows.
Section~\ref{sec:prescribed} proves Theorem~A. 
Section~\ref{sec:rankthree} determines the generator weights for $\rho$ of low dimensions.
Section~\ref{sec:jacobi} relates $G$-invariant Jacobi forms to vector-valued
modular forms through theta decomposition and proves Theorem~B.
Section~\ref{sec:Dfamily} treats the lattice index $D_{8d}^{+}(2)$ and proves Theorem~C.
Section~\ref{sec:odd-indices} treats lattice indices $D_{8d}^{+}(3)$ and
$D_{8d}^{+}(5)$ and proves Theorem~D.

\bigskip
\noindent
\textbf{Acknowledgements}. This work was supported by the National Key R$\&$D Program of China (Grant No. 2024YFA1014500). H. Wang thanks Kaiwen Sun for helpful discussions. 

\bigskip
\noindent
\textbf{Declaration of AI Use.} 
The authors used AI tools to assist with language polishing and proofreading, and to perform some computer computations in the final section.

\section{Vector-valued modular forms with prescribed growth}\label{sec:prescribed}

We prove freeness and the weight-sum basis criterion for vector-valued
modular forms with prescribed growth at the cusp. We also deduce
a tensor-product formula.

\subsection{Vector-valued modular forms}

We define the prescribed cusp conditions and establish Wronskian and
determinant bounds. Fix the standard generators of $\mathrm{SL}_2(\mathbb{Z})$: 
\[
S=\begin{pmatrix}0&-1\\1&0\end{pmatrix},
\qquad
T=\begin{pmatrix}1&1\\0&1\end{pmatrix}.
\]
For an integer $k$ and a holomorphic function $F:\Hh\to V$, write
\[
(F|_k\gamma)(\tau)=(c\tau+d)^{-k}F(\gamma\tau),
\qquad
\gamma=\begin{pmatrix}a&b\\c&d\end{pmatrix}\in\Gammaone.
\]
Put $q=\e(\tau)=e^{2\pi i\tau}$.  Throughout Sections~\ref{sec:prescribed} and \ref{sec:rankthree}, $(\rho,V)$ denotes a finite-dimensional complex representation of $\Gammaone$ such that $\rho(T)$ has finite order.  Assume
\[
\rho(-I)=(-1)^\kappa I,
\qquad\kappa\in\{0,1\},
\]
and use only weights $k\equiv\kappa\pmod2$.  A general representation is the direct sum of its two $(-I)$-eigenspaces, so all statements apply componentwise.

We consider holomorphic functions $F:\Hh\to V$ satisfying
\[
F|_k\gamma=\rho(\gamma)F
\qquad \text{for any $\gamma\in\Gammaone$},
\]
and having Fourier expansions with exponents bounded below in $\frac1N\Z$ for some positive integer $N$.  We write $M_k(\rho)$ for the space of such functions with no negative Fourier exponents. A form in $M_k(\rho)$ is a \emph{cusp form} if all its nonzero Fourier terms have strictly positive exponents. We put $M(\rho)=\bigoplus_kM_k(\rho)$. For Fourier expansions and
vector-valued Poincar\'e series, see \cite{KM03,KM04}.  For a homogeneous form $F$ of weight $k$, we write $\wt(F)=k$.  In the scalar case,
\[
M_k=M_k(\mathbf1),\qquad \M=\bigoplus_k M_k=\C[E_4,E_6].
\]
Here $E_j$ denotes the normalized Eisenstein series of even weight $j\ge4$, and $E_2$ is the normalized quasimodular Eisenstein series of weight two.  We use
\[
\eta(\tau)=q^{1/24}\prod_{n\ge1}(1-q^n),\qquad
\Delta=\eta^{24}=\frac{E_4^3-E_6^2}{1728}.
\]
Let $\nu_\eta^2$ denote the character defined by
$\eta^2|_1\gamma=\nu_\eta^2(\gamma)\eta^2$.
For an integer $m$, the notation $\nu_\eta^{2m}$ means $(\nu_\eta^2)^m$.
Every one-dimensional character of $\Gammaone$ is a power of $\nu_\eta^2$ and is determined by its value on $T$.

\begin{definition}\label{def:exponent-conditions}
A system of \emph{exponent conditions} for $\rho$ is a finite collection
\[
\cE=\{(\ell_i,c_i)\}_{i=1}^m
\]
with $\ell_i\in V^\dual\setminus\{0\}$ and $c_i\in\Q$ such that
\[
\ell_i\circ\rho(T)=\e(c_i)\ell_i.
\]
It is \emph{complete} if $m=\dim V$ and $\ell_1,\ldots,\ell_m$ is a basis of $V^\dual$.  We define
\[
M_k(\rho;\cE)
=
\{F:F|_k\gamma=\rho(\gamma)F\text{ for all }\gamma\in\Gammaone,
\ \ell_i(F)=O(q^{c_i})\text{ for every }i\},
\]
where $F$ is holomorphic on $\Hh$ and has Fourier exponents bounded below.
For a complete system, put
\[
\tr\cE=\sum_i c_i.
\]
\end{definition}

Redundant conditions can be removed as follows.  Order the pairs so that
$c_1\ge\cdots\ge c_m$.  Retain $(\ell_i,c_i)$ if $\ell_i$ does not lie in the span of the previously retained covectors.  The retained pairs form a system $\cE'$ satisfying
\begin{equation}\label{eq:reduced-exponent-conditions}
|\cE'|=m':=\dim\Span_{\C}\{\ell_1,\ldots,\ell_m\},
\qquad M_k(\rho;\cE')=M_k(\rho;\cE).
\end{equation}
Indeed, each discarded $\ell_i$ is a linear combination of retained covectors whose bounds are at least $c_i$, so their conditions imply $\ell_i(F)=O(q^{c_i})$.  The reverse inclusion follows since every retained condition belongs to $\cE$.  In particular, if the original covectors span $V^\dual$, the retained system is complete. Except when removing redundant conditions, all systems used below are complete.

For a nonzero homogeneous form $F$, set
\[
V(F)=\Span_{\C}\{F(\tau):\tau\in\Hh\}.
\]

\begin{lemma}\label{lem:value-span}
The space $V(F)$ is $\rho$-invariant.  If
\[
F=\sum_{j=1}^m f_jv_j
\]
with $v_1,\ldots,v_m$ linearly independent and $f_1,\ldots,f_m$ linearly independent over $\C$, then $\dim V(F)=m$.
\end{lemma}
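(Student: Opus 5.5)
For $\rho$-invariance, I would use the transformation law directly. For $\gamma\in\Gammaone$ and $\tau\in\Hh$, the identity $F|_k\gamma=\rho(\gamma)F$ gives
\[
\rho(\gamma)F(\tau)=(c\tau+d)^{-k}F(\gamma\tau).
\]
The right side is a nonzero scalar multiple of $F(\gamma\tau)$, which lies in $V(F)$. The vectors $F(\tau)$ span $V(F)$, so $\rho(\gamma)V(F)\subseteq V(F)$ for every $\gamma$. Hence $V(F)$ is $\rho$-invariant.

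For the dimension statement, the inclusion $V(F)\subseteq\Span\{v_1,\ldots,v_m\}$ is immediate from $F(\tau)=\sum_j f_j(\tau)v_j$. This gives $\dim V(F)\le m$.

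For the reverse inequality, I would argue by duality. Suppose $V(F)$ were a proper subspace of $W=\Span\{v_j\}$. Then there is a linear functional $\lambda$ on $W$ that is nonzero on $W$ but vanishes on $V(F)$. Writing $a_j=\lambda(v_j)$, not all $a_j$ are zero, because the $v_j$ form a basis of $W$ and $\lambda\ne0$. Evaluating on the spanning vectors of $V(F)$ gives
\[
0=\lambda(F(\tau))=\sum_j a_jf_j(\tau)\qquad\text{for all }\tau\in\Hh.
\]
This is a nontrivial $\C$-linear relation among $f_1,\ldots,f_m$, contradicting their linear independence. Therefore $V(F)=W$ and $\dim V(F)=m$.

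Equivalently, one can phrase the second step as follows. The linear map $\C^m\to\{\text{functions on }\Hh\}$, $a\mapsto\sum a_jf_j$, is injective, so some choice of points $\tau_1,\ldots,\tau_m$ makes the matrix $(f_j(\tau_i))$ invertible. The vectors $F(\tau_i)$ then span $W$.

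No step is a real obstacle. The only point needing care is that the scalar $(c\tau+d)^{-k}$ is nonzero, which holds because $\tau\in\Hh$.
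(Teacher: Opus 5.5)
Your proposal is correct and follows the paper's proof essentially step for step: the paper also deduces invariance from $\rho(\gamma)F(\tau)=(c\tau+d)^{-k}F(\gamma\tau)$, and gets $\dim V(F)=m$ from a nonzero functional on $\Span\{v_1,\ldots,v_m\}$ that vanishes on $V(F)$, which yields a nontrivial relation $\sum_j\ell(v_j)f_j=0$. One small remark: $(c\tau+d)^{-k}\neq0$ is not actually needed for invariance, since any scalar multiple of $F(\gamma\tau)$ already lies in $V(F)$.
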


\begin{proof}
For $\gamma\in\Gammaone$ and $\tau\in\Hh$,
\[
\rho(\gamma)F(\tau)=(c\tau+d)^{-k}F(\gamma\tau),
\]
so $V(F)$ is invariant.  Since $V(F)\subset\Span_{\C}\{v_1,\ldots,v_m\}$, we have $\dim V(F)\le m$.  If $\dim V(F)<m$, there is a nonzero functional $\ell$ on this latter span that vanishes on $V(F)$. Hence
\[
0=\ell(F(\tau))=\sum_{j=1}^m\ell(v_j)f_j(\tau)
\qquad(\tau\in\Hh).
\]
At least one $\ell(v_j)$ is nonzero, since the $v_j$ form a basis of the span.  This contradicts the linear independence of the $f_j$.  Thus $\dim V(F)=m$.
\end{proof}

Let $F\ne0$ and put $m=\dim V(F)$.  Restrict the covectors in a complete system
\[
\cE=\{(\ell_i,c_i)\}_{i=1}^r
\]
to $V(F)$ and discard zero restrictions.  The remaining covectors span $V(F)^\dual$.  Apply \eqref{eq:reduced-exponent-conditions} and write the retained system as
\[
\cE_F=\{(\ell_{i_j}|_{V(F)},c_{i_j}):1\le j\le m\}.
\]
Then $\cE_F$ is complete and describes exactly the conditions imposed by $\cE$ on forms taking values in $V(F)$.  We compute $W(F)$ in the basis dual to its retained covectors.

For a homogeneous form of weight $k$, the Serre derivative is
\[
\DD_kF=q\frac{dF}{dq}-\frac{k}{12}E_2F.
\]
For $j\ge1$ set
\[
\DD^jF=\DD_{k+2j-2}\circ\cdots\circ\DD_kF,
\qquad \DD^0F=F.
\]
We usually write simply $\DD F$.  It has weight $k+2$, obeys the Leibniz rule, and preserves every prescribed lower bound on Fourier exponents.  Thus
\[
\DD:M_k(\rho;\cE)\longrightarrow M_{k+2}(\rho;\cE).
\]
Since $q\,d\log\eta/dq=E_2/24$, one also has
\[
\DD_kF=0
\quad\Longleftrightarrow\quad
\frac{d}{d\tau}(\eta^{-2k}F)=0
\quad\Longleftrightarrow\quad
F=\eta^{2k}v
\]
for a constant vector $v\in V$.  If $F\ne0$, its transformation law then gives
$\rho(\gamma)v=\nu_\eta^{2k}(\gamma)v$.  In particular, the value span of a nonzero form killed by $\DD$ is a one-dimensional subrepresentation.

\begin{proposition}\label{prop:derivative-independent}
Let $F\ne0$ be homogeneous and put $m=\dim V(F)$.  Then
\[
F,\DD F,\ldots,\DD^{m-1}F
\]
are linearly independent over the field of meromorphic functions on $\Hh$, and hence over $\M$.
\end{proposition}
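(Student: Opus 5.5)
Suppose the derivatives are dependent over the field $\mathcal K$ of meromorphic functions on $\Hh$, and let $s<m$ be the least index with $\DD^sF\in\Span_{\mathcal K}\{F,\ldots,\DD^{s-1}F\}$. The plan is to show that the $\mathcal K$-span $W_s=\Span_{\mathcal K}\{F,\DD F,\ldots,\DD^{s-1}F\}$ is stable under differentiation and under the twisted action of $\Gammaone$, and then to produce from it a proper nonzero subspace of $V$ containing $V(F)$, contradicting $\dim V(F)=m$.

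\emph{Step 1: differentiation.} Write $\DD^sF=\sum_{j<s}a_j\DD^jF$ with $a_j\in\mathcal K$. Since $\DD_k=q\,d/dq-\tfrac{k}{12}E_2$, applying $\DD$ to a $\mathcal K$-combination of the $\DD^jF$ stays in $W_s$ after adding $E_2$-multiples. Hence $W_s$ is stable under $d/d\tau$. By the minimality of $s$, the vectors $F,\ldots,\DD^{s-1}F$ form a $\mathcal K$-basis of $W_s$.

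\emph{Step 2: the Wronskian / linear ODE argument.} This is the main step. Choose coordinates of $V$, so $F=(f_1,\ldots,f_r)^{t}$. The relation shows that every component $f_i$ satisfies the same linear ODE of order $s$,
\[
\DD^sf-\sum_{j<s}a_j\DD^jf=0,
\]
where $\DD^j$ acts on scalar functions by the same weight-$k$ formulas. On a simply connected open set $U\subset\Hh$ avoiding the poles of the $a_j$, the solution space of this ODE has dimension at most $s$. Therefore the $\C$-span of $f_1,\ldots,f_r$ restricted to $U$ has dimension at most $s$. By the identity theorem, the same bound holds on all of $\Hh$.

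\emph{Step 3: contradiction.} Pick a $\C$-basis $g_1,\ldots,g_t$ of $\Span_\C\{f_i\}$, so $t\le s<m$. Write $F=\sum_j g_jv_j$ with constant vectors $v_j$. Regrouping into linearly independent vectors, we may assume the $v_j$ are independent; then the $g_j$ remain independent after collecting terms. Lemma~\ref{lem:value-span} then gives $\dim V(F)\le t<m$, a contradiction. Independence over $\M$ follows, since $\M\subset\mathcal K$. Notably, the transformation law is not even needed here, except implicitly through the definition of $m$.

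The main obstacle is justifying Step 2 carefully: the coefficients $a_j$ are merely meromorphic, and the order-$s$ ODE bound must be applied locally. The cleaner alternative I would try first is a Wronskian formulation. Suppose $\Span_\C\{f_i\}$ had dimension at least $s+1$, with independent components $f_{i_0},\ldots,f_{i_s}$. Their Wronskian in $q\,d/dq$ is then not identically zero, by the classical criterion for analytic functions. Yet that Wronskian equals the determinant of $(\DD^jf_{i_l})_{0\le j,l\le s}$, because the $E_2$-correction terms are triangular combinations of lower derivatives. The rows of this matrix are $\mathcal K$-dependent by the relation, so the determinant vanishes, which is a contradiction.
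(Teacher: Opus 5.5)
Your argument is correct, but it takes a different route from the paper's. The paper proves directly that $\det(F,\DD F,\ldots,\DD^{m-1}F)\not\equiv0$, using the $q$-expansion at the cusp. It row-reduces the $m$ independent component functions to $g_1,\ldots,g_m$ with distinct leading exponents $\lambda_1<\cdots<\lambda_m$. It then notes that the leading coefficient of $\DD^{j}g_i$ is $a_iP_j(\lambda_i)$ with $P_j$ monic of degree $j$. The determinant therefore has leading term $q^{\sum_i\lambda_i}\prod_i a_i\prod_{i<j}(\lambda_j-\lambda_i)\ne0$, a Vandermonde computation.

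You instead argue by contradiction from a minimal $\mathcal K$-relation. You then use either the local bound $\le s$ on the solution space of an order-$s$ linear ODE together with the identity theorem, or the classical Wronskian criterion for analytic functions. Both versions are sound: the leading coefficient of $\DD^s$ in $d/d\tau$ is a nonzero constant, and the $E_2$-corrections are triangular in lower derivatives.

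What each approach buys:
\begin{itemize}
\item Your route needs no Fourier expansion, so, as you observe, it applies to any holomorphic $V$-valued function on $\Hh$. It relies on a standard external fact from ODE theory.
\item The paper's route is self-contained; in effect it proves the Wronskian criterion for $q$-series with exponents in $\frac1N\Z$. It also identifies the exact leading exponent $\sum_i\lambda_i$ of the Wronskian at the cusp.
\end{itemize}

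Minor points:
\begin{itemize}
\item Your Step~1 and the announced $\Gammaone$-stability of $W_s$ are never used.
\item In Step~3 the regrouping and the appeal to Lemma~\ref{lem:value-span} are unnecessary. Writing $F=\sum_{j\le t}g_jv_j$ already gives $V(F)\subseteq\Span_{\C}\{v_1,\ldots,v_t\}$, hence $\dim V(F)\le t$.
\end{itemize}
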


\begin{proof}
We use the leading-coefficient calculation for the modular Wronskian; compare \cite[Section~3]{Mason07}.  Choose coordinates on $V(F)$, and let $f_1,\ldots,f_m$ be the component functions.  They are linearly independent over $\C$, since a nontrivial constant relation would place $V(F)$ in a proper coordinate hyperplane.  Eliminating their lowest Fourier coefficients successively gives a basis $g_1,\ldots,g_m$ of their span with
\[
g_i=a_iq^{\lambda_i}+O(q^{\lambda_i+1/N}),
\qquad a_i\ne0,\qquad \lambda_1<\cdots<\lambda_m,
\]
where $N$ is a common denominator of the Fourier exponents.  At each step the kernel of the lowest-coefficient functional has codimension one and has strictly larger lowest Fourier exponent.  Thus this elimination is possible.

Put $k=\wt(F)$ and
\[
P_0(X)=1,\qquad
P_j(X)=\prod_{a=0}^{j-1}\left(X-\frac{k+2a}{12}\right).
\]
Since $E_2=1+O(q)$, the coefficient of $q^{\lambda_i}$ in $\DD^jg_i$ is $a_iP_j(\lambda_i)$.  Each $P_j$ is monic of degree $j$, so
\[
\det(\DD^{j-1}g_i)_{1\le i,j\le m}
=q^{\scriptsum_i\lambda_i}\left(
\prod_{i=1}^m a_i
\prod_{1\le i<j\le m}(\lambda_j-\lambda_i)
+O(q^{1/N})\right).
\]
The displayed coefficient is nonzero.  The change from $f_i$ to $g_i$ is an invertible constant change of coordinates, so $\det(F,\DD F,\ldots,\DD^{m-1}F)\not\equiv0$.  Hence the columns are linearly independent over the field of meromorphic functions on $\Hh$.
\end{proof}

The following determinant calculation will be used below.

\begin{lemma}\label{lem:determinant}
Let $\cE=\{(\ell_i,c_i)\}_{i=1}^r$ be complete and let $F_1,\ldots,F_r$ be homogeneous forms of weights $k_1,\ldots,k_r$ in $M(\rho;\cE)$.  Then $12\tr\cE\in\Z$ and
\[
\frac{\det(F_1,\ldots,F_r)}{\eta^{24\tr\cE}}
\in M_{\scriptsum_i k_i-12\tr\cE}.
\]
Here the determinant is computed with respect to the basis of $V$ dual to $\ell_1,\ldots,\ell_r$.
\end{lemma}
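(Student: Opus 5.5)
Our plan is to study $D(\tau)=\det(F_1,\ldots,F_r)$, computed in the basis $v_1,\ldots,v_r$ of $V$ dual to $\ell_1,\ldots,\ell_r$, so that the $(i,j)$ entry is $\ell_i(F_j)$. We show that $D$ is a scalar form with a character, and then compare its vanishing order at the cusp with that of $\eta^{24\tr\cE}$.

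\emph{Transformation law.} From $F_j|_{k_j}\gamma=\rho(\gamma)F_j$, the matrix $(F_1,\ldots,F_r)$ transforms as $(F_j(\gamma\tau))_j=\rho(\gamma)\,(F_j(\tau))_j\operatorname{diag}\bigl((c\tau+d)^{k_j}\bigr)$. Taking determinants gives
\[
D|_{K}\gamma=\det\rho(\gamma)\,D,\qquad K=\scriptsum_i k_i.
\]
Hence $D$ is a scalar weakly holomorphic form of weight $K$ for the character $\chi=\det\rho$. Every character of $\Gammaone$ is a power of $\nu_\eta^2$, determined by its value at $T$. Since $\ell_i\circ\rho(T)=\e(c_i)\ell_i$, the matrix of $\rho(T)$ in the basis $v_i$ is $\operatorname{diag}(\e(c_i))$, so $\chi(T)=\e(\tr\cE)$. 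Because $\chi$ has finite order dividing $12$, we get $\e(12\tr\cE)=1$, that is, $12\tr\cE\in\Z$. As $\nu_\eta^2(T)=\e(1/12)$, this gives $\chi=\nu_\eta^{24\tr\cE}$, which is the character of $\eta^{24\tr\cE}$ in weight $12\tr\cE$. (One should check that the weight parity is consistent, since $\rho(-I)=(-1)^\kappa I$ and $\det\rho(-I)=(-1)^{r\kappa}$, but this is automatic once the characters agree on $S$ and $T$.) Therefore $G=D/\eta^{24\tr\cE}$ satisfies $G|_{K-12\tr\cE}\gamma=G$ for all $\gamma$.

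\emph{Holomorphy.} Since $\eta$ has no zeros on $\Hh$, $G$ is holomorphic on $\Hh$. At the cusp, expand $D$ as a sum over permutations $\sigma$ of $\pm\prod_i \ell_i(F_{\sigma(i)})$. Each factor satisfies $\ell_i(F_{\sigma(i)})=O(q^{c_i})$, so every term is $O(q^{\sum c_i})=O(q^{\tr\cE})$. Dividing by $\eta^{24\tr\cE}=q^{\tr\cE}(1+O(q))$ shows that $G$ has no negative exponents. So $G$ is a holomorphic scalar modular form of level one and weight $K-12\tr\cE$, possibly zero. That weight is an integer because $12\tr\cE\in\Z$.

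The main obstacle is the character identification, specifically confirming $\det\rho=\nu_\eta^{24\tr\cE}$ including on $-I$. We handle it by citing the stated fact that one-dimensional characters are determined by their value on $T$.
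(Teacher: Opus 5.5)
Your proposal is correct and follows essentially the same route as the paper. Both arguments identify the character of the determinant as $\det\rho$, use that characters of $\Gammaone$ are determined by their (twelfth-root-of-unity) value at $T$ to get $12\tr\cE\in\Z$ and $\det\rho=\nu_\eta^{24\tr\cE}$ (which also settles your $-I$ concern), and then use the row bounds $O(q^{c_i})$ to get holomorphy at the cusp after dividing by $\eta^{24\tr\cE}$.
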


\begin{proof}
The determinant transforms with the one-dimensional character $\det\rho$.  Since
\[
\det\rho(T)=\e(\tr\cE),
\]
and every character of $\Gammaone$ takes a twelfth root of unity at $T$, we have $12\tr\cE\in\Z$.  The characters $\det\rho$ and $\nu_\eta^{24\tr\cE}$ agree at $T$, and hence
\[
\det\rho=\nu_\eta^{24\tr\cE}.
\]  The $i$th row of the determinant is $O(q^{c_i})$, so
\[
\det(F_1,\ldots,F_r)=O(q^{\tr\cE}).
\]
Since $\eta$ has no zeros on $\Hh$ and $\eta^{24\tr\cE}=q^{\tr\cE}(1+O(q))$, the quotient is holomorphic on $\Hh$ and at the cusp.  Its character is trivial and its weight is the asserted one.
\end{proof}

Write $F=\sum_{i=1}^m f_i v_i$ in the basis of $V(F)$ dual to the
covectors in $\cE_F$, and put
\[
d=q\frac{d}{dq}=\frac{1}{2\pi i}\frac{d}{d\tau}.
\]
The Wronskian of the component functions is
$\det(d^{j-1}f_i)_{1\le i,j\le m}$.
Since $\DD^j$ differs from $d^j$ by a linear combination of
lower-order derivatives, column operations give
\[
\det(d^{j-1}f_i)_{1\le i,j\le m}
=\det(F,\DD F,\ldots,\DD^{m-1}F).
\]
We use the normalized Wronskian
\[
W(F):=
\frac{\det(d^{j-1}f_i)_{1\le i,j\le m}}
{\eta^{24\tr\cE_F}}.
\]
By Proposition~\ref{prop:derivative-independent} and
Lemma~\ref{lem:determinant}, it is a nonzero scalar modular form of weight
\begin{equation}\label{eq:wronskian-weight}
\wt W(F)=m\,\wt(F)+m(m-1)-12\tr\cE_F.
\end{equation}

\subsection{The free-module theorem}

In this subsection, we prove freeness for prescribed growth at the cusp.
The standard holomorphic case is proved in \cite[Theorem~1]{MM10}.
A geometric proof, also valid for nonstandard exponent choices, is given in
\cite[Theorem~5.5 and Remark~5.6]{CF17}.
We adapt the divisibility argument of \cite[Section~3]{MM10} to the
prescribed cusp conditions. For convenience, we present the full proof below.

\begin{theorem}\label{thm:freeness}
Let $(\rho,V)$ and $\cE$ be as above, with $\cE$ complete and $r=\dim V$.  Then
\[
M(\rho;\cE)=\bigoplus_{k\equiv\kappa\, (2)}M_k(\rho;\cE)
\]
is a free $\M$-module of rank $r$.
\end{theorem}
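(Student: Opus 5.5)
We follow the divisibility argument of \cite[Section~3]{MM10}, adapted to the prescribed cusp conditions. The proof has three steps: establish that $M(\rho;\cE)$ is finitely generated and torsion free of rank $r$, choose a minimal-weight-sum family, and then show by a divisibility argument that this family spans.

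\textbf{Step 1: finiteness and rank.} Weights are bounded below: if $F\in M_k(\rho;\cE)$ is nonzero, then by \eqref{eq:wronskian-weight} the nonzero form $W(F)$ has weight $\ge0$. Since $m\le r$ and the numbers $c_i$ are fixed, this gives $k\ge -(r-1)+12\min\tr\cE_F/m$. Only finitely many reduced systems $\cE_F$ can occur, so $k$ is bounded below uniformly.

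Rank at least $r$: multiplying by a power of $\Delta$ passes to holomorphic forms, $\Delta^NM(\rho;\cE)\subset M(\rho)$. Conversely $M(\rho)\subset M(\rho;\cE)$ after multiplication by $\Delta^{N'}$. Vector-valued Poincar\'e series \cite{KM03,KM04} show that $M(\rho)$ contains $r$ forms with nonzero determinant. Alternatively, decompose $V$ into indecomposable pieces and apply Proposition~\ref{prop:derivative-independent} to a form with maximal value span.

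Rank at most $r$: any $r+1$ forms are linearly dependent over the field of meromorphic functions, and Cramer's rule turns this into an $\M$-relation. Finite generation follows because $\Delta^N M(\rho;\cE)$ is a submodule of the Noetherian module $M(\rho)$, and multiplication by $\Delta^N$ is injective.

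\textbf{Step 2: choice of candidate basis.} Among all $\M$-linearly independent homogeneous families $F_1,\ldots,F_r\in M(\rho;\cE)$, choose one minimizing $\sum_j\wt(F_j)$. This sum is bounded below: by Lemma~\ref{lem:determinant}, $\det(F_1,\ldots,F_r)/\eta^{24\tr\cE}$ is a nonzero element of $M_{\sum k_i-12\tr\cE}$, so $\sum k_i\ge12\tr\cE$.

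\textbf{Step 3: spanning (the main obstacle).} Let $G\in M_k(\rho;\cE)$ be homogeneous. By Cramer's rule, $D\,G=\sum_j D_jF_j$, where $D=\det(F_1,\ldots,F_r)$ and $D_j$ is $D$ with the $j$-th column replaced by $G$. Dividing by $\eta^{24\tr\cE}$ and using Lemma~\ref{lem:determinant}, both $\delta=D/\eta^{24\tr\cE}$ and $\delta_j=D_j/\eta^{24\tr\cE}$ lie in $\M$, and
\[
\delta\, G=\sum_j \delta_jF_j.
\]

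We must show that $\delta$ is a nonzero constant. Suppose instead that $\delta$ has positive weight. Write $\delta=\prod(\text{irreducible factors})$ in $\M=\C[E_4,E_6]$, a UFD whose irreducible homogeneous elements are $E_4$, $E_6$, and $E_4^3-\alpha E_6^2$ (up to scalars), each vanishing at some point $\tau_0\in\Hh\cup\{\infty\}$. Take an irreducible factor $\pi\mid\delta$ vanishing at $\tau_0$.

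If $\tau_0\in\Hh$: the values $F_j(\tau_0)$ are linearly dependent, say $\sum a_jF_j(\tau_0)=0$ with constants $a_j$. Among the indices with $a_j\ne0$, choose $j_0$ of maximal weight, and form the homogeneous combination
\[
H=\sum a_j\, h_jF_j, \qquad h_j\in\M,
\]
where the $h_j$ are chosen so that $H$ is homogeneous and vanishes on the $\SL_2(\Z)$-orbit of $\tau_0$ (using powers of $E_4$ or $E_6$ when $\tau_0$ is elliptic). Then $H$ is divisible by $\pi$ in the module sense, that is, $H/\pi$ is holomorphic and lies in $M(\rho;\cE)$. Replacing $F_{j_0}$ by $H/\pi$ keeps linear independence and strictly lowers the weight sum, contradicting minimality.

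If $\tau_0=\infty$ (the factor $\Delta$): a nontrivial combination has leading exponents exceeding the prescribed $c_i$ by at least $1$. Dividing by $\Delta$ preserves the conditions $\ell_i(F)=O(q^{c_i})$, and the same replacement gives the contradiction.

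Hence $\delta$ is constant and $G\in\sum\M F_j$. The delicate points are arranging homogeneity of the combination $H$ and checking that division by $\pi$ preserves the cusp conditions; this is where the exponent data $\cE$ enter.
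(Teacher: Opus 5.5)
Your route differs from the paper's. The paper lifts a $\C$-basis of $M_k(\rho;\cE)/(E_4M_{k-4}(\rho;\cE)+E_6M_{k-6}(\rho;\cE))$ in each weight and proves $\M$-independence by a minimal-relation argument, which uses only that $E_4,E_6$ have no common zero and have constant term $1$; it gets the rank from dimension growth plus Poincar\'e series. Your descent on the weight sum of a family with nonzero determinant can be made to work, and would give Theorem~\ref{thm:basis-criterion} at the same time. However, Step~3 as written fails exactly at the point you call delicate.

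To get $H(\tau_0)=0$ from $\sum_ja_jF_j(\tau_0)=0$ you need $h_j\in M_{k_{j_0}-k_j}$ with $h_j(\tau_0)\ne0$. Such $h_j$ do not exist in three situations:
\begin{enumerate}[label=\textup{(\alph*)}]
\item when $k_{j_0}-k_j=2$, since $M_2=0$;
\item at $\tau_0=i$ when $k_{j_0}-k_j\equiv2\pmod 4$, since every such form is divisible by $E_6$;
\item at $\tau_0=e^{2\pi i/3}$ when $k_{j_0}-k_j\not\equiv0\pmod 6$, since every such form is divisible by $E_4$.
\end{enumerate}
Powers of $E_4$ or $E_6$ only reach weights divisible by $4$ or $6$. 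Raising the common weight $K$ cannot repair the elliptic cases. You would need $K\equiv k_j$ modulo $4$ (resp.\ $6$) for every involved $j$, which is impossible when these weights are mixed. You would also need $K<k_{j_0}+\wt(\pi)$, with $\wt(\pi)=6$ (resp.\ $4$), for the weight sum to drop. Your cusp case has the same weight-$2$ problem.

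The missing idea is the local symmetry at elliptic points. Suppose $\gamma=\begin{psmallmatrix}a&b\\c&d\end{psmallmatrix}$ fixes $\tau_0$ and $\epsilon=c\tau_0+d$. The transformation law at $\tau_0$ gives $\rho(\gamma)F(\tau_0)=\epsilon^{-k}F(\tau_0)$ for $F$ of weight $k$; take $\gamma=S$, $\epsilon=i$ at $i$, and $\gamma=ST$, $\epsilon=e^{\pi i/3}$ at $e^{2\pi i/3}$. So values of forms whose weights differ modulo $4$ (resp.\ $6$) lie in different eigenspaces. Hence the dependence splits into dependences among the $F_j$ with weights in a single class. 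Choose a nontrivial one, take $K=k_{j_0}$, and let the $h_j$ be suitably normalized powers of $E_4$ (resp.\ $E_6$). Then $H/E_6$ (resp.\ $H/E_4$) has weight $k_{j_0}-6$ (resp.\ $k_{j_0}-4$).

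At nonelliptic $\tau_0\in\Hh$ and at the cusp, take $K=k_{j_0}+4$, so that every $K-k_j\ge4$. Use monomials in $E_4,E_6$ not vanishing at $\tau_0$, resp.\ Eisenstein series with constant term $1$ as in the proof of Theorem~\ref{thm:basis-criterion}. Since $\wt(\pi)=12$, the weight still drops by $8$. The new determinant is $a_{j_0}h_{j_0}\det(F_1,\ldots,F_r)/\pi\not\equiv0$, so independence is preserved.

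Two smaller repairs are also needed:
\begin{enumerate}[label=\textup{(\roman*)}]
\item Minimize over families with $\det\not\equiv0$, not over $\M$-independent ones. Lemma~\ref{lem:determinant} does not assert nonvanishing, and the equivalence in Remark~\ref{rem:determinant-independence} is derived from the theorem itself.
\item The Noetherian argument in Step~1 presupposes that $M(\rho)$ is finitely generated, which is part of what is being proved. Your descent makes that step and the rank upper bound unnecessary, so you can drop them.
\end{enumerate}
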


\begin{proof}
It is enough to treat $\rho(-I)=I$.  Indeed, if $\rho(-I)=-I$, division by $\eta^2$ changes odd weights to even weights and replaces $\rho$ by $\rho\otimes\nu_\eta^{-2}$ while shifting every prescribed lower bound on Fourier exponents by $-1/12$.  Multiplication by $\eta^2$ recovers the original module.

\medskip\noindent\textup{(1)} We prove that $M_k(\rho;\cE)=0$ for all sufficiently small $k$ and that each graded piece is finite-dimensional.  Let
\[
c_{\min}=\min_i c_i.
\]
For $0\ne F\in M_k(\rho;\cE)$, every lower bound in $\cE_F$ is at least $c_{\min}$.  The nonzero modular Wronskian and \eqref{eq:wronskian-weight} therefore give
\begin{equation}\label{eq:lower-bound-freeness}
k\ge12c_{\min}-\dim V(F)+1\ge12c_{\min}-(r-1).
\end{equation}
Thus $M(\rho;\cE)$ is bounded below.  Each graded piece is finite-dimensional: otherwise, for fixed $k$ one could find, for every $N$, a nonzero $F\in M_k(\rho;\cE)$ vanishing to order at least $N$ at $i$.  Since $E_6$ has a simple zero at the $\Gammaone$-orbit of $i$ and $E_6=1+O(q)$ at the cusp, the quotient $F/E_6^N$ would lie in $M_{k-6N}(\rho;\cE)$, contradicting \eqref{eq:lower-bound-freeness} for large $N$.

\medskip\noindent\textup{(2)} We prove freeness.  In each admissible weight $k$, choose homogeneous lifts $\{F_i\}$ of a $\C$-basis of
\begin{equation}\label{eq:indecomposable-quotient}
M_k(\rho;\cE)\big/
\bigl(E_4M_{k-4}(\rho;\cE)+E_6M_{k-6}(\rho;\cE)\bigr).
\end{equation}
Since the module is bounded below, these lifts generate $M(\rho;\cE)$ by induction on the weight.

They are also $\M$-linearly independent.  Suppose that there is a nonzero homogeneous relation of least total weight $K$,
\[
\sum_j g_jF_j=0.
\]
By the choice of the lifts in \eqref{eq:indecomposable-quotient}, no coefficient of weight $0$ can occur in such a minimal relation.  Hence every nonzero $g_j$ lies in $(E_4,E_6)$; write
\[
g_j=E_4a_j+E_6b_j.
\]
With
\[
A=\sum_j a_jF_j,\qquad B=\sum_j b_jF_j,
\]
we have $E_4A=-E_6B$.  Since $E_4$ and $E_6$ have no common zero on $\Hh$, the common quotient
\[
C:=\frac{A}{E_6}=-\frac{B}{E_4}
\]
is holomorphic on $\Hh$; moreover $E_4,E_6=1+O(q)$, so $C$ satisfies the same cusp conditions.  Its weight is $K-10<K$, and therefore the already constructed generators of smaller weight give
\[
C=\sum_j c_jF_j,
\qquad c_j\in\M.
\]
It follows that
\[
\sum_j(a_j-E_6c_j)F_j=0,
\qquad
\sum_j(b_j+E_4c_j)F_j=0.
\]
Both relations have smaller total weight, so minimality gives
\[
a_j=E_6c_j,\qquad b_j=-E_4c_j,
\]
and hence $g_j=0$ for every $j$, a contradiction.  Thus the chosen generators form a free homogeneous generating set.

\medskip\noindent\textup{(3)} We prove that the rank is $r$.  Let $k_0$ be a lower bound for the nonzero graded pieces.  Given large $k$, choose $N\ge0$ such that
\[
k_0-12\le k-12N<k_0.
\]
Multiplication by $\Delta^N$ gives
\[
M_k(\rho;\cE)=\Delta^N M_{k-12N}(\rho;\cE-N),
\qquad
\cE-N=\{(\ell_i,c_i-N)\}.
\]
Since $M_{k-12N}(\rho;\cE)=0$, the $Nr$ Fourier coefficients between the bounds $c_i-N$ and $c_i$ define an injection
\[
M_{k-12N}(\rho;\cE-N)\hookrightarrow\C^{Nr}.
\]
Consequently
\[
\dim M_k(\rho;\cE)\le Nr=\frac r{12}k+O(1).
\]
For any $s$ distinct members $F_1,\ldots,F_s$ of this free generating set, their scalar multiples give
\[
\dim M_k(\rho;\cE)\ge
\sum_{j=1}^s\dim M_{k-\wt(F_j)}
=\frac s{12}k+O(1)
\]
along the admissible parity.  Hence $s\le r$.  In particular, the free generating set is finite and has at most $r$ elements.

There are at least $r$ independent forms.  Choose a $T$-eigenbasis $v_1,\ldots,v_r$ and representatives $\alpha_i\in(0,1]$ with
\[
\rho(T)v_i=\e(\alpha_i)v_i.
\]
Put $\Gamma_\infty=\langle T,-I\rangle$.  For sufficiently large even $k$, consider the Poincar\'e series
\[
P_i(\tau)=\sum_{\gamma\in\Gamma_\infty\backslash\Gammaone}
 (c_\gamma\tau+d_\gamma)^{-k}\rho(\gamma)^{-1}
 \e\bigl((\alpha_i-1)\gamma\tau\bigr)v_i,
\qquad
\gamma=\begin{pmatrix}a_\gamma&b_\gamma\\c_\gamma&d_\gamma\end{pmatrix}.
\]
The convergence theorem and the Fourier expansion in
\cite[Theorems~3.3 and~3.10, equation~(41)]{KMlog} apply: here $\rho(T)$ is diagonal, so all Jordan blocks have size one and the logarithmic correction matrix in that source is the identity.  Choose its integer parameters to obtain exponents $\alpha_i-1$ in the diagonal exponential factors.  The series converge absolutely and locally uniformly on $\Hh$, have weight $k$ for $\rho$, and satisfy
\[
(P_i)_j=\delta_{ij}q^{\alpha_i-1}+q^{\alpha_j}O(1).
\]
Factoring $q^{\alpha_j-1}$ from row $j$ gives
\[
\det(P_1,\ldots,P_r)
=q^{\scriptsum_{j=1}^r\alpha_j-r}\det(I_r+O(q))
=q^{\scriptsum_{j=1}^r\alpha_j-r}(1+O(q)).
\]
Thus $P_1,\ldots,P_r$ are $\M$-linearly independent.  After multiplication by one sufficiently large common power of $\Delta$, all of them satisfy the prescribed growth conditions $\cE$, and their independence is unchanged.  Therefore the rank is at least $r$.

The free generating set consequently has exactly $r$ elements.
\end{proof}

\begin{remark}\label{rem:determinant-independence}
The proof also shows that every homogeneous basis has a nonzero determinant.
Indeed, express the $r$ forms constructed from the Poincar\'e series in such
a basis. Their determinant is the product of the basis determinant and a
scalar coefficient determinant, and is nonzero.
Consequently, $r$ homogeneous forms in $M(\rho;\cE)$ are $\M$-linearly
independent if and only if their determinant is not identically zero.
To see this, write the forms in a homogeneous basis, work over the fraction
field of $\M$, and clear denominators.
\end{remark}

For general restrictions on generator weights and their multiplicities,
see \cite{FM18}.

\subsection{A basis criterion and tensor products}

We give a criterion for a homogeneous free basis in terms of the sum
of its weights and apply it to tensor products.
The corresponding generator-weight sum and basis-determinant formula
are stated by Gannon
\cite[Theorem~3.4(b)]{Gan14}.
Here we give a different proof, based on determinant identities and
the Serre derivative, adapted to the prescribed cusp conditions
considered in this paper.
These identities yield the weight-sum criterion for a homogeneous
free basis.
The determinant argument uses the following fact about the Serre
derivative of scalar modular forms.

%We give a criterion for a homogeneous free basis in terms of the sum
%of its weights and apply it to tensor products. The determinant argument
%uses the following fact about the Serre derivative of scalar modular forms.

\begin{lemma}\label{lem:kernel-serre}
Let $0\ne f\in M_w$ be a scalar modular form.  Then $\mathcal D_wf=0$ if and only if
\[
f=c\Delta^m
\]
for some $c\in\C^\times$ and $m\ge0$.
\end{lemma}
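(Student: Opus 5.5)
The plan is to use the observation just before Proposition~\ref{prop:derivative-independent}, specialized to the trivial representation. For the ``if'' direction, $\Delta=\eta^{24}$, so $c\Delta^m=c\,\eta^{2w}$ with $w=12m$. We have $q\,d\log\eta/dq=E_2/24$, which gives $q\,d(\eta^{2w})/dq=\frac{w}{12}E_2\eta^{2w}$. Hence $\DD_w(c\Delta^m)=0$. This is a direct computation.

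For the ``only if'' direction, suppose $\DD_wf=0$ with $f\neq0$. The equivalence recorded before Proposition~\ref{prop:derivative-independent} gives $\frac{d}{d\tau}(\eta^{-2w}f)=0$, and therefore $f=c\,\eta^{2w}$ for some constant $c\in\C^\times$. That same remark says the character of $f$, which is trivial since $f$ is a scalar modular form, must equal $\nu_\eta^{2w}$. A character of $\Gammaone$ is determined by its value at $T$, and $\nu_\eta^{2w}(T)=\e(w/12)$. So triviality forces $12\mid w$; write $w=12m$. Then $f=c\,\eta^{24m}=c\Delta^m$.

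The remaining point is the sign of $m$. Since $f\in M_w$ is holomorphic at the cusp, its expansion $c\,q^{m}(1+O(q))$ has no negative exponents, so $m\ge0$. Alternatively, $M_w=0$ for $w<0$ gives the same conclusion.

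No step is a real obstacle. The one point needing care is deducing $12\mid w$ from the character comparison, and this uses only that characters are determined by their value at $T$, as stated earlier in the section.
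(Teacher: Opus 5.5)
Your proposal is correct and follows essentially the same route as the paper: both use $\DD_wf=0\iff f=c\,\eta^{2w}$, force $12\mid w$ from $\nu_\eta^{2w}(T)=\e(w/12)=1$, and get $m\ge0$ from holomorphy at the cusp. The only cosmetic difference is in the converse, where the paper cites $\DD_{12}\Delta=0$ and the Leibniz rule rather than differentiating $\eta^{2w}$ directly. Also, you only need that the trivial character takes the value $1$ at $T$, not that characters are determined by their value there.
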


\begin{proof}
For every integer $w$,
\[
\DD_wf=0\quad\Longleftrightarrow\quad f=c\eta^{2w}
\]
by the derivative identity above.  If $c\ne0$ and $f$ has trivial character, then $\nu_\eta^{2w}(T)=\e(w/12)=1$, so $w=12m$ for an integer $m$.  Holomorphy at the cusp gives $m\ge0$.  Conversely, $\DD_{12}\Delta=0$ and the Leibniz rule show that every $c\Delta^m$ lies in the kernel.
\end{proof}

\begin{theorem}\label{thm:basis-criterion}
Let $F_1,\ldots,F_r\in M(\rho;\cE)$ be $\M$-linearly independent and homogeneous.  Then
\[
\{F_1,\ldots,F_r\}\text{ is an }\M\text{-basis}
\quad\Longleftrightarrow\quad
\sum_{i=1}^r\wt(F_i)=12\tr\cE.
\]
Equivalently,
\[
\frac{\det(F_1,\ldots,F_r)}{\eta^{24\tr\cE}}\in\C^\times.
\]
\end{theorem}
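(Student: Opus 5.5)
The plan is to compare the given family with a fixed homogeneous basis through determinants. By Theorem~\ref{thm:freeness}, $M(\rho;\cE)$ has a homogeneous $\M$-basis $G_1,\ldots,G_r$; by Remark~\ref{rem:determinant-independence} we have $\det(G_1,\ldots,G_r)\not\equiv0$. The first step is to prove the key fact
\[
h_0:=\frac{\det(G_1,\ldots,G_r)}{\eta^{24\tr\cE}}\in\C^\times,
\qquad\text{so that}\qquad \sum_i\wt(G_i)=12\tr\cE .
\]
Granting this, the theorem follows. Write $F_j=\sum_i a_{ij}G_i$ with homogeneous $a_{ij}\in\M$. Then $\det(F_1,\ldots,F_r)=\det(a_{ij})\det(G_1,\ldots,G_r)$, and $\det(a_{ij})$ is a nonzero element of $M_{\sum\wt F_j-\sum\wt G_i}$. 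Since $M_w=0$ for $w<0$ and $M_0=\C$, we get $\sum\wt(F_j)\ge12\tr\cE$. Equality holds iff $\det(a_{ij})\in\C^\times$, iff $(a_{ij})$ is invertible over $\M$, iff the $F_j$ form a basis. The same computation shows that equality is equivalent to the normalized determinant of the $F_j$ being a nonzero constant.

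To prove the key fact, Lemma~\ref{lem:determinant} gives $h_0\in M_w$ with $w=\sum\wt(G_i)-12\tr\cE\ge0$, and $h_0\neq0$. I would show $\DD_w h_0=0$; then Lemma~\ref{lem:kernel-serre} gives $h_0=c\Delta^m$. For this, note that $\eta^{24\tr\cE}$ is killed by the Serre derivative of its weight, so by Leibniz
\[
\DD h_0=\frac{\DD\det(G_1,\ldots,G_r)}{\eta^{24\tr\cE}},
\]
and $\DD\det(G)$ is the sum over $i$ of the determinants in which $G_i$ is replaced by $\DD G_i$. Each $\DD G_i$ lies in $M(\rho;\cE)$, so $\DD G_i=\sum_j b_{ji}G_j$ with $b_{ji}\in\M$ homogeneous of weight $\wt G_i+2-\wt G_j$. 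Then
\[
\DD\det(G)=\Bigl(\sum_i b_{ii}\Bigr)\det(G).
\]
Each $b_{ii}$ has weight $2$, and $M_2=0$, so $\DD\det(G)=0$ and hence $\DD h_0=0$.

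It remains to exclude $m>0$. If $h_0=c\Delta^m$ with $m\ge1$, then $\det(G_1,\ldots,G_r)=O(q^{\tr\cE+1})$, and I would derive a contradiction with the basis property. This is the main obstacle. One approach: the leading $q^{\tr\cE}$-coefficient matrix of the rows then has vanishing determinant over $\C$. Take the leading $c_i$-coefficients of $\ell_i(G_j)$, and choose a constant combination (after twisting by weight-appropriate powers of $E_4,E_6$, or by working weight by weight) that kills the leading term in some direction. This produces an element divisible by $\Delta$ in a way not accounted for by the basis.

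A cleaner alternative is to compare with the Poincaré-series forms $P_i$ built in the proof of Theorem~\ref{thm:freeness}, multiplied by a common $\Delta^N$. Their determinant equals $q^{\sum\alpha_j-r}(1+O(q))$ times $\Delta^{Nr}$, and adjusting $N$ and choosing exponents exactly matching $\cE$ gives a family whose normalized determinant is $\Delta^{Nr'}$ times a unit up to known factors. Writing this family in the basis $G$ gives $\Delta^{m}\mid\Delta^{\text{known}}$.

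Failing that, I would fall back on a dimension count. Using the basis, $\dim M_k(\rho;\cE)=\sum_i\dim M_{k-\wt G_i}$, and the average growth of this is $(rk-\sum\wt G_i)/12$ up to bounded oscillation. Then compute $\dim M_k(\rho;\cE)$ for large $k$ directly. Via $\Delta^N$ and Riemann–Roch, or an exact injection-plus-surjectivity argument refining step (3) of Theorem~\ref{thm:freeness}, $\dim M_k$ equals $(rk-12\tr\cE)/12+O(1)$ with the correct constant. Matching the constant terms forces $\sum\wt G_i=12\tr\cE$, hence $m=0$.
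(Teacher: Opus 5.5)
You correctly reduce to a single homogeneous basis via the transition matrix, which gives both directions and the inequality. Your identity $\DD\det(G)=\bigl(\sum_ib_{ii}\bigr)\det(G)$ with $b_{ii}\in M_2=0$ is a direct version of the paper's contradiction argument for $\DD h=0$. The gap is the step you flag yourself: excluding $m\ge1$. None of your three sketches carries it out.

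Your first sketch is the right idea, and it is the paper's argument, but the decisive points are missing.
\begin{itemize}
\item Working ``weight by weight'' fails. A kernel vector of the leading-coefficient matrix can mix basis elements of different weights, and a non-homogeneous combination cannot be divided by $\Delta$ inside the graded module.
\item So you must first multiply each $G_j$ by some $A_j\in M_{K-\wt(G_j)}$ with constant term $1$, choosing $K$ so that no difference equals $2$. This does not change the matrix $\bigl(q^{-c_i}\ell_i(A_jG_j)\bigr)_{i,j}$ at $q=0$. That matrix is singular because $c\Delta^m\prod_jA_j$ vanishes at $q=0$.
\item A kernel vector $(\lambda_j)$ then kills the leading coefficient in \emph{every} coordinate at once. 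Hence $G=\sum_j\lambda_jA_jG_j$ satisfies $\ell_i(G)=O(q^{c_i+1})$ for all $i$, and $G/\Delta\in M_{K-12}(\rho;\cE)$.
\item The contradiction you leave implicit: expand $G/\Delta=\sum_jb_jG_j$, multiply by $\Delta$, and use uniqueness of coefficients. This gives $\lambda_jA_j=\Delta b_j$ for every $j$, which is impossible when $\lambda_j\ne0$ because $A_j$ has constant term $1$.
\end{itemize}

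The other two sketches also do not close the gap as written.

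In the Poincar\'e comparison, writing a family $Q$ in the basis gives $\det(Q)/\eta^{24\tr\cE}=c\Delta^m\det(C)$ with $\det(C)\in\M$. A conclusion of the form $\Delta^m\mid\Delta^{\text{known}}$ therefore only bounds $m$ from above. The route does work if you instead aim for a family whose normalized determinant has nonzero constant term. Take $Q_j=\Delta^NP_j$, where $P_j$ is a Poincar\'e series of large weight whose expansion is $q^{c_j-N}v_j$ plus terms of exponent at least $\alpha_i$ in the $i$th coordinate. Here $\alpha_i\in(0,1]$ with $\alpha_i\equiv c_i \bmod 1$, and $N$ is large enough that $N+\alpha_i>c_i$ for all $i$. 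Then $q^{-c_i}\ell_i(Q_j)=\delta_{ij}+O(q)$, so $c\Delta^m\det(C)=1+O(q)$ and $m=0$. Note that this normalized determinant is a form with constant term $1$, not a power of $\Delta$ times a unit as you wrote.

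The dimension count needs the exact average constant term of $\dim M_k(\rho;\cE)$. An $O(1)$ error cannot separate $\sum_i\wt(G_i)$ from $12\tr\cE$. Getting that constant requires a Riemann--Roch or Serre-duality dimension formula that you do not derive; step~(3) of the freeness proof gives only an upper bound.
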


\begin{proof}
Set
\[
h=\frac{\det(F_1,\ldots,F_r)}{\eta^{24\tr\cE}}.
\]
By Lemma~\ref{lem:determinant}, $h$ is a nonzero scalar modular form of weight
\[
\sum_i\wt(F_i)-12\tr\cE.
\]
In particular, $\sum_i\wt(F_i)\ge12\tr\cE$.
Assume that $F_1,\ldots,F_r$ form a basis.  We show that $h$ is constant.

Assume first that $\mathcal Dh\ne0$.  Since $\mathcal D\eta^{24\tr\cE}=0$, the Leibniz rule for determinants gives
\[
\mathcal D\det(F_1,\ldots,F_r)
=
\sum_{j=1}^r
\det(F_1,\ldots,\mathcal DF_j,\ldots,F_r)
=
\eta^{24\tr\cE}\mathcal Dh.
\]
Hence at least one determinant on the right is nonzero.  For this index $j$, the family obtained by replacing $F_j$ with $\mathcal DF_j$ is $\M$-linearly independent.  Expanding $\mathcal DF_j$ in the given basis, we obtain
\[
\mathcal DF_j=\sum_i a_iF_i,
\qquad
 a_i\in M_{\wt(F_j)+2-\wt(F_i)}.
\]
The coefficient of $F_j$ has weight $2$ and is therefore zero.  Thus $\mathcal DF_j$ belongs to the span of the other $F_i$, contradicting the nonzero determinant.

Thus $\mathcal Dh=0$.  By Lemma~\ref{lem:kernel-serre},
\[
h=c\Delta^m.
\]
Suppose $m>0$.  Choose an integer $K$ of the correct parity, sufficiently large that for every $j$ there is a scalar modular form $A_j\in M_{K-\wt(F_j)}$ with constant term $1$.  We may take $A_j=1$ when the weight is $0$ and a normalized Eisenstein series otherwise; choose $K$ so that no difference is $2$.  The forms
\[
G_j=A_jF_j
\]
have the same weight $K$.  Write $F_j=\sum_i f_{ij}v_i$ in the basis dual to the exponent covectors.  Since
\[
\frac{\det(G_1,\ldots,G_r)}{\eta^{24\tr\cE}}
=c\Delta^m\prod_jA_j
\]
vanishes at $q=0$, so does $q^{-\tr\cE}\det(G_1,\ldots,G_r)$.  Hence the matrix of constant terms of the normalized Fourier series
\[
\bigl(q^{-c_i}\ell_i(G_j)\bigr)_{i,j}
\]
is singular.  Hence there are constants $\lambda_j$, not all zero, such that
\[
G:=\sum_j\lambda_jG_j
\]
satisfies every cusp condition one order more strongly:
\[
\ell_i(G)=O(q^{c_i+1})
\qquad(1\le i\le r).
\]
Therefore $G/\Delta\in M_{K-12}(\rho;\cE)$.  Since $F_1,\ldots,F_r$ form a basis, write
\[
\frac{G}{\Delta}=\sum_j b_jF_j,
\qquad b_j\in\M.
\]
Multiplication by $\Delta$ and uniqueness of the expansion in the basis $F_j$ give
\[
\lambda_jA_j=\Delta b_j
\]
for every $j$.  For an index with $\lambda_j\ne0$, this is impossible since $A_j$ has constant term $1$.  Thus $m=0$, and $h$ is constant.

Conversely, suppose $h=c\in\C^\times$.  Let $F\in M_k(\rho;\cE)$.  Cramer's rule gives
\[
F=\sum_i a_iF_i,
\qquad
 a_i=
\frac{\det(F_1,\ldots,F_{i-1},F,F_{i+1},\ldots,F_r)}
{\det(F_1,\ldots,F_r)}.
\]
By Lemma~\ref{lem:determinant}, the numerator divided by $\eta^{24\tr\cE}$ is a scalar modular form of weight $k-\wt(F_i)$, while the denominator divided by the same eta-power is the nonzero constant $c$.  Hence $a_i\in M_{k-\wt(F_i)}\subset\M$.  Therefore $F_1,\ldots,F_r$ is a basis.

A nonzero holomorphic scalar modular form has weight zero exactly when it is constant.  Hence the determinant condition is equivalent to the asserted equality of weight sums.
\end{proof}

The basis criterion also gives the following tensor-product statement.  Let $\cE_a=\{(\ell_i^{(a)},c_i^{(a)}):1\le i\le r_a\}$ be complete systems for $(\rho_a,V_a)$, where $r_a=\dim V_a$ and $a=1,2$.  Define
\[
\cE_1\boxtimes\cE_2
=
\{(\ell_i^{(1)}\otimes\ell_j^{(2)},c_i^{(1)}+c_j^{(2)}):1\le i\le r_1,\ 1\le j\le r_2\}.
\]

\begin{corollary}\label{cor:tensor}
The map induced by the pointwise tensor product $(F\otimes G)(\tau)=F(\tau)\otimes G(\tau)$,
\[
M(\rho_1;\cE_1)\otimes_{\M}M(\rho_2;\cE_2)
\longrightarrow
M(\rho_1\otimes\rho_2;\cE_1\boxtimes\cE_2)
\]
is an isomorphism.
\end{corollary}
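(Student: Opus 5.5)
Take homogeneous bases $F_1,\ldots,F_{r_1}$ of $M(\rho_1;\cE_1)$ and $G_1,\ldots,G_{r_2}$ of $M(\rho_2;\cE_2)$; they exist by Theorem~\ref{thm:freeness}. The source module is then free over $\M$ with basis $F_i\otimes G_j$. The map sends this basis to the pointwise tensor products $F_i\otimes G_j$. Hence the plan is to show that these $r_1r_2$ forms constitute an $\M$-basis of $M(\rho_1\otimes\rho_2;\cE_1\boxtimes\cE_2)$. Theorem~\ref{thm:basis-criterion} together with Remark~\ref{rem:determinant-independence} reduces this to two checks: the forms lie in the target module with a nonvanishing determinant, and their weights add up correctly.

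Membership comes first. Since $\rho_1\otimes\rho_2$ acts on $V_1\otimes V_2$, the product $F_i\otimes G_j$ has weight $\wt F_i+\wt G_j$. Also $(\ell^{(1)}_a\otimes\ell^{(2)}_b)(F_i\otimes G_j)=\ell^{(1)}_a(F_i)\,\ell^{(2)}_b(G_j)=O(q^{c^{(1)}_a+c^{(2)}_b})$. The covectors $\ell^{(1)}_a\otimes\ell^{(2)}_b$ form a basis of $(V_1\otimes V_2)^\dual$ and are $T$-eigenvectors with eigenvalue $\e(c_a^{(1)}+c_b^{(2)})$, so $\cE_1\boxtimes\cE_2$ is a complete system. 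For the determinant, use the dual bases. The matrix of the family $(F_i\otimes G_j)$ is the Kronecker product of the $r_1\times r_1$ matrix $(\ell_a^{(1)}(F_i))$ and the $r_2\times r_2$ matrix $(\ell_b^{(2)}(G_j))$. The Kronecker determinant formula then gives
\[
\det(F_i\otimes G_j)=\det(F_1,\ldots,F_{r_1})^{r_2}\,\det(G_1,\ldots,G_{r_2})^{r_1}.
\]
Each factor is nonzero by Remark~\ref{rem:determinant-independence}, so the family is $\M$-linearly independent.

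Next compare weights. By Theorem~\ref{thm:basis-criterion}, $\sum_i\wt F_i=12\tr\cE_1$ and $\sum_j\wt G_j=12\tr\cE_2$. Therefore
\[
\sum_{i,j}(\wt F_i+\wt G_j)=r_2\cdot12\tr\cE_1+r_1\cdot12\tr\cE_2=12\tr(\cE_1\boxtimes\cE_2),
\]
where the last equality holds because $\tr(\cE_1\boxtimes\cE_2)=\sum_{a,b}(c_a^{(1)}+c_b^{(2)})=r_2\tr\cE_1+r_1\tr\cE_2$. Theorem~\ref{thm:basis-criterion} now shows that the images form a basis. The map carries a basis to a basis, so it is an isomorphism. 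Equivalently, the normalized determinant is the product of two nonzero constants, by the determinant form of the criterion.

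Parity needs a separate check. If $\rho_a(-I)=(-1)^{\kappa_a}$, then $\rho_1\otimes\rho_2(-I)=(-1)^{\kappa_1+\kappa_2}$, which matches the weight parities; in the general case one argues componentwise over the $(-I)$-eigenspaces. I expect this to be the only real obstacle, alongside the care needed to confirm that the Kronecker determinant identity and the orders of the factors are stated in compatible bases. After that, the argument is purely a matter of bookkeeping on top of Theorem~\ref{thm:basis-criterion}.
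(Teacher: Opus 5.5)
Your proposal is correct and matches the paper's proof: independence comes from the Kronecker determinant $(\det A_F)^{r_2}(\det A_G)^{r_1}\not\equiv0$, the weight sum equals $r_2\cdot12\tr\cE_1+r_1\cdot12\tr\cE_2=12\tr(\cE_1\boxtimes\cE_2)$, and Theorem~\ref{thm:basis-criterion} then shows the images form a basis. The parity point you flag is not a real obstacle, since under the standing assumption $\rho_a(-I)=(-1)^{\kappa_a}I$ the weights of $F_i\otimes G_j$ automatically have the parity $\kappa_1+\kappa_2$ required for $\rho_1\otimes\rho_2$.
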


\begin{proof}
Let $F_1,\ldots,F_{r_1}$ and $G_1,\ldots,G_{r_2}$ be bases.  Write $A_F(\tau)$ and $A_G(\tau)$ for their matrices of component functions.  The component matrix of the products $F_i\otimes G_j$ is $A_F\otimes A_G$, and
\[
\det(A_F\otimes A_G)=(\det A_F)^{r_2}(\det A_G)^{r_1}\not\equiv0.
\]
Thus these $r_1r_2$ forms are independent.  Moreover,
\begin{align*}
\sum_{i,j}\wt(F_i\otimes G_j)
&=r_2\sum_i\wt(F_i)+r_1\sum_j\wt(G_j)\\
&=12r_2\tr\cE_1+12r_1\tr\cE_2\\
&=12\tr(\cE_1\boxtimes\cE_2).
\end{align*}
Theorem~\ref{thm:basis-criterion} finishes the proof.
\end{proof}

\section{Generator weights in low dimensions}\label{sec:rankthree}

We determine the generator weights for representations of small dimension.
These results will be used in Sections~\ref{sec:Dfamily} and~\ref{sec:odd-indices}.

\subsection{Irreducible representations}

We first determine bases and generator weights for irreducible
representations of dimensions at most three.
For standard holomorphic exponents, cyclicity in dimensions two and three
is proved in \cite[Theorems~3.1 and~4.1]{Marks11}. Generator-weight results for irreducible representations of
dimension less than six are given in
\cite[Sections~4.2--4.3]{Gan14}.
We include direct proofs of the cases needed below.

\begin{lemma}\label{lem:rank-one}
Let $\chi$ be one-dimensional and let $\cE=\{(\ell,x)\}$ be complete.  Then
\[
M(\chi;\cE)=\M\,\eta^{24x}v,
\]
where $\ell(v)=1$.  In particular, the unique generator weight is $12x$ and
\[
\mathcal D(\eta^{24x}v)=0.
\]
\end{lemma}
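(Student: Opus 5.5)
The plan is to verify directly that $\eta^{24x}v$ lies in $M(\chi;\cE)$, and then to apply Theorem~\ref{thm:basis-criterion} with $r=1$. The Serre-derivative statement will follow from the derivative identity established before Proposition~\ref{prop:derivative-independent}.

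First, we show that $\eta^{24x}v$ has the right transformation law. Write $\chi(T)=\e(c)$ with $\ell\circ\chi(T)=\e(x)\ell$. Since $V$ is one-dimensional and $\ell\ne0$, this forces $\chi(T)=\e(x)$. Every character is a power of $\nu_\eta^2$ determined by its value at $T$, and $\nu_\eta^{2m}(T)=\e(m/12)$, so $\e(x)$ is a twelfth root of unity and $12x\in\Z$. (This also follows from Lemma~\ref{lem:determinant}.) Hence $\chi=\nu_\eta^{24x}$, and $\eta^{24x}v$ transforms with weight $12x$ under $\chi$. Because $\rho(-I)=(-1)^\kappa$, the parity of $12x$ automatically matches $\kappa$. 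Next, $\eta^{24x}=q^{x}(1+O(q))$ is holomorphic and nonvanishing on $\Hh$, and $\ell(\eta^{24x}v)=\eta^{24x}=O(q^x)$. Thus $\eta^{24x}v\in M_{12x}(\chi;\cE)$.

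Second, a single nonzero form is $\M$-linearly independent, and its weight $12x$ equals $12\tr\cE=12x$. Theorem~\ref{thm:basis-criterion} with $r=1$ therefore shows that $\eta^{24x}v$ is an $\M$-basis. By the freeness theorem the generator weight multiset is unique (it is read off the Hilbert series), so the unique generator weight is $12x$. For an elementary alternative that avoids the criterion, take any $F\in M_k(\chi;\cE)$ and note that $F/(\eta^{24x}v)$ is holomorphic on $\Hh$ (since $\eta$ has no zeros), has trivial character, and is $O(1)$ at the cusp. It is therefore a scalar form in $M_{k-12x}$.

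Finally, $\DD(\eta^{24x}v)=0$ follows from the equivalence $\DD_kF=0\iff F=\eta^{2k}v$ with $k=12x$, or directly from $q\,d\log\eta/dq=E_2/24$. No step is a real obstacle. The only point needing care is identifying $\chi$ with $\nu_\eta^{24x}$ exactly rather than merely up to its value at $T$, and that is supplied by the fact that characters of $\Gammaone$ are determined by their value at $T$.
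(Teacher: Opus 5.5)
Your proof is correct and follows the paper's argument. You identify $\chi$ with $\nu_\eta^{24x}$ through its value at $T$, check that $\eta^{24x}v\in M_{12x}(\chi;\cE)$, apply Theorem~\ref{thm:basis-criterion} with $r=1$, and obtain $\DD(\eta^{24x}v)=0$ from $q\,d\log\eta/dq=E_2/24$. Your direct division argument is a harmless extra; it is just the $r=1$ case of the Cramer's-rule step in that theorem.
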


\begin{proof}
The character of $\eta^{24x}$ has the same value on $T$ as $\chi$, hence is $\chi$.  The displayed form belongs to the prescribed module and has weight $12x$.  Theorem~\ref{thm:basis-criterion} shows that it is a basis.  The last assertion follows from $q\,d\log\eta/dq=E_2/24$.
\end{proof}

\begin{lemma}\label{lem:rank-two}
Let $\rho$ be irreducible of dimension two and let $\cE$ be complete, with
\[
s=\tr\cE.
\]
Then there is a form $F$ of weight $6s-1$ such that
\[
F,\mathcal DF
\]
is a basis of $M(\rho;\cE)$.  Thus the generator weights are
\[
6s-1,\qquad6s+1.
\]
\end{lemma}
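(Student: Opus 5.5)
Take the homogeneous free basis of Theorem~\ref{thm:freeness}, of rank $2$, and let $F$ be a generator of least weight $k_1$, with other generator $F'$ of weight $k_2\ge k_1$. By Theorem~\ref{thm:basis-criterion}, $k_1+k_2=12s$.

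First I show $V(F)=V$. By Lemma~\ref{lem:value-span}, $V(F)$ is a nonzero $\rho$-invariant subspace. Since $\rho$ is irreducible of dimension two, $V(F)=V$, so $\dim V(F)=2$. Proposition~\ref{prop:derivative-independent} then gives that $F$ and $\DD F$ are $\M$-linearly independent, and $\DD F\in M_{k_1+2}(\rho;\cE)$.

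Next I expand $\DD F=aF+bF'$ with $a\in M_2=0$, so $\DD F=bF'$ with $b\in M_{k_1+2-k_2}$. Here $b\neq0$, since otherwise $\DD F=0$ and $V(F)$ would be one-dimensional. Hence $k_2\le k_1+2$. Because $M_{k_1+2-k_2}$ must be nonzero and $k_2\ge k_1$, the weight $k_1+2-k_2$ lies in $\{0,2\}$ after parity considerations, and weight $2$ is impossible since $M_2=0$. It remains to exclude $k_2=k_1$, i.e.\ that both generators have the same weight.

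For the equal-weight case, the forms $F,\DD F$ are independent with weight sum $2k_1+2$. By Theorem~\ref{thm:basis-criterion}, the weight sum of any independent pair is at least $12s=2k_1$, with no contradiction yet. I would argue instead with the Wronskian. By \eqref{eq:wronskian-weight}, $W(F)$ is a nonzero scalar modular form of weight $2k_1+2-12s=2$ if $k_1=k_2=6s$, and there are no nonzero forms of weight $2$. This is a contradiction, and it is the key step. Here $\cE_F=\cE$ because $V(F)=V$, so the weight formula applies with $\tr\cE_F=s$.

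Hence $k_2=k_1+2$, and $k_1=6s-1$. Then $F,\DD F$ are independent with weight sum $12s$, so Theorem~\ref{thm:basis-criterion} shows they form a basis.

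The main obstacle is ruling out equal generator weights, and the Wronskian weight-two argument handles it cleanly. The parity bookkeeping is routine: $k_1\equiv\kappa$, and $6s-1$ is then forced to be an integer of that parity.
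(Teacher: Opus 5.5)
Your proof is correct and follows the paper's route: take a least-weight generator $F$, use $V(F)=V$ to get $F,\DD F$ independent, use the $M_2=0$ coefficient argument to force $k_2=k_1+2$, and finish with the basis criterion to get $k_1=6s-1$. Your expansion $\DD F=bF'$ with $b\in M_{k_1+2-k_2}$ already excludes $k_2=k_1$, since that is exactly the case $b\in M_2=0$, which is also how the paper handles it. So your "remaining" Wronskian step is redundant, though valid; on its own it would prove the lemma without the expansion, because $\wt W(F)=2k_1+2-12s$ must lie in $\{0,2\}$ (as $k_1\le 6s$) and cannot equal $2$.
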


\begin{proof}
Let $k_1\le k_2$ be the weights of a homogeneous basis and choose a nonzero form $F$ of least weight $k_1$.  Irreducibility implies $V(F)=V$, so $F$ and $\mathcal DF$ are independent by Proposition~\ref{prop:derivative-independent}.  Therefore a generator of weight at most $k_1+2$ is needed.  Since the weights have fixed parity, either $k_2=k_1$ or $k_2=k_1+2$.

If $k_2=k_1$, choose independent $F,G$ of weight $k_1$.  Their derivatives have weight $k_1+2$, but $M_2=0$, so a basis of weights $k_1,k_1$ would force $\mathcal DF=\mathcal DG=0$.  This would give one-dimensional invariant value spans, contrary to irreducibility.  Hence $k_2=k_1+2$, and we may take the second generator to be $\mathcal DF$.  Theorem~\ref{thm:basis-criterion} gives
\[
2k_1+2=12s,
\]
so $k_1=6s-1$.
\end{proof}

The same argument in rank three gives the irreducible cyclic case.

\begin{lemma}\label{lem:rank-three-irreducible}
If $\rho$ is irreducible of dimension three, then
\[
M(\rho;\cE)=\M F\oplus\M\mathcal DF\oplus\M\mathcal D^2F
\]
for a form $F$ of weight $4\tr\cE-2$.
\end{lemma}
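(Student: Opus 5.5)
We follow the proof of Lemma~\ref{lem:rank-two}. Let $k_1\le k_2\le k_3$ be the weights of a homogeneous basis, which exists and has three elements by Theorem~\ref{thm:freeness}, and let $F$ be a nonzero form of least weight $k_1$. Irreducibility gives $V(F)=V$. By Proposition~\ref{prop:derivative-independent}, the forms $F,\DD F,\DD^2F$ are $\M$-linearly independent. Once we show that the generator weights are exactly $k_1,k_1+2,k_1+4$, Theorem~\ref{thm:basis-criterion} applies to the independent family $F,\DD F,\DD^2F$: its weight sum is $3k_1+6=12\tr\cE$, so it is a basis and $k_1=4\tr\cE-2$.

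To pin down the weights, we use the fact that $M_2=0$ and that $M_w$ is spanned by $1$ in weight $0$ and by $E_4$ in weight $4$. Theorem~\ref{thm:basis-criterion} gives $k_1+k_2+k_3=12\tr\cE$. Proposition~\ref{prop:derivative-independent}, together with the gap argument of the rank-two case, forces small gaps: if $k_2>k_1+2$, then $\DD F$ of weight $k_1+2$ lies in $\M F$, hence in $M_2F=0$. This is impossible, because $\DD F=0$ would make $V(F)$ one-dimensional. So $k_2\in\{k_1,k_1+2\}$.

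We then run through the cases, and this case analysis is the main obstacle.

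\emph{Case $k_2=k_1$.} Take independent $F,G$ of weight $k_1$. For each nonzero $H$ in their span, $\DD H$ has weight $k_1+2$. It must be a $\C$-combination of basis elements of weight $k_1+2$, since weight-two coefficients vanish. If $k_3\ne k_1+2$, then $\DD F=\DD G=0$, which contradicts irreducibility. So $k_3=k_1+2$, and then $\DD F,\DD G$ both lie in $\C F_3$. Hence some nonzero combination $H=aF+bG$ has $\DD H=0$, again a contradiction.

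\emph{Case $k_2=k_1+2$.} Here $\DD F=cF_2$ with $c\ne0$, so we may take $F_2=\DD F$. If $k_3>k_1+4$, then $\DD^2F\in M_4F+M_2\DD F=\C E_4F$. Thus $F$ and $\DD F$ span a rank-two $\M$-submodule stable under $\DD$. Pick $\lambda$ with $\DD^2F=\lambda E_4F$. Consider the Wronskian determinant $\det(F,\DD F,\DD^2F)$, which is nonzero by Proposition~\ref{prop:derivative-independent}. Substituting $\DD^2F=\lambda E_4F$ makes the columns dependent, so the determinant vanishes, a contradiction. Hence $k_3\le k_1+4$.

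It remains to exclude $k_3\in\{k_1,k_1+2\}$. If $k_3=k_1$, the argument of the first case applied to $F$ and $F_3$ gives the contradiction. If $k_3=k_1+2$, then $\DD^2F\in\C E_4F+\C$-span$\{\DD F,F_3\}$ modulo nothing else. In that case we would instead compare with the Serre derivatives of the forms of weight $k_1+2$. The cleaner route is to use the weight sum: since $3k_1\le 12\tr\cE$ and the Wronskian weight \eqref{eq:wronskian-weight} gives $3k_1+6-12\tr\cE\ge0$, we get $k_1+k_2+k_3\le 3k_1+6$. The possible triples are $(k_1,k_1+2,k_1+2)$ and $(k_1,k_1+2,k_1+4)$. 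The first has weight sum $3k_1+4$, which exceeds $3k_1+6-\wt W(F)$ only if $W(F)$ has weight at most $2$, so $W(F)$ would be constant. Then the independent family $F,\DD F,\DD^2F$ itself satisfies the equality in Theorem~\ref{thm:basis-criterion} with sum $3k_1+6$, contradicting the sum $3k_1+4$. This leaves weights $k_1,k_1+2,k_1+4$, and the first paragraph concludes the proof.
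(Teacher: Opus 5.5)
Your argument is correct and essentially follows the paper's route. Both proofs rest on $M_2=0$, on the fact that a nonzero form killed by $\DD$ has a one-dimensional value span, and on applying the basis criterion to the independent family $F,\DD F,\DD^2F$. The only real divergence is how you exclude the weights $(k_1,k_1+2,k_1+2)$: you note that $\det(F,\DD F,\DD^2F)/\eta^{24\tr\cE}$ would then be a nonzero form of weight $2$, whereas the paper observes that $M_{k_1+4}(\rho;\cE)=\C E_4F$ in that case, the same contradiction as in your $k_3>k_1+4$ subcase. Two small slips there: $\DD F$ and $F_3$ cannot enter $\DD^2F$ with constant coefficients, since their coefficients would have weight $2$, and the subcase $k_3=k_1$ is vacuous once $k_2=k_1+2$.
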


\begin{proof}
Let $k_0$ be the least nonzero weight and let $m=\dim M_{k_0}(\rho;\cE)$.  A basis of this space extends to a homogeneous free basis, since no nonzero form of weight $k_0$ lies in $(E_4,E_6)M(\rho;\cE)$.  Hence $1\le m\le3$.

If $m=3$, all three free generators have weight $k_0$.  Their derivatives must vanish, since $M_2=0$, contradicting irreducibility.  If $m=2$, choose independent $F,G$ of weight $k_0$ and let $H$ be the third generator.  The derivatives $\DD F,\DD G$ are nonzero.  Thus $\wt(H)=k_0+2$, and
\[
\DD F=aH,\qquad\DD G=bH
\]
for constants $a,b$, not both zero.  The nonzero form $bF-aG$ is killed by $\DD$ and has a one-dimensional invariant value span, again a contradiction.  Therefore $m=1$.

Choose $F$ spanning the least-weight space.  Irreducibility gives $V(F)=V$, so Proposition~\ref{prop:derivative-independent} shows that $F,\DD F,\DD^2F$ are independent.  If $k_1\le k_2$ are the remaining generator weights, then $\DD F\ne0$ and $M_2=0$ give $k_1=k_0+2$; choose the corresponding generator to be $\DD F$.  If $k_2=k_0+2$ or $k_2>k_0+4$, every form of weight $k_0+4$ is a multiple of $E_4F$, since $M_2=0$.  This is impossible for $\DD^2F$.  Hence $k_2=k_0+4$, and its coefficient in $\DD^2F$ is a nonzero constant.  Replacing the third generator by $\DD^2F$ gives a basis.  Theorem~\ref{thm:basis-criterion} now gives $3k_0+6=12\tr\cE$.
\end{proof}

The following two lists of generator weights will be used for Jacobi forms of lattice
index $D_{8d}^{+}(5)$.

\begin{proposition}\label{prop:rank-four-five}
Let $\rho$ be irreducible, with $\rho(-I)=I$, and let $\cE$ be a
complete system of exponent conditions with $\tr\cE=2$.
If $\dim\rho=4$, the generator weights of $M(\rho;\cE)$ are
\[
4,\quad6,\quad6,\quad8.
\]
If $\dim\rho=5$, they are
\[
2,\quad4,\quad4,\quad6,\quad8.
\]
\end{proposition}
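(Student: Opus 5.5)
The plan is to bound the least weight from below with the modular Wronskian, use the weight-sum identity $\sum_i\wt(F_i)=12\tr\cE=24$ from Theorem~\ref{thm:basis-criterion} to reduce to a finite list of candidate weight multisets, and then eliminate all but one candidate. The eliminations all use a single principle. Let $F$ be a nonzero form of least weight $k_0$. By irreducibility $V(F)=V$, so $F,\DD F,\ldots,\DD^{r-1}F$ are independent by Proposition~\ref{prop:derivative-independent}. Consequently no $\DD^jF$ with $j<r$ can lie in the $\M$-span of $F,\ldots,\DD^{j-1}F$. Moreover, no nonzero form is killed by $\DD$, since it would have a one-dimensional invariant value span. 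Throughout, $\rho(-I)=I$ means all weights are even, and $M_2=0$.

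\emph{Lower bound.} Since $V(F)=V$, the retained system is $\cE_F=\cE$. Then \eqref{eq:wronskian-weight} gives $0\le \wt W(F)=rk_0+r(r-1)-24$. For $r=4$ this yields $k_0\ge3$, hence $k_0\ge4$. For $r=5$ it yields $k_0\ge 4/5$, hence $k_0\ge2$.

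\emph{Counting tool.} Let $g_k$ be the number of generators of weight $k$ in a homogeneous basis. Because $M_2=0$, a form of weight $k_0+2$ is an $\M$-combination of generators of weight $k_0+2$ only. Hence $\DD$ induces a linear map $M_{k_0}(\rho;\cE)\to$ (span of the weight-$(k_0+2)$ generators). This map is injective, since its kernel consists of forms killed by $\DD$. Therefore $g_{k_0+2}\ge g_{k_0}$.

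\emph{Case $r=4$.} The candidates are the multisets of four even weights $\ge4$ summing to $24$:
\[
(4,4,4,12),\quad (4,4,6,10),\quad (4,4,8,8),\quad (4,6,6,8),\quad (6,6,6,6).
\]
The case $(6,6,6,6)$ fails because $g_8=0<g_6$. The cases $(4,4,4,12)$, $(4,4,6,10)$ and $(4,4,8,8)$ fail because $g_6<g_4$. Only $(4,6,6,8)$ remains.

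\emph{Case $r=5$, $k_0\ge4$.} Here $k_0=6$ would force a sum of at least $30$. So $k_0=4$, and the candidates are $(4,4,4,6,6)$ and $(4,4,4,4,8)$. Both violate $g_6\ge g_4$. (With $g_4\le2$, no admissible completion to sum $24$ exists.)

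\emph{Case $r=5$, $k_0=2$, $g_2\ge2$.} In weight $6$ every form is $E_4$ times a weight-$2$ form plus a combination of weight-$6$ generators. First I would rule out $g_2\ge2$.
\begin{itemize}
\item The counting tool gives $g_4\ge g_2\ge2$, so the sum forces $(2,2,4,4,12)$. Then $g_6=0$.
\item Hence $\DD^2$ maps $U=M_2(\rho;\cE)$ into $E_4U$. Pick an eigenvector $F$ with $\DD^2F=\lambda E_4F$.
\item This contradicts the independence of $F,\DD F,\DD^2F$. (Equivalently, the components satisfy a second-order modular linear differential equation, so $\dim V(F)\le2$ by Lemma~\ref{lem:value-span}.)
\item If $U$ is larger, the same eigenvector argument applies.
\end{itemize}

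\emph{Case $r=5$, $k_0=2$, $g_2=1$.} Now $g_4\ge1$, and the remaining four weights are $\ge4$ with sum $22$. The candidates are
\[
(2,4,4,4,10),\quad (2,4,4,6,8),\quad (2,4,6,6,6).
\]
\begin{itemize}
\item For $(2,4,4,4,10)$: $g_6=0$, so $\DD^2F\in\C E_4F$. This is a forbidden relation.
\item For $(2,4,6,6,6)$: $g_8=0$. A weight-$8$ form expands with coefficients of weights $6,4,2$ on generators of weights $2,4,6$. The weight-$2$ coefficients vanish, so $\DD^3F\in \C E_6F+\C E_4\,\DD F$, taking $\DD F$ as the weight-$4$ generator. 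This again contradicts Proposition~\ref{prop:derivative-independent}.
\end{itemize}
Only $(2,4,4,6,8)$ survives.

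The main obstacle I expect is making the case $g_2\ge2$ rigorous, in particular the eigenvector step when $\DD^2$ acts on $U$ through $E_4$, and checking that the candidate enumeration is complete. The rest is bookkeeping with the single forbidden-relation principle.
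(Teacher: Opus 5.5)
Your proof is correct and follows the paper's argument: the same Wronskian lower bound, the same weight-sum enumeration, injectivity of $\DD$ to compare generator counts, and the same relation $\DD^3F\in\C E_6F+\C E_4\DD F$ to exclude $(2,4,6,6,6)$. The eigenvector step you were worried about is sound. For $(2,2,4,4,12)$, the conditions $g_6=0$ and $M_2=0$ force $M_6(\rho;\cE)=E_4M_2(\rho;\cE)$, so $E_4^{-1}\DD^2$ is a linear endomorphism of a two-dimensional space, and an eigenvector gives a dependence among $F,\DD F,\DD^2F$; this is slightly shorter than the paper's closure-of-the-meromorphic-span argument. Likewise, your direct relation $\DD^2F\in\C E_4F$ for $(2,4,4,4,10)$ replaces the paper's appeal to injectivity of $\DD_4$.
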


\begin{proof}
Write $r=\dim\rho$.  For every nonzero homogeneous form $F$,
irreducibility gives $V(F)=V$.  Proposition~\ref{prop:derivative-independent}
therefore gives a nonzero Wronskian, and
\eqref{eq:wronskian-weight} implies
\[
r\bigl(\wt(F)+r-1\bigr)\ge24.
\]
The Serre derivative is injective, since a nonzero form killed by it
has a one-dimensional invariant value span.  Theorem~\ref{thm:basis-criterion}
gives a total generator weight of $24$, and all weights are even.

Suppose first that $r=4$.  The Wronskian bound gives $\wt(F)\ge4$.
The possible nondecreasing generator-weight lists are
\[
(4,4,4,12),\quad(4,4,6,10),\quad(4,4,8,8),\quad
(4,6,6,8),\quad(6,6,6,6).
\]
For the first three lists, $\dim M_4(\rho;\cE)>\dim M_6(\rho;\cE)$,
contrary to injectivity of $\DD_4$.  For the last list,
$M_6(\rho;\cE)$ has dimension four and $M_8(\rho;\cE)=0$.
Thus only $(4,6,6,8)$ remains.

Now suppose that $r=5$.  The Wronskian bound gives $\wt(F)\ge2$.
Let $a$ and $b$ be the numbers of generators of weights two and four.
Since the scalar space $M_2$ is zero, injectivity of $\DD_2$ gives
$b\ge a$.
If $a=0$, the only possible lists are
$(4,4,4,4,8)$ and $(4,4,4,6,6)$.
Both contradict injectivity of $\DD_4$.
If $a\ge3$, the inequality $b\ge a$ contradicts the rank.
If $a=2$, the weight sum forces $(2,2,4,4,12)$.
Choose a basis $F_1,F_2$ of $M_2(\rho;\cE)$.
Their derivatives form a basis of $M_4(\rho;\cE)$, whereas
\[
M_6(\rho;\cE)=E_4\Span_{\C}\{F_1,F_2\}.
\]
Consequently, $\DD^2F_i$ belongs to the meromorphic span of
$F_1,F_2$.  The meromorphic span of
$F_1,F_2,\DD F_1,\DD F_2$ is therefore preserved by $q\,d/dq$,
and hence contains every successive Serre derivative of $F_1$.
Its dimension is at most four, contradicting the independence of
$F_1,\DD F_1,\ldots,\DD^4F_1$.

It remains that $a=1$.  The possible lists are
\[
(2,4,4,4,10),\qquad(2,4,4,6,8),\qquad(2,4,6,6,6).
\]
The first contradicts injectivity of $\DD_4$.
For the last, let $F$ span the weight-two space.  Then $\DD F$
spans the weight-four space, and
\[
M_8(\rho;\cE)=\Span_{\C}\{E_6F,E_4\DD F\}.
\]
Thus $\DD^3F$ lies in the meromorphic span of $F,\DD F$,
again contradicting Proposition~\ref{prop:derivative-independent}.
The only remaining list is $(2,4,4,6,8)$.
\end{proof}

\subsection{Reducible representations of dimension three}\label{subsec:rank3classification}

We determine the possible generator weights for completely reducible
representations of dimension three. The main case is a direct sum of a
one-dimensional representation and an irreducible two-dimensional
representation, with cusp conditions that need not respect this decomposition.

Suppose first that $\rho$ is a direct sum of three one-dimensional characters.  Characters of $\Gammaone$ are determined by their value on $T$, so each $T$-eigenspace is a subspace on which $\rho$ acts by one character.  If $v_1,v_2,v_3$ is the basis dual to the covectors in $\cE=\{(\ell_i,c_i)\}_{i=1}^3$, each line $\C v_i$ is therefore invariant.  The forms
\[
\eta^{24c_1}v_1,\qquad\eta^{24c_2}v_2,\qquad\eta^{24c_3}v_3
\]
form a basis by Lemma~\ref{lem:rank-one}, applied to the three coordinate conditions.

For the remaining case, let $V=V_1\oplus V_2$ be an invariant direct sum, with
\[
\rho_j=\rho|_{V_j},\qquad \dim V_1=1,\qquad \dim V_2=2,
\]
and assume that $\rho_2$ is irreducible.  Fix a complete system
$\cE=\{(\ell_i,c_i):1\le i\le3\}$.

The two eigenvalues of $\rho_2(T)$ are distinct.  Indeed, if $\rho_2(T)=tI$, the relation $S^2=(ST)^3$ would imply $\rho_2(S)=t^{-3}I$, contrary to irreducibility.  Choose a nonzero vector $v_1\in V_1$ and a $T$-eigenbasis $v_2,v_3$ of $V_2$.  Define the covectors
\[
\varepsilon_1\in V_1^\dual,\qquad
\varepsilon_2,\varepsilon_3\in V_2^\dual
\]
by
\[
\varepsilon_1(v_1)=1,\qquad
\varepsilon_j(v_l)=\delta_{jl}\quad(j,l\in\{2,3\}),
\]
and set
\[
x_j=\max\{c_i:1\le i\le3,\ \ell_i(v_j)\ne0\}
\quad(j=1,2,3).
\]
The sets in these maxima are nonempty since the $\ell_i$ form a basis of $V^\dual$.  Put
\[
\cE_1=\{(\varepsilon_1,x_1)\},\qquad
\cE_2=\{(\varepsilon_2,x_2),(\varepsilon_3,x_3)\}.
\]
These are complete systems of exponent conditions for $\rho_1$ and $\rho_2$.

We check exactly which forms they describe.  For a form $G=g v_1$ taking values in $V_1$, the original conditions are
\[
\ell_i(v_1)g=O(q^{c_i})\qquad(1\le i\le3).
\]
They are equivalent to $g=O(q^{x_1})$.  Since $v_2,v_3$ have distinct $T$-eigenvalues, every nonzero restriction $\ell_i|_{V_2}$ is a scalar multiple of exactly one of $\varepsilon_2,\varepsilon_3$.  Thus a form $H=h_2v_2+h_3v_3$ taking values in $V_2$ satisfies the original conditions if and only if
\[
h_2=O(q^{x_2}),\qquad h_3=O(q^{x_3}).
\]
Identifying each summand with its image in $V$, we have therefore proved
\[
M(\rho_j;\cE_j)
=\{F\in M(\rho;\cE):F(\tau)\in V_j\text{ for all }\tau\in\Hh\}
\quad(j=1,2).
\]
Changing the nonzero scales of the chosen eigenvectors does not change $x_j$; interchanging $v_2,v_3$ only interchanges $x_2,x_3$.

The following lemma describes the possible bases in this case.

\begin{lemma}\label{lem:rank3-structure}
Let
\[
\rho=\rho_1\oplus\rho_2,
\qquad \dim\rho_1=1,
\qquad \dim\rho_2=2,
\]
where $\rho_2$ is irreducible.  Let $\cE$ be complete.  Then a homogeneous basis of $M(\rho;\cE)$ can be chosen in one of the following forms:
\begin{enumerate}[label=\textup{(\roman*)}]
\item $G_1,H,\mathcal DH$, where $G_1$ takes values in the one-dimensional summand;
\item $F_2,\mathcal DF_2,H$, where $F_2$ takes values in the two-dimensional summand;
\item $H,\mathcal DH,\mathcal D^2H$, where $V(H)=V$.
\end{enumerate}
\end{lemma}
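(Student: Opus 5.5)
We follow the pattern of Lemma~\ref{lem:rank-three-irreducible}. The plan is to start from a form of least weight, use the value span of that form to split into cases, and then find the remaining generators using $M_2=0$ and injectivity properties of $\DD$. Let $k_0$ be the least weight with $M_{k_0}(\rho;\cE)\ne0$. As in that lemma, a $\C$-basis of $M_{k_0}(\rho;\cE)$ extends to a homogeneous free basis, since nothing of weight $k_0$ lies in $(E_4,E_6)M(\rho;\cE)$. The invariant subspaces of $V$ are $0$, $V_1$, $V_2$, $V$, and possibly further lines when $\rho_1$ is a constituent of $\rho_2$. The latter is impossible because $\rho_2$ is irreducible of dimension two. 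So for a nonzero form $F$, the span $V(F)$ is $V_1$, $V_2$ or $V$. Moreover, a nonzero form killed by $\DD$ has a one-dimensional invariant value span, hence takes values in $V_1$. Therefore $\DD$ is injective on forms whose value span meets $V_2$ nontrivially in projection.

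First suppose some nonzero $H$ of weight $k_0$ has $V(H)=V$. Proposition~\ref{prop:derivative-independent} then makes $H,\DD H,\DD^2H$ independent over $\M$. We aim to show these form a basis via Theorem~\ref{thm:basis-criterion}, which requires $3k_0+6=12\tr\cE$. We will rerun the argument of Lemma~\ref{lem:rank-three-irreducible}. If $\dim M_{k_0}\ge2$, take a second independent form $G$ of weight $k_0$. If a suitable combination of $H$ and $G$ lies in $V_1$ or $V_2$, we pass to the other cases below. If not, we repeat the $m=2,3$ arguments: a combination killed by $\DD$ must take values in $V_1$, and we handle this by switching to case (i). The remaining generator weights are then forced to be $k_0+2$ and $k_0+4$, with the generators replaceable by $\DD H$ and $\DD^2H$, exactly as before, using that every weight-$(k_0+4)$ form spanned by $E_4H$ alone cannot contain $\DD^2H$.

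Otherwise every form of least weight has value span $V_1$ or $V_2$. If some least-weight $F_2$ takes values in $V_2$, then $F_2$ and $\DD F_2$ are independent by Proposition~\ref{prop:derivative-independent}. The restricted module $M(\rho_2;\cE_2)$ sits inside $M(\rho;\cE)$. We complete $F_2,\DD F_2$ by a third generator $H$: take a free basis, note that the generator of weight $k_0+2$ can be replaced by $\DD F_2$ (its coefficient is a nonzero constant because $M_2=0$ and $\DD F_2\notin\M F_2$), and take $H$ to be the remaining generator. This gives (ii). If instead the least-weight forms take values in $V_1$, let $G_1=\eta^{24x}v_1$ generate $M(\rho_1;\cE_1)$ by Lemma~\ref{lem:rank-one}. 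We would show that $G_1$ is a free generator and that the remaining two generators can be chosen as $H,\DD H$. For the latter, let $H$ be a generator of least weight among those not in $\M G_1$. Its projection to $V_2$ is nonzero, so $\DD H\notin \M G_1+\M H$ by independence: the projection to $V_2$ spans $V_2$, and Proposition~\ref{prop:derivative-independent} applied to that projection shows $H,\DD H$ are independent modulo $V_1$-valued forms. A weight count with $M_2=0$ then shows that $\DD H$ may replace the last generator. This gives (i).

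The main obstacle is the replacement step in case (i), and the mixed sub-case of the first case. In both, one must ensure that the last generator's coefficient in $\DD H$ is a nonzero constant rather than something of positive weight. The difficulty is ruling out that the last generator has weight above $\wt(H)+2$ while $\DD H$ lies in $\M G_1+\M H$. We plan to handle this by projecting to $V_2$. The projection of $\DD H-aG_1-bH$ vanishing forces the projection $\pi_2 H$ to satisfy a first-order equation $\DD\pi_2H=b\,\pi_2H$ with $b\in M_2=0$. That means $\DD\pi_2 H=0$, which contradicts irreducibility of $\rho_2$.
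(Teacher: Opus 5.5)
Your proposal is correct and uses essentially the same ingredients as the paper: least-weight forms extend to a free basis, $M_2=0$, only $V_1$-valued forms are killed by $\DD$, projection to $V_2$ gives type~(i), and the $E_4H$ argument gives type~(iii). The difference is organizational: the paper first splits on $\dim M_{k_0}(\rho;\cE)$. It rules out dimension $3$ because the weight-$k$ kernel of $\DD$ is at most one-dimensional, and it turns dimension $2$ directly into type~(i) via the $V_1$-valued combination $bF-aG$, which is cleaner than rerouting from inside your case $V(H)=V$.
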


\begin{proof}
Let $k_0$ be the least nonzero weight of $M(\rho;\cE)$ and choose a homogeneous free basis, using Theorem~\ref{thm:freeness}.

The space of weight-$k$ forms killed by $\DD$ has dimension at most one: such a form equals $\eta^{2k}v$, and $v$ must belong to the unique one-dimensional subrepresentation $V_1$.  Thus $\dim M_{k_0}(\rho;\cE)\ne3$, since three generators of weight $k_0$ would all have zero derivative.

Suppose $\dim M_{k_0}(\rho;\cE)=2$.  The two least-weight free generators $F,G$ form a basis of this space.  If the third generator had weight greater than $k_0+2$, then $\DD F=\DD G=0$, contrary to the preceding bound.  It therefore has weight $k_0+2$; call it $H$.  We have $\DD F=aH$ and $\DD G=bH$, with $(a,b)\ne(0,0)$.  Replacing one least-weight generator by the nonzero form $bF-aG$, and the other by a form with nonzero derivative, gives a basis of type~\textup{(i)}.

Now suppose $\dim M_{k_0}(\rho;\cE)=1$.  Write the chosen basis as $F,H,K$, with $\wt(F)=k_0<\wt(H)\le\wt(K)$.  Complete reducibility and the nonisomorphic irreducible summands $V_1,V_2$ imply that $V(F)$ is $V_1$, $V_2$, or $V$.

If $V(F)=V_1$, the coordinate description preceding the lemma and Lemma~\ref{lem:rank-one} show that $F$ is a nonzero constant multiple of the generator of $M(\rho_1;\cE_1)$.  Indeed, a nonconstant scalar multiple would have larger weight than that generator.  Consequently $P_2H\ne0$, where $P_2:V\to V_2$ is the equivariant projection: otherwise $H$ would be an $\M$-multiple of $F$.  The projection commutes with $\DD$.  If $\DD H$ belonged to $\M F+\M H$, the coefficient of $H$ would lie in $M_2=0$, giving $\DD(P_2H)=0$ and contradicting irreducibility of $\rho_2$.  Hence $K$ is needed in weight at most $\wt(H)+2$.  Equality $\wt(K)=\wt(H)$ is also impossible, since then the coefficients of both $H$ and $K$ in $\DD H$ would have weight two.  Thus $\wt(K)=\wt(H)+2$, and its coefficient in $\DD H$ is a nonzero constant.  Replacing $K$ by $\DD H$ gives type~\textup{(i)}.

If $V(F)=V_2$ or $V$, then $F,\DD F$ are independent.  Thus $\wt(H)=k_0+2$ and we may take $H=\DD F$.  If $V(F)=V_2$, this already gives type~\textup{(ii)}.  If $V(F)=V$, the form $\DD^2F$ is independent of $F,\DD F$.  As in Lemma~\ref{lem:rank-three-irreducible}, the third generator cannot have weight $k_0+2$ or a weight greater than $k_0+4$.  Its weight is therefore $k_0+4$, and it may be replaced by $\DD^2F$.  This gives type~\textup{(iii)}.
\end{proof}

Put
\[
s=\tr\cE,\qquad A=12x_1,\qquad B=6(x_2+x_3)-1.
\]
By Lemmas~\ref{lem:rank-one} and \ref{lem:rank-two}, choose generators
\[
G_1=\eta^{24x_1}v_1\in M_A(\rho_1;\cE_1),\qquad
F_2\in M_B(\rho_2;\cE_2),
\]
so that $G_1$ is a basis of the first module and $F_2,\DD F_2$ is a basis of the second.

\begin{theorem}\label{thm:rank3-classification}
With the preceding notation, put
\[
\delta=x_1+x_2+x_3-s\in\Z_{\ge0}.
\]
Then
\[
\delta=0
\quad\Longleftrightarrow\quad
M(\rho;\cE)=M(\rho_1;\cE_1)\oplus M(\rho_2;\cE_2).
\]

If $\delta=0$, the union of the two summand bases is a basis of $M(\rho;\cE)$, with weights
\[
A,\quad B,\quad B+2.
\]

Assume $\delta>0$.  Exactly one of the following three cases occurs:
\begin{enumerate}[label=\textup{(\arabic*)}]
\item If $A\le4s+2$, there is a basis $G_1,H,\mathcal DH$ of weights
\[
A,\quad 6(s-x_1)-1,\quad 6(s-x_1)+1.
\]
\item If $A>4s+2$ and $B\le4s$, there is a basis $F_2,\mathcal DF_2,H$ of weights
\[
B,\quad B+2,\quad 12(s-x_2-x_3).
\]
\item If $A>4s+2$ and $B>4s$, there is a cyclic basis $H,\mathcal DH,\mathcal D^2H$ of weights
\[
4s-2,\quad4s,\quad4s+2.
\]
\end{enumerate}
\end{theorem}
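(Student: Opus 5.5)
The plan has two parts. First, prove the arithmetic facts about $\delta$ and the case $\delta=0$ directly from Theorem~\ref{thm:basis-criterion}. Second, for $\delta>0$, take the three basis types of Lemma~\ref{lem:rank3-structure}, compute the weights forced in each type, and use Wronskian bounds to tie each type to the stated inequalities.

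\emph{Integrality and sign of $\delta$.} Each $\ell_i$ is a $T$-eigencovector with eigenvalue $\e(c_i)$, and $\ell_i(v_j)\ne0$ forces $v_j$ to have the same $T$-eigenvalue. Hence $x_j$ is congruent modulo $\Z$ to the exponent of $v_j$. Therefore
\[
\sum_j x_j\equiv\sum_i c_i \pmod{\Z},
\]
since both sums compute $\det\rho(T)$. For the sign, the matrix $(\ell_i(v_j))$ is invertible, so some permutation $\sigma$ has $\ell_{\sigma(j)}(v_j)\ne0$ for all $j$. Then $x_j\ge c_{\sigma(j)}$ for each $j$, and summing gives $\delta\ge0$.

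\emph{The case $\delta=0$.} The forms $G_1,F_2,\DD F_2$ lie in $M(\rho;\cE)$ by the coordinate description preceding Lemma~\ref{lem:rank3-structure}. Their determinant is $\det(G_1)\cdot\det(F_2,\DD F_2)\not\equiv0$, so they are $\M$-independent. Their weight sum is
\[
A+2B+2=12(x_1+x_2+x_3)=12s+12\delta.
\]
By Theorem~\ref{thm:basis-criterion}, they form a basis exactly when $\delta=0$, and being a basis is the same as $M(\rho;\cE)=M(\rho_1;\cE_1)\oplus M(\rho_2;\cE_2)$. This gives both the equivalence and the weights $A,B,B+2$.

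\emph{The case $\delta>0$.} By Lemma~\ref{lem:rank3-structure}, some basis has type (i), (ii) or (iii), and every basis has total weight $12s$.

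In type (i), the generator $G_1$ takes values in $V_1$, so it lies in $M(\rho_1;\cE_1)=\M\,\eta^{24x_1}v_1$. Since it is a free generator, it has weight $A$. The relation $A+2w+2=12s$ then gives $w=6(s-x_1)-1$. Note that
\[
B=6(s+\delta-x_1)-1>w.
\]
So $V(H)\ne V_2$, since otherwise $H\in M(\rho_2;\cE_2)$ would have weight at least $B$. Also $V(H)\ne V_1$ by independence from $G_1$. Hence $V(H)=V$, $\cE_H=\cE$, and \eqref{eq:wronskian-weight} gives $3w+6\ge12s$, which is exactly $A\le4s+2$.

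In type (ii), $F_2$ generates $M(\rho_2;\cE_2)$, so its weight is $B$. The third weight is then $C=12s-2B-2=12(s-x_2-x_3)=A-12\delta<A$. This rules out $V(H)=V_1$. The case $V(H)=V_2$ is ruled out because $H$ would lie in $\M F_2+\M\DD F_2$. So $V(H)=V$, and the Wronskian bound $3C+6\ge12s$ rewrites as $B\le4s$.

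In type (iii), the relation $3k_0+6=12s$ gives $k_0=4s-2$.

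\emph{Exclusivity.} The generator-weight multiset is an invariant of the graded module, since it is read off from the Hilbert series. It therefore remains to show that the inequalities select exactly one type. This is the step I expect to be the main obstacle: showing that type (ii) forces $A>4s+2$, and that type (iii) forces both $A>4s+2$ and $B>4s$.

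For type (iii), I would expand $F_2$ and $G_1$ in the cyclic basis $H,\DD H,\DD^2H$ in the low weights $4s-2,\,4s,\,4s+2$, where all coefficients are constants or multiples of $E_4$. If $B\le4s$, then $F_2$ is a constant multiple of $H$ (impossible, since $V(H)=V$) or of $\DD H$. In the latter case $F_2,\DD F_2,H$ is also a basis, of type (ii) with weights $4s-2,4s,4s+2$; the same weights would then follow from the formula in (2), so there is no conflict. The analogous expansion of $G_1$ applies when $A\le4s+2$. There I would use $\DD G_1=0$ together with the fact that $V(G_1)=V_1$ is killed by the projection $P_2$.

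If the expansions do not close the argument, I would compare minimal weights across the three multisets directly. In type (ii), for instance, the least weight is $\min(B,C)$. When $A\le4s+2$, type (i) would give the competing multiset $\{A,w,w+2\}$, and I would show that the two multisets are incompatible.
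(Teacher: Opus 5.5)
Your route is the paper's: the $\delta=0$ case via Theorem~\ref{thm:basis-criterion}, the three types of Lemma~\ref{lem:rank3-structure}, value-span plus Wronskian bounds in types (i) and (ii), and rewriting of type (iii). Your permutation argument for $\delta\ge0$ is a valid linear-algebra substitute for the paper's appeal to Lemma~\ref{lem:determinant}.

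\textbf{The gap.} The final assembly is left open, and the step you call the main obstacle is exactly what case (1) needs. When $A\le4s+2$, nothing in your argument rules out that Lemma~\ref{lem:rank3-structure} gives only a type-(ii) basis. This closes in one line from your own type-(ii) computation. There $C=A-12\delta$, and the Wronskian bound $3C+6\ge12s$ gives $C\ge4s-2$. Since $\delta\ge1$, this yields $A\ge4s+10>4s+2$. So type (ii) never occurs in case (1). Your fallback of comparing weight multisets is unnecessary, and as sketched it presupposes the type-(i) basis you are trying to produce.

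\textbf{The type (iii) claim.} Your goal ``type (iii) forces $A>4s+2$ and $B>4s$'' is false as stated, since type (iii) is compatible with $A=4s+2$ and with $B=4s$. What your expansions actually give, and what the paper proves, is that in these boundary cases the cyclic basis can be rewritten. For $G_1$, weights $A\in\{4s-2,4s\}$ are impossible. At $A=4s+2$ one has $G_1=\alpha E_4H+\beta\DD^2H$ with $\beta\ne0$, so $G_1$ can replace $\DD^2H$. At $B=4s$, $F_2=c\,\DD H$ gives the type-(ii) basis $F_2,\DD F_2,H$. These two rewritings, together with type (i) $\Rightarrow A\le4s+2$ and type (ii) $\Rightarrow$ ($B\le4s$ and $A>4s+2$), yield the three cases.

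\textbf{The type (ii) weight.} You assert without proof that the $V_2$-valued generator $F$ generates $M(\rho_2;\cE_2)$ and has weight $B$. This needs an argument:
\begin{itemize}
\item $P_1H\ne0$, since otherwise all three basis forms take values in $V_2$ and their determinant vanishes.
\item Expand the chosen $F_2$ in the basis and project to $V_1$. This kills its $H$-coefficient, so $F_2\in\M F+\M\DD F$, which forces $\wt F\le B$.
\item Since $F\in M(\rho_2;\cE_2)$, also $\wt F\ge B$.
\end{itemize}
Your corresponding type-(i) claim, that a free generator lying in $\M G_1$ has weight $A$, is fine by uniqueness of basis expansions.
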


\begin{proof}
The preceding coordinate calculation shows that $G_1,F_2,\DD F_2$ belong to $M(\rho;\cE)$.  They are independent: the first takes values in $V_1$, while the last two are independent and take values in $V_2$.  Their total weight is
\[
A+B+(B+2)=12(x_1+x_2+x_3).
\]
Lemma~\ref{lem:determinant} gives a nonzero scalar modular form of weight $12\delta$, so $\delta\ge0$.  The eigenvalues of $T$ also give
\[
\e(x_1+x_2+x_3)=\det\rho(T)=\e(s),
\]
and hence $\delta\in\Z$.  By Theorem~\ref{thm:basis-criterion}, this particular independent family is a basis if and only if $\delta=0$.  This proves the displayed equivalence and the asserted generator weights when $\delta=0$.

Assume $\delta>0$.  Lemma~\ref{lem:rank3-structure} leaves three possible basis types.

We first identify the basis elements taking values in $V_1$ or $V_2$.  In type~(i), write the basis vector taking values in $V_1$ as $G=pG_1$, where $G_1$ generates $M(\rho_1;\cE_1)$.  If the other two basis vectors are $H,\mathcal DH$, then their projections to $V_2$ are $H_2,\mathcal DH_2$ and are independent by irreducibility and Proposition~\ref{prop:derivative-independent}.  Expanding $G_1$ in the full basis and projecting to $V_2$ therefore shows that the coefficients of $H$ and $\mathcal DH$ vanish.  Thus $G_1$ is an $\M$-multiple of $G=pG_1$, which forces $p\in\C^\times$.  Hence this basis vector may be chosen to be $G_1$ itself and has weight $A$.

Similarly, in type~(ii), write the basis as $F,\mathcal DF,H$ with $F$ taking values in $V_2$, and let $P_1:V\to V_1$ be the equivariant projection. Since $F,\mathcal DF,H$ form a basis, $P_1H\ne0$; otherwise their determinant would vanish identically. Every $G\in M(\rho_2;\cE_2)$ has an expansion
\[
G=aF+b\mathcal DF+cH,\qquad a,b,c\in\M.
\]
The projections of $G,F,\mathcal DF$ to $V_1$ are zero, so $cP_1H=0$ and hence $c=0$. Thus
\[
M(\rho_2;\cE_2)=\M F\oplus\M\mathcal DF.
\]
Their weights are therefore $B,B+2$ by Lemma~\ref{lem:rank-two}. Since $F$ has least weight in $M(\rho_2;\cE_2)$, it is a nonzero constant multiple of $F_2$. Rescaling gives the basis $F_2,\mathcal DF_2,H$.

In type~(i), the basis element taking values in $V_1$ is therefore $G_1$, of weight $A$.  The other two weights are $C,C+2$, where the weight-sum criterion gives
\[
C=6(s-x_1)-1=B-6\delta.
\]
The form of weight $C$ cannot take values only in $V_2$, since $C<B$ and $B$ is the least weight in $M(\rho_2;\cE_2)$.  It cannot take values only in $V_1$ either, since it is independent of $G_1$.  Thus its value span is three-dimensional.  The Wronskian bound \eqref{eq:wronskian-weight} gives
\[
3C+6\ge12s,
\]
which is equivalent to $A\le4s+2$.

In type~(ii), the first two forms are the restricted basis $F_2,\mathcal DF_2$ of weights $B,B+2$.  The third weight is
\[
C=12(s-x_2-x_3)=A-12\delta.
\]
The third form cannot take values in $V_1$, since $C<A$, and it cannot take values in $V_2$, since it is independent of the basis $F_2,\mathcal DF_2$ of $M(\rho_2;\cE_2)$.  Hence its value span is three-dimensional.  Again the Wronskian bound gives
\[
3C+6\ge12s.
\]
Using $C=12s-2B-2$ yields $B\le4s$.  Since $\delta\ge1$ and $C\ge4s-2$, the identity $C=A-12\delta$ also gives $A>4s+2$.

We next determine when a basis of type~(iii) can be replaced by a basis of type~(i) or~(ii).
In type~(iii), write the basis as $H,\mathcal DH,\mathcal D^2H$, with
$V(H)=V$. Its weights are
\[
C_0,\quad C_0+2,\quad C_0+4,
\qquad C_0=4s-2.
\]
The one-dimensional generator $G_1$ of weight $A$ is present in the module.  If $A<C_0+4$, then, since all weights have the same parity, $A=C_0$ or $A=C_0+2$ whenever such a form can occur.  The first possibility would make $G_1$ proportional to $H$, whose value span is $V$; the second would make it proportional to $\mathcal DH$ and hence not killed by $\mathcal D$.  Both are impossible.  If $A=C_0+4$, then $G_1$ lies in the two-dimensional space spanned by $E_4H$ and $\mathcal D^2H$.  The coefficient of $\mathcal D^2H$ is nonzero: otherwise $G_1$ would be a nonzero multiple of $E_4H$, whose value span is $V$, not $V_1$.  Replacing $\mathcal D^2H$ by $G_1$ therefore gives a basis of type~(i).  Hence a basis of type~(iii) with $A\le4s+2$ can also be written in type~(i), and this occurs only at $A=4s+2$.

Now assume $A>4s+2$.  The form $F_2$, of weight $B$, also belongs to $M(\rho;\cE)$.  In type~(iii), $B<4s-2$ is impossible, while $B=4s-2$ would make it proportional to $H$, whose value span is $V$.  If $B=4s$, it is proportional to $\mathcal DH$, and then $F_2,\mathcal DF_2,H$ is a basis of type~(ii).  Thus type~(iii) with $B\le4s$ can also be written as type~(ii).

Consequently, when $A\le4s+2$ a basis can always be chosen in type~(i).  When $A>4s+2$ and $B\le4s$, it can always be chosen in type~(ii).  When $A>4s+2$ and $B>4s$, only type~(iii) is possible.  These are the three cases in the theorem.
\end{proof}

\begin{remark}\label{rem:comparison-rank3}
For standard holomorphic exponents, irreducible representations of dimension less than four are cyclic under the modular derivative; see \cite{Marks11}.  Three-dimensional imprimitive representations and reducible but indecomposable representations are studied in \cite{FM16,CHMY}.  Theorem~\ref{thm:rank3-classification} addresses a different phenomenon: the representation itself is completely reducible, but the prescribed cusp growth need not respect the direct-sum decomposition.
\end{remark}

\section{Jacobi forms and Weil representations}\label{sec:jacobi}

Theta decomposition expresses a Jacobi form as a vector of modular
forms for the contragredient Weil representation. We describe explicitly
the exponent conditions corresponding to weak and holomorphic Jacobi
forms, including the shift in weight by half the lattice rank.
We then restrict to forms invariant under lattice automorphisms and
apply Section~\ref{sec:prescribed} to obtain freeness and weight-sum
basis criteria.

\subsection{Jacobi forms}

All lattices in this section are positive definite, even, and of even rank.  We use the lattice-index convention of \cite{Gri88}; the classical case is treated in \cite[Section~1]{EZ85}.

Let $L$ have even rank $n$.  Write
\[
Q_L(x)=\frac{(x,x)_L}{2},\qquad
L^\dual=\{x\in L\otimes\Q:(x,L)_L\subset\Z\},\qquad
A_L=L^\dual/L.
\]
Within this section we write $Q$ and $(\cdot,\cdot)$ for $Q_L$ and $(\cdot,\cdot)_L$.  On $A_L$, the same symbols denote their values in $\Q/\Z$.  For $x\in\Q$, write $\{x\}=x-\lfloor x\rfloor$.  Thus $\{-Q(\gamma)\}$ is independent of the lift chosen for $\gamma\in A_L$.

Let $\ve_\gamma$ be the standard basis of $\C[A_L]$, and let $\ve_\gamma^\dual$ be the dual covectors, with $\ve_\gamma^\dual(\ve_\delta)=\delta_{\gamma,\delta}$.  Give $\C[A_L]$ the Hermitian inner product for which this basis is orthonormal.  Define
\[
\theta_{L,\gamma}(\tau,z)
=\sum_{\ell\in L+\gamma}q^{Q(\ell)}\e((\ell,z)),
\qquad
\Theta_L(\tau,z)=\sum_{\gamma\in A_L}\theta_{L,\gamma}(\tau,z)\ve_\gamma.
\]
Positive definiteness gives normal convergence on compact subsets of $\Hh\times(L\otimes\C)$.  The theta vector transforms with the unitary Weil representation $\omega_L$:
\begin{align}
\omega_L(T)\ve_\gamma&=\e(Q(\gamma))\ve_\gamma,\label{eq:theta-T}\\
\omega_L(S)\ve_\gamma&=
\frac{\e(-n/8)}{\sqrt{|A_L|}}
\sum_{\delta\in A_L}\e(-(\gamma,\delta))\ve_\delta.
\label{eq:theta-S}
\end{align}
Here $n=\operatorname{rank}L$, as above.  Since $n$ is even, this representation is defined on $\Gammaone$.

A weak Jacobi form of weight $k$ and lattice index $L$ is a holomorphic function $\varphi:\Hh\times(L\otimes\C)\to\C$ satisfying
\begin{align*}
\varphi\left(\frac{a\tau+b}{c\tau+d},\frac{z}{c\tau+d}\right)
&=(c\tau+d)^k
\e\left(\frac{c(z,z)}{2(c\tau+d)}\right)\varphi(\tau,z),\\
\varphi(\tau,z+x\tau+y)
&=\e\left(-Q(x)\tau-(x,z)\right)\varphi(\tau,z)
\end{align*}
for $\begin{psmallmatrix}a&b\\c&d\end{psmallmatrix}\in\Gammaone$ and $x,y\in L$, and having a Fourier expansion
\[
\varphi(\tau,z)=\sum_{m\ge0}\sum_{\ell\in L^\dual}
c(m,\ell)q^m\e((\ell,z)).
\]
We denote the space by $J_{k,L}^{\mathrm{w}}$.  Its subspace $J_{k,L}$ of holomorphic Jacobi forms is defined by
\[
c(m,\ell)\ne0\quad\Longrightarrow\quad m\ge Q(\ell).
\]
We write $J_{*,L}^{\mathrm{w}}=\bigoplus_kJ_{k,L}^{\mathrm{w}}$ and $J_{*,L}=\bigoplus_kJ_{k,L}$.

For $\varphi\in J_{k,L}^{\mathrm{w}}$, periodicity in $z$ gives
\[
\varphi(\tau,z)=\sum_{\ell\in L^\dual}g_\ell(\tau)\e((\ell,z)).
\]
The functions $g_\ell$ are holomorphic in $\tau$, since they are obtained by integration over a compact real torus.  Comparing coefficients under $z\mapsto z+x\tau$ gives
\[
g_{\ell+x}(\tau)=q^{Q(\ell+x)-Q(\ell)}g_\ell(\tau)
\qquad(x\in L).
\]
Hence
\begin{equation}\label{eq:theta-coefficients}
f_\gamma(\tau)=q^{-Q(\ell)}g_\ell(\tau)
\quad(\ell\in\gamma+L),
\qquad
\varphi=\sum_{\gamma\in A_L}f_\gamma\theta_{L,\gamma},
\end{equation}
and $f_\gamma$ is independent of the chosen representative.

The coefficient representation is
\[
\rho_L(g)=\omega_L(g)^{-\mathsf t}
:=\bigl(\omega_L(g)^{-1}\bigr)^{\mathsf t}
\qquad(g\in\Gammaone).
\]
The transpose is taken in the basis $(\ve_\gamma)$.  This is the contragredient representation: $\rho_L(g)^{\mathsf t}\omega_L(g)=I$, so it preserves the coordinate pairing between coefficient vectors and theta vectors.  In particular,
\begin{align*}
\rho_L(T)\ve_\gamma&=\e(-Q(\gamma))\ve_\gamma,\\
\rho_L(S)\ve_\gamma&=
\frac{\e(n/8)}{\sqrt{|A_L|}}
\sum_{\delta\in A_L}\e((\gamma,\delta))\ve_\delta.
\end{align*}

For $\gamma\in A_L$, define
\[
m(\gamma)=\min_{\ell\in L+\gamma}Q(\ell),
\qquad
h(\gamma)=\{-Q(\gamma)\}.
\]

\begin{theorem}\label{thm:theta-decomposition}
The map
\[
F=\sum_{\gamma\in A_L}f_\gamma\ve_\gamma
\longmapsto
\sum_{\gamma\in A_L}f_\gamma\theta_{L,\gamma}
\]
restricts to isomorphisms
\begin{align*}
M_k(\rho_L;\cE_L^{\mathrm{w}})&\xrightarrow{\sim}J_{k+n/2,L}^{\mathrm{w}},
&
\cE_L^{\mathrm{w}}&=\{(\ve_\gamma^\dual,-m(\gamma))\}_{\gamma\in A_L},\\
M_k(\rho_L;\cE_L^{\mathrm{hol}})&\xrightarrow{\sim}J_{k+n/2,L},
&
\cE_L^{\mathrm{hol}}&=\{(\ve_\gamma^\dual,h(\gamma))\}_{\gamma\in A_L}.
\end{align*}
\end{theorem}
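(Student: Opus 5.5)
The plan is to verify three things in turn: the map lands in Jacobi forms, it is injective, and the exponent conditions match exactly. For $F=\sum_\gamma f_\gamma\ve_\gamma\in M_k(\rho_L;\cE)$, we set $\varphi=\sum_\gamma f_\gamma\theta_{L,\gamma}=F^{\mathsf t}\Theta_L$. Each $\theta_{L,\gamma}$ satisfies the elliptic law $z\mapsto z+x\tau+y$ for $x,y\in L$, since $(\ell,y)\in\Z$ for $\ell\in L^\dual$ and the shift $\ell\mapsto\ell+x$ permutes $\gamma+L$. The modular law then follows from the theta transformation. Writing $\Theta_L|\gamma$ for the Jacobi slash of weight $n/2$ and index $L$, we have $\Theta_L|\gamma=\omega_L(\gamma)\Theta_L$. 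Checking this on the generators $T$ and $S$ suffices; these are \eqref{eq:theta-T} and \eqref{eq:theta-S}, with the $S$ case being Poisson summation. Because $\rho_L(\gamma)^{\mathsf t}\omega_L(\gamma)=I$, the pairing $F^{\mathsf t}\Theta_L$ is invariant, and the weights add to $k+n/2$.

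For injectivity and surjectivity we use the computation preceding the theorem. Given any weak Jacobi form $\varphi$, \eqref{eq:theta-coefficients} yields $f_\gamma$ with $\varphi=\sum f_\gamma\theta_{L,\gamma}$. These $f_\gamma$ are uniquely determined, since the functions $\e((\ell,z))$ for distinct $\ell\in L^\dual$ are independent. This gives injectivity. For surjectivity we must also show that $F=(f_\gamma)$ transforms by $\rho_L$. To do this, apply the Jacobi modular law to $\varphi$ and to the theta expansion. The result is $\sum_\gamma\bigl((F|_k g)-\rho_L(g)F\bigr)_\gamma\theta_{L,\gamma}=0$, and the theta functions are linearly independent over functions of $\tau$ because their $z$-Fourier supports $\gamma+L$ are disjoint. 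Holomorphy of the $f_\gamma$ was already noted.

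The main step is matching the cusp conditions. We have $c(m,\ell)$ equal to the coefficient of $q^{m-Q(\ell)}$ in $f_\gamma$ for $\ell\in\gamma+L$. The weak condition says $c(m,\ell)=0$ for $m<0$, i.e.\ $f_\gamma=O(q^{-Q(\ell)})$ for every $\ell\in\gamma+L$. The strongest of these bounds is $-\min Q=-m(\gamma)$, so the weak condition is $f_\gamma=O(q^{-m(\gamma)})$. The holomorphic condition $m\ge Q(\ell)$ says that $f_\gamma$ has no negative exponents. The exponents of $f_\gamma$ lie in $-Q(\gamma)+\Z$ because $\rho_L(T)$ acts by $\e(-Q(\gamma))$, so "no negative exponents" is exactly $f_\gamma=O(q^{h(\gamma)})$. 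Conversely, these bounds give back the vanishing of the corresponding $c(m,\ell)$.

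Finally, we confirm that both systems are valid exponent conditions: $\ve_\gamma^\dual\circ\rho_L(T)=\e(-Q(\gamma))\ve_\gamma^\dual$, and both $-m(\gamma)$ and $h(\gamma)$ are congruent to $-Q(\gamma)$ modulo $\Z$. We expect the only delicate point to be the bookkeeping in the $S$-transformation: getting the contragredient and the sign conventions right, and using $n$ even so that everything is defined on $\Gammaone$ rather than the metaplectic group.
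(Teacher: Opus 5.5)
Your proposal is correct and follows essentially the same route as the paper's proof. The modular law comes from the theta transformation checked on $S$ and $T$ (Poisson summation for $S$) together with $\rho_L(g)^{\mathsf t}\omega_L(g)=I$. Disjointness of the coset supports gives both injectivity and the law $F|_kg=\rho_L(g)F$ in the surjective direction. Matching $q^a$ in $f_\gamma$ with $q^{a+Q(\ell)}$ converts the weak and holomorphic conditions into the bounds $-m(\gamma)$ and $h(\gamma)$.
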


\begin{proof}
Equation~\eqref{eq:theta-T} follows term by term, and Poisson summation gives \eqref{eq:theta-S}.  Since $S,T$ generate $\Gammaone$, for $g=\begin{psmallmatrix}a&b\\c&d\end{psmallmatrix}$ we have
\[
\Theta_L\left(g\tau,\frac z{c\tau+d}\right)
=(c\tau+d)^{n/2}\e\left(\frac{c(z,z)}{2(c\tau+d)}\right)
\omega_L(g)\Theta_L(\tau,z).
\]
The identity $\rho_L(g)^{\mathsf t}\omega_L(g)=I$ therefore gives the modular law for $\sum_\gamma f_\gamma\theta_{L,\gamma}$.  Translation of the summation variable gives its elliptic law.

A term $q^a$ in $f_\gamma$ gives the terms
\[
q^{a+Q(\ell)}\e((\ell,z)),\qquad \ell\in\gamma+L.
\]
Since $a\equiv-Q(\gamma)\pmod{\Z}$, all resulting powers of $q$ are integers.  The weak condition is equivalent to $a\ge-m(\gamma)$, and the holomorphic condition is equivalent to $a\ge0$.  The least allowed exponent in the latter case is $h(\gamma)$.  Thus both maps take values in the stated spaces.

The elliptic Fourier supports of the $\theta_{L,\gamma}$ lie in disjoint cosets of $L$.  Hence $\sum_\gamma f_\gamma\theta_{L,\gamma}=0$ implies $f_\gamma=0$ for every $\gamma$, proving injectivity.

Conversely, take a weak or holomorphic Jacobi form of weight $k+n/2$ and extract its functions $f_\gamma$ by \eqref{eq:theta-coefficients}.  Comparing its modular law with that of $\Theta_L$, using the same independence, gives
\[
F|_kg=\rho_L(g)F,\qquad
F=\sum_\gamma f_\gamma\ve_\gamma.
\]
The Fourier conditions just proved give the corresponding bounds on $f_\gamma$.  Thus $F$ belongs to the stated source and maps to the original Jacobi form.  This proves surjectivity.
\end{proof}

For these Jacobi modules the two parity components can be treated separately.  Indeed,
\[
\rho_L(-I)\ve_\gamma=(-1)^{n/2}\ve_{-\gamma},
\]
while $m(-\gamma)=m(\gamma)$ and $h(-\gamma)=h(\gamma)$.  Replacing $\ve_\gamma,\ve_{-\gamma}$ with their sum and difference for each pair of distinct classes $\gamma,-\gamma$, and leaving self-inverse classes unchanged, therefore gives complete systems of exponent conditions on the two $(-I)$-eigenspaces, without changing the total exponent sum.  Applying Theorem~\ref{thm:basis-criterion} on these two eigenspaces gives the following criterion.

\begin{corollary}\label{cor:jacobi-basis}
Let $r=|A_L|$.  Suppose $\varphi_1,\ldots,\varphi_r$ are $\M$-linearly independent homogeneous weak Jacobi forms of weights $k_i$.  They form an $\M$-basis of $J_{*,L}^{\mathrm{w}}$ if and only if
\[
\sum_{i=1}^r k_i
=\frac{rn}{2}-12\sum_{\gamma\in A_L}m(\gamma).
\]
Likewise, $\M$-linearly independent holomorphic Jacobi forms $\psi_i$ of weights $\ell_i$ form an $\M$-basis of $J_{*,L}$ if and only if
\[
\sum_{i=1}^r\ell_i
=\frac{rn}{2}+12\sum_{\gamma\in A_L}\{-Q(\gamma)\}.
\]
\end{corollary}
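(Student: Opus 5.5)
We transport the statement through the theta decomposition and then apply Theorem~\ref{thm:basis-criterion} on each $(-I)$-eigenspace separately. By Theorem~\ref{thm:theta-decomposition}, the map $F\mapsto\sum_\gamma f_\gamma\theta_{L,\gamma}$ is an $\M$-linear isomorphism
\[
M(\rho_L;\cE_L^{\mathrm{w}})\xrightarrow{\sim}J_{*,L}^{\mathrm{w}}.
\]
It is $\M$-linear because multiplying the coefficient vector by a scalar modular form multiplies the Jacobi form by the same form. It shifts every weight by $n/2$. Hence $\varphi_1,\ldots,\varphi_r$ correspond to $\M$-linearly independent homogeneous $F_1,\ldots,F_r\in M(\rho_L;\cE_L^{\mathrm{w}})$ with $\sum_i\wt(F_i)=\sum_ik_i-rn/2$. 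It therefore suffices to show that such a family is an $\M$-basis if and only if $\sum_i\wt(F_i)=12\tr\cE_L^{\mathrm{w}}=-12\sum_\gamma m(\gamma)$. The holomorphic case is identical, with $\tr\cE_L^{\mathrm{hol}}=\sum_\gamma\{-Q(\gamma)\}$.

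The difficulty is that $\rho_L(-I)=(-1)^{n/2}\,\iota$, where $\iota$ is the involution $\ve_\gamma\mapsto\ve_{-\gamma}$. This is not a scalar unless $2A_L=0$, so Theorem~\ref{thm:basis-criterion} does not apply directly. We split $V=\C[A_L]$ as $V^+\oplus V^-$, the eigenspaces of $\iota$. On $V^{\pm}$, $\rho_L(-I)$ acts by the scalar $\pm(-1)^{n/2}$. The change of basis described before the corollary replaces each pair $\ve_\gamma,\ve_{-\gamma}$ with $\gamma\neq-\gamma$ by $\ve_\gamma\pm\ve_{-\gamma}$. Since $m$ (resp.\ $h$) is constant on each such pair, this yields complete systems $\cE^{\pm}$ on $V^{\pm}$ with $\tr\cE^++\tr\cE^-=\tr\cE_L$. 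We also check that the coordinate conditions are equivalent: a vector satisfies the bound on $\ve_\gamma^\dual$ and $\ve_{-\gamma}^\dual$ exactly when it satisfies it on their sum and difference. Consequently
\[
M_k(\rho_L;\cE_L)=M_k(\rho^+;\cE^+)\oplus M_k(\rho^-;\cE^-).
\]
Only one summand is nonzero in each weight, namely the one of the matching parity. The module $M(\rho_L;\cE_L)$ is thus the direct sum of two free modules of ranks $r_\pm=\dim V^\pm$ (Theorem~\ref{thm:freeness}).

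The main step is to reduce a general independent homogeneous family to this splitting. A homogeneous form of weight $k$ satisfies $F=F|_k(-I)=(-1)^k\rho_L(-I)F$, so it takes values in $V^+$ or in $V^-$ according to the parity of $k$. The family therefore splits into $s_+$ forms with values in $V^+$ and $s_-$ forms with values in $V^-$. Independence means the determinant is nonzero (Remark~\ref{rem:determinant-independence}). In a basis adapted to $V^+\oplus V^-$ this determinant is block-shaped, which forces $s_\pm=r_\pm$ and makes each block nonzero. So each subfamily is independent in its own eigenspace module.

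Applying Theorem~\ref{thm:basis-criterion} to each subfamily gives, for $\epsilon=\pm$, that $\sum_{\text{sub}}\wt\ge12\tr\cE^{\epsilon}$, with equality if and only if the subfamily is a basis of its summand. The whole family is a basis if and only if both subfamilies are. Adding the two inequalities shows this holds if and only if the total weight sum equals $12(\tr\cE^++\tr\cE^-)=12\tr\cE_L$: equality in the sum forces equality in each of the two inequalities. Translating back by the shift $n/2$ per form gives both formulas of the corollary.
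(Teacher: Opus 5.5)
Your proposal is correct and follows the same route as the paper. You transport the statement through Theorem~\ref{thm:theta-decomposition}, split $\C[A_L]$ into the two $(-I)$-eigenspaces via $\ve_\gamma\pm\ve_{-\gamma}$ (which preserves the total exponent sum because the bounds at $\gamma$ and $-\gamma$ agree), and apply Theorem~\ref{thm:basis-criterion} on each eigenspace; your splitting of the family and adding of the two weight inequalities just makes explicit what the paper leaves implicit.

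There are two trivial slips. First, $F|_k(-I)=(-1)^kF$ rather than $F$, although your conclusion $F=(-1)^k\rho_L(-I)F$ is right. Second, $s_\pm=r_\pm$ follows more cleanly from the ranks $r_\pm$ of the two free summands than from applying Remark~\ref{rem:determinant-independence}, which is stated for scalar $\rho(-I)$, to the whole family.
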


\begin{remark}\label{rem:half-integral}
The freeness and weight-sum basis criterion also hold for half-integral
weights. Let $\widetilde\Gamma=\operatorname{Mp}_2(\Z)$, and let
$\nu_\eta$ be the character defined by the weight-$\tfrac12$
transformation law of $\eta$.
Suppose that $\widetilde\rho(I,-1)=-I$,
$\widetilde\rho(\widetilde T)$ has finite order, and the central lift of
$-I$ acts as a scalar, where $\widetilde T=(T,1)$.
Then $\rho=\widetilde\rho\otimes\nu_\eta^{-1}$ descends to $\Gammaone$,
since $\nu_\eta(I,-1)=-1$.
For $k\in\tfrac12+\Z$, division by $\eta$ gives an isomorphism
\[
M_k(\widetilde\rho;\cE)
\xrightarrow{\ F\mapsto F/\eta\ }
M_{k-1/2}(\rho;\cE'),
\qquad
\cE'=\{(\ell_i,c_i-\tfrac1{24})\}_{i=1}^r.
\]
Indeed, $\eta$ is nowhere zero on $\Hh$ and has leading term $q^{1/24}$.
These maps commute with multiplication by scalar modular forms.
Theorems~\ref{thm:freeness} and~\ref{thm:basis-criterion} therefore apply.
The weight-sum criterion is unchanged, because
\[
\sum_{i=1}^r\bigl(\wt(F_i)-\tfrac12\bigr)
=12\bigl(\tr\cE-\tfrac r{24}\bigr)
\quad\Longleftrightarrow\quad
\sum_{i=1}^r\wt(F_i)=12\tr\cE.
\]
In particular, Theorem~\ref{thm:invariant-theta} and
Corollary~\ref{cor:invariant-basis} extend to positive definite even
lattices of odd rank, with $k+n/2\in\Z$ in the theta correspondence.
One uses the metaplectic Weil representation and applies the above
reduction on each central eigenspace. The exponent conditions respect
these eigenspaces because the bounds at $\gamma$ and $-\gamma$ agree.
\end{remark}

\subsection{\texorpdfstring{$G$-invariant Jacobi forms}{G-invariant Jacobi forms}}

Theta decomposition is compatible with lattice automorphisms.
We use this compatibility to identify invariant Jacobi modules with
modules of vector-valued modular forms and deduce freeness and the basis criteria.

Let $G\subset\OO(L)$.  Its action on $\C[A_L]$ is
\[
g\cdot\ve_\gamma=\ve_{g\gamma}\qquad(g\in G,\ \gamma\in A_L).
\]
Since $G$ preserves $Q$ and the discriminant pairing, this action commutes with $\rho_L$.  Hence $\cV_L^G:=\C[A_L]^G$ is a $\Gammaone$-subrepresentation.

The action on Jacobi forms is
\[
(g\cdot\varphi)(\tau,z)=\varphi(\tau,g^{-1}z).
\]
We denote its fixed spaces by $J_{k,L}^{\mathrm{w},G}$ and $J_{k,L}^{G}$, and write
\[
J_{*,L}^{\mathrm{w},G}=\bigoplus_kJ_{k,L}^{\mathrm{w},G},
\qquad
J_{*,L}^{G}=\bigoplus_kJ_{k,L}^{G}.
\]
Theta decomposition is equivariant for these actions, since
$\theta_{L,\gamma}(\tau,g^{-1}z)=\theta_{L,g\gamma}(\tau,z)$.

For a $G$-orbit $\cO\subset A_L$, define
\[
\ve_{\cO}=\sum_{\gamma\in\cO}\ve_\gamma,\qquad
-\cO=\{-\gamma:\gamma\in\cO\}.
\]
Negation commutes with $G$, so $-\cO$ is again a $G$-orbit.  The orbit sums form a basis of $\cV_L^G$.  Its dual basis is denoted by $\ve_{\cO}^\dual$, with
$\ve_{\cO}^\dual(\ve_{\cO'})=\delta_{\cO,\cO'}$.

Let $\rho_L^G$ be the restriction of $\rho_L$ to $\cV_L^G$.  For an orbit $\cO$ and any $\gamma\in\cO$, define
\[
m(\cO)=\min_{x\in\gamma+L}Q_L(x),\qquad
Q(\cO)=Q(\gamma)\in\Q/\Z.
\]
These definitions are independent of $\gamma$ since $G$ acts by lattice isometries.

\begin{theorem}\label{thm:invariant-theta}
Theta decomposition restricts to isomorphisms
\begin{align*}
J_{k+n/2,L}^{\mathrm{w},G}
&\cong M_k(\rho_L^G;\cE_{L,G}^{\mathrm{w}}),\\
J_{k+n/2,L}^{G}
&\cong M_k(\rho_L^G;\cE_{L,G}^{\mathrm{hol}}),
\end{align*}
where
\begin{align*}
\cE_{L,G}^{\mathrm{w}}
&=\{(\ve_{\cO}^\dual,-m(\cO)):\cO\in G\backslash A_L\},\\
\cE_{L,G}^{\mathrm{hol}}
&=\{(\ve_{\cO}^\dual,\{-Q(\cO)\}):\cO\in G\backslash A_L\}.
\end{align*}
In particular, both invariant Jacobi modules are free over $\M$ of rank $|G\backslash A_L|$.
\end{theorem}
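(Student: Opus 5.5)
The plan is to restrict the isomorphisms of Theorem~\ref{thm:theta-decomposition} to $G$-fixed vectors on both sides, and then to identify the restricted exponent conditions on $\cV_L^G$ with the systems $\cE_{L,G}^{\mathrm{w}}$ and $\cE_{L,G}^{\mathrm{hol}}$.

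First, equivariance. The relation $\theta_{L,\gamma}(\tau,g^{-1}z)=\theta_{L,g\gamma}(\tau,z)$ shows that the theta map intertwines the permutation action of $G$ on $\C[A_L]$-valued forms with the action $g\cdot\varphi$ on Jacobi forms. Indeed, $\sum_\gamma f_\gamma\theta_{L,\gamma}(\tau,g^{-1}z)=\sum_\gamma f_\gamma\theta_{L,g\gamma}$, and this is the image of $g\cdot F$. By injectivity, a Jacobi form is $G$-invariant exactly when its coefficient vector $F$ takes values in $\cV_L^G$. Since $G$ commutes with $\rho_L$, the modular law for $F$ is the same as the law $F|_k\gamma=\rho_L^G(\gamma)F$ on $\cV_L^G$.

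Second, the cusp conditions. Write an invariant $F=\sum_{\cO}f_{\cO}\ve_{\cO}$, so that $f_\gamma=f_{\cO}=\ve_{\cO}^\dual(F)$ for every $\gamma\in\cO$. The conditions $\ve_\gamma^\dual(F)=O(q^{-m(\gamma)})$ for $\gamma\in\cO$ all reduce to the single condition $f_{\cO}=O(q^{-m(\cO)})$, because $m$ is constant on orbits. The holomorphic case is identical, since $h(\gamma)=\{-Q(\cO)\}$ is also constant on orbits.

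We also need $\ve_{\cO}^\dual\circ\rho_L^G(T)=\e(-Q(\cO))\ve_{\cO}^\dual$. This holds because $\rho_L(T)$ acts on $\ve_{\cO}$ by $\e(-Q(\cO))$. Consequently $\e(-m(\cO))=\e(\{-Q(\cO)\})=\e(-Q(\cO))$, which is what Definition~\ref{def:exponent-conditions} requires. The orbit sums form a basis of $\cV_L^G$, so both systems are complete. This gives the two isomorphisms, graded compatibly with the weight shift $n/2$ and with multiplication by $\M$.

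Third, freeness of rank $|G\backslash A_L|=\dim\cV_L^G$. This should follow from Theorem~\ref{thm:freeness}, but that theorem requires $\rho(-I)$ to be scalar. Here $\rho_L^G(-I)\ve_{\cO}=(-1)^{n/2}\ve_{-\cO}$, so I would split $\cV_L^G$ into its $(\pm1)$-eigenspaces. For $\cO\ne-\cO$ I would use the basis $\ve_{\cO}\pm\ve_{-\cO}$, and I would keep the self-inverse orbits as they are, exactly as in the paragraph before Corollary~\ref{cor:jacobi-basis}. Because $m(-\cO)=m(\cO)$ and $h(-\cO)=h(\cO)$, the dual covectors $\tfrac12(\ve_{\cO}^\dual\pm\ve_{-\cO}^\dual)$ carry a common bound. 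The resulting conditions are therefore equivalent to the orbitwise ones, and they give complete systems on each eigenspace.

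Only weights $k\equiv n/2$ or $k\equiv n/2+1\pmod 2$ survive on the respective eigenspaces. Indeed, applying $F|_k(-I)=\rho(-I)F$ forces the $F$-component in the eigenspace with eigenvalue $(-1)^{n/2}$ to be supported in weights $k\equiv 0\pmod 2$, and the other component in odd weights. Hence each module is free of rank equal to the dimension of its eigenspace, and the ranks add to $|G\backslash A_L|$. The only point I expect to need care is this parity bookkeeping, checking that the direct sum over both eigenspaces reproduces the full graded module. Beyond that, the argument is routine.
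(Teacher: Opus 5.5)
Your proposal is correct and follows essentially the same route as the paper's proof. You restrict the theta isomorphism to $G$-fixed coefficient vectors via $\theta_{L,\gamma}(\tau,g^{-1}z)=\theta_{L,g\gamma}(\tau,z)$, read off the orbitwise bounds, and obtain freeness by pairing $\cO$ with $-\cO$ to split $\cV_L^G$ into $\rho_L^G(-I)$-eigenspaces, then apply Theorem~\ref{thm:freeness} to each eigenspace. One small slip in your parity sentence: for the vector-valued weight $k$, the $(-1)^{n/2}$-eigenspace carries the weights $k\equiv n/2\pmod 2$, as your first sentence there correctly says; it is the Jacobi weight $k+n/2$, not $k$ itself, that is even.
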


\begin{proof}
A Jacobi form is $G$-invariant if and only if its theta coefficients are constant on $G$-orbits.  Thus its coefficient vector takes values in $\cV_L^G$.  Since the $G$-action commutes with $\rho_L$, this is a subrepresentation, and Theorem~\ref{thm:theta-decomposition} gives the asserted exponent conditions.  To apply Theorem~\ref{thm:freeness}, pair the orbits $\cO$ and $-\cO$.  They have the same minimum and quadratic value.  Their sum and difference are eigenvectors of $\rho_L(-I)$ with the same prescribed lower bound, while an orbit fixed by negation already has scalar $(-I)$-action.  The two resulting complete systems describe the even- and odd-weight modules separately.  Theorem~\ref{thm:freeness} applied to these two modules gives total rank $|G\backslash A_L|$.
\end{proof}

\begin{corollary}\label{cor:invariant-basis}
Put $r=|G\backslash A_L|$.  $\M$-linearly independent weak $G$-invariant Jacobi forms of weights $k_1,\ldots,k_r$ form an $\M$-basis if and only if
\[
\sum_i k_i
=\frac{rn}{2}-12\sum_{\cO\in G\backslash A_L}m(\cO).
\]
$\M$-linearly independent holomorphic $G$-invariant Jacobi forms of weights $\ell_i$ form an $\M$-basis if and only if
\[
\sum_i\ell_i
=\frac{rn}{2}+12\sum_{\cO\in G\backslash A_L}\{-Q(\cO)\}.
\]
\end{corollary}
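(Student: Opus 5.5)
The plan is to transport the question to vector-valued modular forms through Theorem~\ref{thm:invariant-theta}, and then apply Theorem~\ref{thm:basis-criterion} separately on the two $(-I)$-eigenspaces. Under theta decomposition, a weak $G$-invariant Jacobi form $\varphi_i$ of weight $k_i$ corresponds to a form $F_i\in M_{k_i-n/2}(\rho_L^G;\cE_{L,G}^{\mathrm w})$. The correspondence is $\M$-linear, since multiplying a Jacobi form by a scalar modular form multiplies each theta coefficient by the same form. Hence $\M$-linear independence and the basis property are preserved in both directions. The target identity then reads
\[
\sum_i\bigl(k_i-\tfrac n2\bigr)=12\tr\cE_{L,G}^{\mathrm w}=-12\sum_{\cO}m(\cO),
\]
which is exactly the displayed condition after adding $rn/2$. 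The holomorphic case is identical, with $\tr\cE_{L,G}^{\mathrm{hol}}=\sum_\cO\{-Q(\cO)\}$.

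The only subtlety is that Theorem~\ref{thm:basis-criterion} is stated for $\rho$ with $\rho(-I)$ scalar, whereas $\rho_L^G(-I)$ acts by $(-1)^{n/2}$ times the permutation $\ve_\cO\mapsto\ve_{-\cO}$. To handle this, I will use the decomposition from the proof of Theorem~\ref{thm:invariant-theta}. For each pair of distinct orbits $\cO\ne-\cO$, replace $\ve_\cO,\ve_{-\cO}$ by $\ve_\cO\pm\ve_{-\cO}$, and keep each self-inverse orbit unchanged. This gives a splitting $\cV_L^G=V^+\oplus V^-$ into $(-I)$-eigenspaces of dimensions $r^+,r^-$. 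The dual covectors to this new basis are $\tfrac12(\ve_\cO^\dual\pm\ve_{-\cO}^\dual)$ restricted to the appropriate eigenspace. Since $m(\cO)=m(-\cO)$ and $Q(\cO)=Q(-\cO)$, each retains the bound of its pair. So we obtain complete systems $\cE^\pm$ on $V^\pm$, these describe exactly the same forms, and $\tr\cE^++\tr\cE^-=\tr\cE_{L,G}$. A form of weight $k$ has parity fixed by $k$, so $M(\rho_L^G;\cE)=M(\rho^+;\cE^+)\oplus M(\rho^-;\cE^-)$, with the two summands living in opposite weight parities.

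Next I reduce to homogeneous families adapted to this splitting. Since each $\varphi_i$ is homogeneous, its $F_i$ lies in exactly one summand, according to the parity of $k_i-n/2$. The family therefore splits into subfamilies of sizes $a^\pm$ with $a^++a^-=r$. Independence forces $a^\pm\le r^\pm$, because each summand is free of rank $r^\pm$; hence $a^\pm=r^\pm$. The whole family is a basis if and only if each subfamily is a basis of its summand. By Theorem~\ref{thm:basis-criterion}, this happens if and only if $\sum_{\pm}\wt=12\tr\cE^\pm$ holds for each sign separately.

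The main obstacle is passing from these two separate equalities to the single summed equality. The inequality part of Theorem~\ref{thm:basis-criterion}, namely that the weight sum is at least $12\tr\cE^\pm$ for independent families, closes the gap. Summing the two inequalities, equality of the total forces equality in each, which yields the equivalence. Alternatively, I could use Lemma~\ref{lem:determinant} directly on the full $r\times r$ determinant in the orbit basis, which is block-diagonal up to the constant change of basis. Then $\det/\eta^{24\tr\cE}$ is the product of the two block quotients, each a nonzero holomorphic scalar modular form, and it is constant if and only if both are.
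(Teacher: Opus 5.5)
Your proposal is correct and follows the paper's route: transport the problem through the theta decomposition of Theorem~\ref{thm:invariant-theta}, split $\cV_L^G$ into $(-I)$-eigenspaces using the sums and differences $\ve_{\cO}\pm\ve_{-\cO}$ (which leaves the total exponent sum unchanged), and apply Theorem~\ref{thm:basis-criterion} on each eigenspace. Your counting argument $a^\pm=r^\pm$ and your use of the inequality $\sum\wt\ge12\tr\cE^\pm$ to combine the two separate equalities into the single summed one spell out steps that the paper leaves implicit.
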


The following principal-part criterion compares prescribed growth
with the standard holomorphic condition. It will be used to pass
from weak to holomorphic Jacobi forms in Section~\ref{sec:Dfamily}.

\begin{proposition}\label{prop:principal-parts}
Let $\rho$ have finite image and $\rho(-I)=I$.  Choose a $T$-eigenbasis
$v_1,\ldots,v_r$, and let
\[
\cE=\{(v_i^\dual,-\mu_i):1\le i\le r\},
\qquad \mu_i\ge0.
\]
Let $\mathcal P(\cE)$ be the space of allowed principal parts
\[
P=\sum_{i=1}^r\sum_{0\le j<\lceil\mu_i\rceil}
c_{i,j}q^{-\mu_i+j}v_i.
\]
For every even $k$, there is a sequence, exact through $\mathcal P(\cE)$,
\begin{equation}\label{eq:principal-part-sequence}
0\longrightarrow M_k(\rho)\longrightarrow M_k(\rho;\cE)
\stackrel{\operatorname{pp}}{\longrightarrow}\mathcal P(\cE)
\stackrel{\operatorname{ob}_k}{\longrightarrow}
S_{2-k}(\rho^\dual)^\dual.
\end{equation}
If
$g=\sum_i\sum_{\nu>0}a_i(\nu)q^\nu v_i^\dual
\in S_{2-k}(\rho^\dual)$, the last map is
\[
\operatorname{ob}_k(P)(g)
=\sum_i\sum_{0\le j<\lceil\mu_i\rceil}
c_{i,j}a_i(\mu_i-j).
\]
In particular, if $S_{2-k}(\rho^\dual)=0$, then
\begin{equation}\label{eq:principal-part-difference}
\dim M_k(\rho;\cE)-\dim M_k(\rho)
=\sum_i\lceil\mu_i\rceil.
\end{equation}
This applies to every even $k\ge2$.
\end{proposition}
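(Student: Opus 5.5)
The plan is the standard Borcherds--Bruinier obstruction argument, built from Petersson-type residue pairing and Serre duality on $X(1)$, in the form of \cite[Theorem~3.1]{Bo99} and \cite[Theorem~1.17]{Br02}. The exactness at $M_k(\rho)$ and at $M_k(\rho;\cE)$ is formal. Since every $-\mu_i\le 0$, the system $\cE$ is weaker than the standard holomorphic one, so $M_k(\rho)\subset M_k(\rho;\cE)$. The principal-part map $\operatorname{pp}$ records the coefficients of $q^{-\mu_i+j}$ with $-\mu_i+j<0$. These are exactly the exponents below $0$ allowed in the $v_i$-coordinate, because the exponents in that coordinate lie in $-\mu_i+\Z$. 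Hence $\ker\operatorname{pp}=M_k(\rho)$.

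The key step is to show that $\operatorname{ob}_k\circ\operatorname{pp}=0$. Take $F\in M_k(\rho;\cE)$ and $g\in S_{2-k}(\rho^\dual)$. The pairing $\langle F,g\rangle=\sum_i g_iF_i$ is a scalar weakly holomorphic modular form of weight $2$ for the trivial character. The meromorphic differential $\langle F,g\rangle\,d\tau$ on $X(1)$ has its only possible pole at the cusp, so its residue there vanishes. Equivalently, the constant term of $\langle F,g\rangle$ is zero, since $\langle F,g\rangle=d(\text{weakly holomorphic weight-$0$ primitive})$ by the standard fact that $M^!_2=D(M^!_0)$. The constant term equals $\sum_i\sum_{\nu}a_i(\nu)c_i(-\nu)$. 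Because $g$ is cuspidal, only $\nu>0$ contributes, and these terms are exactly $\operatorname{ob}_k(\operatorname{pp}F)(g)$.

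The main obstacle is exactness at $\mathcal P(\cE)$: every principal part killed by $\operatorname{ob}_k$ must lift. I would argue by Serre duality, as in Borcherds' proof. Consider the space $W$ of all weakly holomorphic forms whose pole orders are bounded by $\cE$. The image of $\operatorname{pp}$ is a subspace of $\mathcal P(\cE)$, and its annihilator in $\mathcal P(\cE)^\dual$ should be identified with $S_{2-k}(\rho^\dual)$ via the coefficient pairing. To get this identification, realize $M_k(\rho;\cE)$ and $S_{2-k}(\rho^\dual)$ as sections of dual-twisted vector bundles $\mathcal V_k(\cE)$ and $\mathcal V_{2-k}^\dual\otimes\Omega$ on the compactified modular curve. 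Here $\rho$ has finite image, so these bundles live on a finite cover and descend to orbifold bundles. Then compare the long exact cohomology sequence of
\[
0\to\mathcal V_k(\rho)\to\mathcal V_k(\cE)\to(\text{skyscraper at }\infty)\cong\mathcal P(\cE)\to0
\]
with $H^1(\mathcal V_k(\rho))\cong H^0(\mathcal V_{2-k}^\dual\otimes\Omega(\text{cusp-twisted}))^\dual=S_{2-k}(\rho^\dual)^\dual$, and check that the connecting map is $\operatorname{ob}_k$. As an alternative analytic route, Bruinier's approach constructs harmonic Maass--Poincaré series with prescribed principal parts $q^{-\mu_i+j}v_i$. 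Their nonholomorphic parts are governed by $\xi_k$, which maps onto $S_{2-k}(\rho^\dual)$. A linear combination is weakly holomorphic exactly when the $\xi$-images cancel, and this happens exactly when the obstruction vanishes. I would cite \cite[Theorem~1.17]{Br02} for this step rather than reprove it.

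Finally, \eqref{eq:principal-part-difference} follows because $\dim\mathcal P(\cE)=\sum_i\lceil\mu_i\rceil$: in coordinate $i$ the allowed negative exponents are $-\mu_i+j$ for $0\le j<\lceil\mu_i\rceil$. When $S_{2-k}(\rho^\dual)=0$, the map $\operatorname{pp}$ is onto. For even $k\ge2$, the weight $2-k\le0$. A cusp form of nonpositive weight for a finite-image representation vanishes, since $\|g\|^2y^{(2-k)/2}$ summed over components is $\Gammaone$-invariant, bounded, and decays at the cusp. It therefore attains a maximum in $\Hh$, which forces $g$ to be constant, hence $0$ because $g$ is cuspidal.
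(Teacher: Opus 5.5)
Your proposal is correct in outline. It agrees with the paper on the formal parts: $\ker\operatorname{pp}=M_k(\rho)$, the residue-theorem proof of $\operatorname{ob}_k\circ\operatorname{pp}=0$, and the count $\dim\mathcal P(\cE)=\sum_i\lceil\mu_i\rceil$.

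\textbf{Exactness at $\mathcal P(\cE)$.} Here the two routes differ. The paper quotes \cite[Theorem~3.1]{Bo99}. That theorem prescribes the constant term as well, and its obstruction space is all of $M_{2-k}(\rho^\dual)$, not just the cusp forms. The paper then reduces to $S_{2-k}(\rho^\dual)$. For $P\in\ker\operatorname{ob}_k$, the functional $g\mapsto[q^0]\,g(P)$ vanishes on cusp forms, so it factors through constant terms. It can therefore be cancelled by choosing the constant term of the form to be constructed.

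Your Serre-duality version takes $\mathcal V_k(\rho)$ to be the extension whose sections are the holomorphic forms. It then gives $H^1(\mathcal V_k(\rho))\cong S_{2-k}(\rho^\dual)^\dual$ directly, since the twist by $\Omega$ forces vanishing at the cusp. So no constant-term bookkeeping is needed. The price is that you must actually set up duality on the orbifold and identify the connecting map with the residue pairing; you leave both as checks.

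Your stated fallback does not work. \cite[Theorem~1.17]{Br02} is a statement about Weil representations in negative weight. It therefore cannot supply this step for an arbitrary finite-image $\rho$, nor for $k\ge 2$. That is exactly the range where the proposition is used in Section~\ref{sec:Dfamily}, at vector-valued weights $2,4,6$. You should either complete the Serre-duality argument, or cite Borcherds and add the constant-term reduction above.

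\textbf{Vanishing of $S_{2-k}(\rho^\dual)$ for $k\ge2$.} The paper restricts to the finite-index kernel of $\rho$ and uses scalar forms of weight $\le 0$. Your maximum-principle argument is a different route and it also works, with three corrections.
\begin{enumerate}
\item Measure $\|g\|$ in a $\rho^\dual$-invariant Hermitian metric; one exists because the image is finite.
\item The invariant function is $\|g\|^2y^{2-k}$, not $\|g\|^2y^{(2-k)/2}$.
\item The function $\log\|g\|^2+(2-k)\log y$ is subharmonic, since $2-k\le 0$ and $\log y$ is superharmonic. An interior maximum therefore makes $\|g\|^2y^{2-k}$ constant, and decay at the cusp makes it zero. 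The maximum does not directly make $g$ constant.
\end{enumerate}
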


\begin{proof}
Take $F\in M_k(\rho;\cE)$ and $g\in S_{2-k}(\rho^\dual)$.
The scalar pairing $g(F)$ is weakly holomorphic of weight $2$.
Its constant term is zero by the residue theorem, so
\[
\operatorname{ob}_k(\operatorname{pp}F)(g)
=[q^0]\,g(F)=0.
\]
Hence $\operatorname{ob}_k\circ\operatorname{pp}=0$.

Conversely, Borcherds' obstruction theorem \cite[Theorem~3.1]{Bo99} gives a criterion for prescribed nonpositive Fourier terms.
Such terms are realized by a weakly holomorphic form exactly when their constant-term pairings with $M_{2-k}(\rho^\dual)$ vanish.
For $P\in\ker\operatorname{ob}_k$, the functional $g\mapsto[q^0]\,g(P)$ on $M_{2-k}(\rho^\dual)$ vanishes on $S_{2-k}(\rho^\dual)$.
It therefore factors through the constant-term map and can be canceled by choosing a suitable constant term for the desired form.
The cited theorem then gives a weakly holomorphic form $F$ with negative Fourier part $P$.
Since $P\in\mathcal P(\cE)$, this form satisfies the bounds $\cE$.
Thus
\[
\ker\operatorname{ob}_k=\operatorname{im}\operatorname{pp}.
\]
For negative weight and Weil representations, this principal-part criterion is also given in \cite[Theorem~1.17]{Br02}.

The kernel of $\operatorname{pp}$ consists of the holomorphic forms.
The allowed negative exponents in the $i$-th component are
$-\mu_i,-\mu_i+1,\ldots,-\mu_i+\lceil\mu_i\rceil-1$,
so $\dim\mathcal P(\cE)=\sum_i\lceil\mu_i\rceil$.
Finally, a finite-image representation has no cusp forms of
nonpositive weight: on a finite-index kernel its components are
scalar holomorphic forms, which vanish in negative weight, and
are constant in weight zero.  This proves the last assertion.
\end{proof}

\section{Weyl invariant Jacobi forms of lattice index \texorpdfstring{$D_{8d}^{+}(2)$}{D8d+(2)}}\label{sec:Dfamily}

We determine the generator weights of the weak and holomorphic Weyl
invariant Jacobi modules of lattice index $D_{8d}^{+}(2)$ for $d\ge2$.
After computing the Weyl-group orbits on the discriminant group, we
construct a weak Jacobi form from a form on $D_8$. Its theta coefficient
vector and successive derivatives generate a submodule corresponding
to a representation of dimension $2d+1$. We prove that projection to
the remaining three-dimensional representation is surjective with
the induced cusp conditions, which determines the weak generator weights.
For holomorphic forms, we compare principal parts in positive
vector-valued weight and determine the weight-zero dimension by matching
upper and lower bounds. These dimensions give the holomorphic generator
weights and their multiplicities.

\subsection{The lattice and its discriminant representation}

We determine the invariant Weil representation for $D_{8d}^{+}(2)$
using the orbits of $W(D_{8d})$ on its discriminant group.

Fix $d\ge2$, and let $e_1,\ldots,e_{8d}$ be the standard coordinate vectors of $\mathbb R^{8d}$.  Write
\[
D_{8d}=\{x\in\Z^{8d}:\textstyle\sum_i x_i\equiv0\pmod2\},
\qquad h=\tfrac12(1,\ldots,1),
\qquad D_{8d}^{+}=D_{8d}\cup(h+D_{8d}).
\]
The lattice $D_{8d}^{+}$ is even unimodular, with $Q_{D_{8d}^{+}}(h)=d$.  Put $L_d=D_{8d}^{+}(2)$ and $G_d=\OO(D_{8d}^{+})$.

The Weyl group $W(D_{8d})$ consists of coordinate permutations and sign changes in an even number of coordinates.  Its elements act by
\[
g_{\pi,\varepsilon}(x)_i=\varepsilon_i x_{\pi^{-1}(i)},
\qquad
\pi\in S_{8d},\quad \varepsilon_i\in\{1,-1\},\quad\prod_i\varepsilon_i=1.
\]
Every vector in $h+D_{8d}$ has squared norm at least $8d/4=2d\ge4$.
Thus the norm-two roots of $D_{8d}^{+}$ are exactly those of $D_{8d}$,
and $W(D_{8d}^{+})=W(D_{8d})$.
Every lattice automorphism therefore preserves this root system and is a
signed coordinate permutation.
An odd number of sign changes does not preserve $h+D_{8d}$,
whereas an even number does. Hence
\begin{equation}\label{eq:Gd-Weyl}
G_d=\OO(D_{8d}^{+})=W(D_{8d}^{+})=W(D_{8d}).
\end{equation}

The group underlying $L_d$ is $D_{8d}^{+}$, and its bilinear form is twice that of $D_{8d}^{+}$.  Unimodularity gives
\[
A_d:=L_d^\dual/L_d\cong D_{8d}^{+}/2D_{8d}^{+},
\qquad
x\bmod2D_{8d}^{+}\longmapsto x/2+D_{8d}^{+}.
\]
For $x,y\in D_{8d}^{+}$ we have
\begin{equation}\label{eq:Q-identification}
Q_{L_d}(x/2)\equiv\tfrac12Q_{D_{8d}^{+}}(x),
\qquad
(x/2,y/2)_{L_d}\equiv\tfrac12(x,y)_{D_{8d}^{+}}\pmod{\Z}.
\end{equation}
The action on $A_d$ is $g(x/2+D_{8d}^{+})=gx/2+D_{8d}^{+}$.  We call a class \emph{integral} if its representative $x$ lies in $D_{8d}$, and a \emph{spin class} if $x$ lies in $h+D_{8d}$.  These two parts are $G_d$-invariant.

In the following representatives, $a^r$ denotes $r$ repeated coordinates equal to $a$.  The representatives lie in $D_{8d}^{+}/2D_{8d}^{+}$; dividing them by $2$ gives their classes in $A_d$.

\begin{proposition}\label{prop:orbits}
The action of $G_d$ on $A_d$ has the following $2d+4$ orbits.
\begin{enumerate}[label=\textup{(\roman*)}]
\item The integral orbit $\cO_m$, $0\le m\le2d$, has representative
\[
x_m=(0^{8d-2m},1^{2m}),\qquad Q_{D_{8d}^{+}}(x_m)=m,
\]
and cardinality
\[
|\cO_m|=
\begin{cases}
1,&m=0,\\[1mm]
2\binom{8d}{2m},&1\le m<2d,\\[1mm]
\binom{8d}{4d},&m=2d.
\end{cases}
\]
\item The integral orbit $\cO_{\mathrm v}$ has representative $(2,0^{8d-1})$, quadratic value $Q_{D_{8d}^{+}}=2$, and cardinality $1$.
\item The spin orbit $\cO_{\mathrm s}$ has representative $(\tfrac12)^{8d}$, quadratic value $Q_{D_{8d}^{+}}=d$, and cardinality $2^{8d-2}$.
\item The spin orbit $\cO_{\mathrm c}$ has representative $(-\tfrac32,(\tfrac12)^{8d-1})$, quadratic value $Q_{D_{8d}^{+}}=d+1$, and cardinality $2^{8d-2}$.
\end{enumerate}
In particular, $\dim\C[A_d]^{G_d}=2d+4$.
\end{proposition}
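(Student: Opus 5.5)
The plan is to work in $D_{8d}^{+}/2D_{8d}^{+}$, using the identification $A_d\cong D_{8d}^{+}/2D_{8d}^{+}$ above, and to classify $G_d$-orbits separately on the integral part $D_{8d}/2D_{8d}^{+}$ and on the spin part $(h+D_{8d})/2D_{8d}^{+}$. Note first that $|A_d|=2^{8d}$, since $D_{8d}^{+}$ has rank $8d$. Because $2D_{8d}^{+}\supset 2D_{8d}$ with index $2$, and $2h=(1^{8d})$, the integral part has $2^{8d-1}$ classes and so does the spin part. The cardinalities listed in the statement should add up to these totals. This gives a counting check that makes the classification complete once I show the listed orbits are distinct and have the stated sizes.

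\textbf{Integral part.} A class of $x\in D_{8d}$ modulo $2D_{8d}^{+}$ is first reduced modulo $2\Z^{8d}\cap\ldots$. More concretely, modulo $2D_{8d}$ I may take coordinates in $\{0,1\}$, except for a parity correction handled by $(2,0,\ldots,0)\notin 2D_{8d}$. Adding $2h=(1^{8d})\in 2D_{8d}^{+}$ complements the support. So for $x\bmod 2\Z^{8d}$ with even support size $2m$, I can assume $2m\le 4d$. When $2m>0$, the class modulo $2D_{8d}$ is determined by the support up to the ambiguity $(2,0,\ldots)$, and sign changes on a support coordinate absorb that ambiguity: $1\mapsto -1=1-2$, with the even-sign constraint handled by flipping a second support coordinate, which changes $x$ only by an element of $2D_{8d}$, or by pairing flips. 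For $m=0$ one gets the two classes $0$ and $\cO_{\mathrm v}$, both fixed by $G_d$. The count is then: a support of size $2m$ with $0<2m<4d$ together with its complement gives $2\binom{8d}{2m}$ classes; wait, complements are identified, so I will carefully recount. The sizes in the statement presumably arise because modulo $2D_{8d}^{+}$ each support set $S$ and its complement give the same class, while the $(2,0,\ldots)$ twist doubles the count. I would verify this by checking $\sum_m|\cO_m|+1=2^{8d-1}$ via $\sum_{j}\binom{8d}{2j}=2^{8d-1}$. Permutations act transitively on supports of a fixed size, which gives transitivity within each $\cO_m$. The orbits are distinguished by $\min(2m,8d-2m)$ together with $Q$.

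\textbf{Spin part.} Every spin class has a representative with all coordinates $\pm\tfrac12$, reduced modulo $2\Z^{8d}$. Within $2D_{8d}^{+}$, changing a single coordinate by $\pm2$ is not allowed alone, but changing two coordinates is. Thus a class is determined by its sign vector together with the parity of a $\pm\tfrac32$ shift. Even sign changes act transitively on sign vectors with a fixed parity of minus signs. The invariant separating $\cO_{\mathrm s}$ from $\cO_{\mathrm c}$ is $Q\bmod 2$, namely $d$ versus $d+1$, and by \eqref{eq:Q-identification} this is a genuine invariant of the $A_d$-class. Each orbit then has size $2^{8d-1}/2=2^{8d-2}$.

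\textbf{Main obstacle.} The delicate step is the bookkeeping of the integral classes: showing exactly when $(2,0^{8d-1})$ and the complement operation $x\mapsto 2h-x$ identify or separate classes, and confirming that the cardinalities match the statement, especially the factor $2$ and the middle case $m=2d$. I would settle this by computing $Q_{D_{8d}^{+}}$ modulo $2$ as an invariant and finishing with the total count. Finally, the dimension $\dim\C[A_d]^{G_d}$ equals the number of orbits, $(2d+1)+1+2=2d+4$.
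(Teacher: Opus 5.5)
Your outline follows the paper's proof. You split $A_d$ into integral and spin classes, write an integral vector as $x=p+2z$ with $p\in\{0,1\}^{8d}$ and a parity bit $\sum_i z_i\bmod 2$, and complement $p$ using $2h$. You reduce spin classes to sign vectors and separate $\cO_{\mathrm s}$ from $\cO_{\mathrm c}$ by the quadratic value. However, the step that merges the two parity classes with the same support into one orbit is wrong as written. Flipping the sign of one odd coordinate $j$ changes the class by the twist $2e_j$, as intended. If you compensate by flipping a second odd coordinate $k$, the total change is $-2e_j-2e_k\in 2D_{8d}$, so the class does not move at all. (Also, that second flip alone changes $x$ by $-2e_k\notin 2D_{8d}$, contrary to what you say.) In general, even sign changes confined to odd coordinates never change the parity bit. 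The companion flip must be on a coordinate where $x$ is zero; such a coordinate exists because $2m\le 4d<8d$. The paper does exactly this: it flips the entry $2$ and one entry $1$ of the twisted representative $(2,0^{8d-2m-1},1^{2m})$.

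The cardinalities are also not established; a total-count identity only checks the sum. You need that $(\{p,\bar p\},\sum_i z_i\bmod 2)$ parametrizes integral classes bijectively. Subtracting $2h$ sends $p+2z$ to $\bar p+2(z-\bar p)$, which preserves the parity because $|\bar p|$ is even. The two parities give different classes because $2e_1\notin 2D_{8d}^{+}$. This yields $2\binom{8d}{2m}$ for $1\le m<2d$ and $\binom{8d}{4d}$ for $m=2d$ directly. Alternatively, if you prove these only as upper bounds, your identity $\sum_j\binom{8d}{2j}=2^{8d-1}$ forces equality, but you must state that inference.

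In the spin part, dividing $2^{8d-1}$ by $2$ presupposes that the two orbits have equal size. The paper proves $|\cO_{\mathrm s}|=2^{8d-2}$ directly: two sign vectors with $\prod_i\varepsilon_i=1$ give the same class exactly when they are equal or opposite, since elements of $2D_{8d}^{+}$ have all coordinates even or all odd. It then gets $|\cO_{\mathrm c}|=|\cO_{\mathrm s}|$ from translation by the $G_d$-fixed class $\cO_{\mathrm v}$, a $G_d$-equivariant bijection carrying $h$ to $h-2e_1$.

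Finally, $Q_{D_{8d}^{+}}\bmod 2$ is not the right invariant for the integral bookkeeping you flag as the main obstacle. It equals $0$ on both $\cO_0$ and $\cO_{\mathrm v}$ and only records $m\bmod 2$ on $\cO_m$. The integral orbits are separated by the number of odd coordinates, up to $2m\leftrightarrow 8d-2m$. $\cO_0$ and $\cO_{\mathrm v}$ are distinct orbits simply because they are two different fixed points.
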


\begin{proof}
We have $2D_{8d}^{+}=2D_{8d}\cup(2h+2D_{8d})$.
Let $x\in D_{8d}$ and write $x=p+2z$, where $p_i\in\{0,1\}$.
The number of entries equal to $1$ in $p$ is even.
For fixed $p$, the class modulo $2D_{8d}$ depends only on
$\sum_i z_i\pmod2$.
Subtracting $2h$ replaces $p$ by its complement $(1,\ldots,1)-p$ and preserves this parity.
Choose the complement, if necessary, so that $p$ has $2m\le4d$ entries equal to $1$.
After permuting coordinates, the class has one of the representatives
\[
(0^{8d-2m},1^{2m}),\qquad
(2,0^{8d-2m-1},1^{2m}).
\]
For $m=0$, these give $\cO_0$ and $\cO_{\mathrm v}$.
For $m>0$, change the signs of the first coordinate and one coordinate equal to $1$ in the second representative.
The result differs from the first representative by an element of $2D_{8d}$.
Thus both classes belong to $\cO_m$.

Signed permutations preserve the number of odd coordinates.
Subtracting $2h$ replaces a count $2m$ by $8d-2m$.
Hence different values of $m$ give different orbits.
For $1\le m<2d$, there are $\binom{8d}{2m}$ choices of the odd coordinates and two classes for each choice.
For $m=2d$, complementation pairs the choices of $4d$ coordinates.
Each pair gives two classes, so $|\cO_{2d}|=\binom{8d}{4d}$.
For $m=0$, the two classes are $0$ and $2e_1$.
They are distinct, and every signed coordinate vector $\pm2e_i$ is congruent to $2e_1$ modulo $2D_{8d}$.
Both orbits are therefore singletons.

Now let $x=h+y$ with $y\in D_{8d}$.
Change the signs of the coordinates for which $y_i$ is odd.
There are an even number of such coordinates.
The resulting vector has the form $h+2z$.
Its class modulo $2D_{8d}$ is $h$ or $h+2e_1$, according to the parity of $\sum_i z_i$.
Since $4e_1\in2D_{8d}$, the latter class also equals $h-2e_1$.
Thus every spin class belongs to $\cO_{\mathrm s}$ or $\cO_{\mathrm c}$.

The orbit of $h$ is represented by
$(\varepsilon_1/2,\ldots,\varepsilon_{8d}/2)$ with $\prod_i\varepsilon_i=1$.
Two such vectors represent the same class exactly when they are equal or opposite.
Indeed, their difference has entries in $\{0,1,-1\}$, whereas an element of $2D_{8d}^{+}$ has all coordinates even or all coordinates odd.
There are therefore $2^{8d-2}$ classes in $\cO_{\mathrm s}$.
Translation by the fixed class $\cO_{\mathrm v}$ sends this orbit to $\cO_{\mathrm c}$.
The two spin orbits have the same size.
Their quadratic values differ by $1/2$ modulo $\Z$, by \eqref{eq:Q-identification}, so they are distinct.
They are also distinct from the integral orbits.
Finally, squaring the displayed representatives gives the stated quadratic values.
\end{proof}

By Theorem~\ref{thm:invariant-theta}, weak $G_d$-invariant Jacobi forms correspond to vector-valued modular forms for the representation
\[
\rho_d:=\rho_{L_d}^{G_d}
\quad\text{on}\quad
\cV_d:=\C[A_d]^{G_d}.
\]
We now determine the Fourier bounds on their theta coefficients.
For each orbit $\cO$, the covector $\ve_{\cO}^\dual\in\cV_d^\dual$ extracts its orbit-sum coefficient:
\[
\ve_{\cO}^\dual\!\left(\sum_{\cO'}f_{\cO'}\ve_{\cO'}\right)=f_{\cO}.
\]
The representatives in Proposition~\ref{prop:orbits} have minimum norm in their classes.
For an integral orbit $\cO_m$, choose the parity pattern with $2m\le4d$ odd coordinates, using complementation when necessary.
Each odd coordinate has absolute value at least one, and $x_m$ attains the resulting norm bound.
The nonzero class of even vectors has shortest representative $2e_1$.
For spin classes, all coordinates have absolute value at least $1/2$.
Every representative of a class in $\cO_{\mathrm c}$ has at least one coordinate of absolute value at least $3/2$.
The displayed representative attains the lower bound.
Hence the minima in the scaled lattice are
\[
m(\cO_m)=\frac m2,\qquad m(\cO_{\mathrm v})=1,\qquad
m(\cO_{\mathrm s})=\frac d2,\qquad m(\cO_{\mathrm c})=\frac{d+1}{2}.
\]
Define the complete system of exponent conditions on $\cV_d$ by
\begin{equation}\label{eq:Ed-weak}
\begin{aligned}
\cE_d^{\mathrm{w}}={}&\{(\ve_{\cO_m}^\dual,-m/2):0\le m\le2d\}\\
&\cup\{(\ve_{\cO_{\mathrm v}}^\dual,-1),
(\ve_{\cO_{\mathrm s}}^\dual,-d/2),
(\ve_{\cO_{\mathrm c}}^\dual,-(d+1)/2)\}.
\end{aligned}
\end{equation}
Thus a form
\[
F=f_0\ve_{\cO_0}+\sum_{m=1}^{2d}f_m\ve_{\cO_m}
 +f_{\mathrm v}\ve_{\cO_{\mathrm v}}
 +f_{\mathrm s}\ve_{\cO_{\mathrm s}}
 +f_{\mathrm c}\ve_{\cO_{\mathrm c}}
\]
belongs to $M_k(\rho_d;\cE_d^{\mathrm{w}})$ exactly when
\[
f_m=O(q^{-m/2})\ (0\le m\le2d),\quad
f_{\mathrm v}=O(q^{-1}),\quad
f_{\mathrm s}=O(q^{-d/2}),\quad
f_{\mathrm c}=O(q^{-(d+1)/2}).
\]
Theta decomposition identifies this space with $J_{k+4d,L_d}^{\mathrm{w},G_d}$.

Since $Q_{L_d}(A_d)\subset\frac12\Z/\Z$, we have $\rho_d(T)^2=I$.  Also $A_d$ is $2$-elementary, so the Weil formula gives $\rho_d(S)^2=I$.  Thus $\rho_d$ factors through
\[
\langle S,T:S^2=T^2=(ST)^3=1\rangle\cong S_3.
\]
Let $\mathbf1$ denote the trivial representation of $S_3$.  Identify the images of $S$ and $T$ with $(12)$ and $(23)$, respectively.  On
\[
\{(z_1,z_2,z_3)\in\C^3:z_1+z_2+z_3=0\},
\]
the standard representation $\sigma$ is given by
\[
\sigma(S)(z_1,z_2,z_3)=(z_2,z_1,z_3),
\qquad
\sigma(T)(z_1,z_2,z_3)=(z_1,z_3,z_2).
\]
These formulas define an irreducible two-dimensional representation.  We view $\mathbf1$ and $\sigma$ as representations of $\Gammaone$ through this quotient.

Set
\[
U_d=
\Span\{\ve_{\cO_0}-\ve_{\cO_{\mathrm v}},
\ve_{\cO_{\mathrm s}},
\ve_{\cO_{\mathrm c}}\}.
\]
For a convenient normalization put
\[
u_0=2^{4d-2}(\ve_{\cO_0}-\ve_{\cO_{\mathrm v}}).
\]
Choose $u_1$ from $\ve_{\cO_{\mathrm s}},\ve_{\cO_{\mathrm c}}$ so that $Tu_1=u_1$, and let $u_2$ be the other vector.  Explicitly,
\[
(u_1,u_2)=
\begin{cases}
(\ve_{\cO_{\mathrm s}},\ve_{\cO_{\mathrm c}}),&d\text{ even},\\
(\ve_{\cO_{\mathrm c}},\ve_{\cO_{\mathrm s}}),&d\text{ odd}.
\end{cases}
\]

\begin{lemma}\label{lem:U-matrices}
The subspace $U_d$ is invariant under $\rho_d$.  On the basis $u_0,u_1,u_2$ the generators act by
\[
T:u_0\mapsto u_0,
\quad u_1\mapsto u_1,
\quad u_2\mapsto-u_2,
\]
and
\begin{align*}
S u_0&=\frac12u_1+\frac12u_2,\\
S u_1&=u_0+\frac12u_1-\frac12u_2,\\
S u_2&=u_0-\frac12u_1+\frac12u_2.
\end{align*}
Consequently,
\[
U_d=U_d^{(1)}\oplus U_d^{(2)},
\]
where
\[
U_d^{(1)}=\C(u_0+u_1)
\]
is the trivial representation and
\[
U_d^{(2)}=\Span\{2u_0-u_1,u_2\}
\]
is the irreducible two-dimensional standard representation of $S_3$.
\end{lemma}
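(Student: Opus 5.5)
The plan is to compute $\rho_d(T)$ and $\rho_d(S)$ directly on the three vectors, using the explicit formulas for $\rho_L$ after Theorem~\ref{thm:theta-decomposition}. Here $n=8d$, so $\e(n/8)=1$, and $|A_d|=2^{8d}$. For an orbit $\cO$ this gives
\[
\rho_d(S)\ve_{\cO}=2^{-4d}\sum_{\delta\in A_d}\Bigl(\sum_{\gamma\in\cO}\e((\gamma,\delta))\Bigr)\ve_\delta .
\]
By \eqref{eq:Q-identification}, the pairing is $\tfrac12(x,y)_{D_{8d}^+}$ and $Q_{L_d}(x/2)\equiv\tfrac12Q_{D_{8d}^+}(x)$.

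\emph{The action of $T$.} The quadratic values of $\cO_0$ and $\cO_{\mathrm v}$ are $0$ and $1\equiv0$. Those of $\cO_{\mathrm s}$ and $\cO_{\mathrm c}$ are $d/2$ and $(d+1)/2$. So $T$ fixes $u_0$ and acts on $\ve_{\cO_{\mathrm s}},\ve_{\cO_{\mathrm c}}$ by the signs $(-1)^d$ and $(-1)^{d+1}$. This is exactly the stated choice of $u_1$ and $u_2$.

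\emph{The action of $S$ on $u_0$.} Since $S\ve_0=2^{-4d}\sum_\delta\ve_\delta$, we get $S\ve_{\cO_0}=2^{-4d}\sum_{\cO}\ve_{\cO}$. For $\ve_{\cO_{\mathrm v}}$, take $\gamma=2e_1/2$; then $\e((\gamma,\delta))=\e(y_1)$ for $\delta=y/2$. This equals $1$ on integral classes and $-1$ on spin classes. Subtracting, all integral orbit sums cancel, and
\[
Su_0=2^{4d-2}\cdot2^{1-4d}(\ve_{\cO_{\mathrm s}}+\ve_{\cO_{\mathrm c}})=\tfrac12(u_1+u_2).
\]

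\emph{The action of $S$ on $u_1,u_2$.} This is the main obstacle, since we need character sums over the spin orbits. I would use the description from the proof of Proposition~\ref{prop:orbits}. The spin classes are $h+2z$ modulo $2D_{8d}^+$, with the parity of $\sum z_i$ separating $\cO_{\mathrm s}$ from $\cO_{\mathrm c}$. So $\cO_{\mathrm s}=h+K$ and $\cO_{\mathrm c}=h+2e_1+K$, where $K$ is the subgroup of integral classes $2z$ with $\sum z_i$ even, and $|K|=2^{8d-2}$. A coset sum then evaluates as
\[
\sum_{\gamma\in\cO}\e((\gamma,\delta))=\e((\gamma_0,\delta))\,|K|\,\mathbf 1_{\delta\in K^\perp},
\]
so only $\delta\in K^\perp$ contribute. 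I would check that $K^\perp$ consists of the four classes $0$, $\cO_{\mathrm v}$, $h$, and $h+2e_1$; this also follows by counting, since $|K^\perp|=2^{8d}/|K|=4$. Evaluating the phases $\e((h,\delta))$ and $\e((h+2e_1,\delta))$ at these four classes, using $Q(h)=d$ and $(h,2e_1)=1$, gives
\[
S\ve_{\cO_{\mathrm s}}=2^{4d-2}\bigl(\ve_{\cO_0}-\ve_{\cO_{\mathrm v}}\bigr)\pm\tfrac12\ve_{\cO_{\mathrm s}}\pm\tfrac12\ve_{\cO_{\mathrm c}},
\]
and similarly for $\cO_{\mathrm c}$. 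The spin orbits are single classes in $K^\perp$ only through their representatives, so their coefficients carry the factor $|\cO|^{-1}$ implicitly via orbit sums. I would track the parity of $d$ carefully to recover the signs stated in the lemma. Invariance of $U_d$ follows at once, since $S$ and $T$ generate $\Gammaone$.

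\emph{The decomposition.} Using the matrices, I would check directly that $S(u_0+u_1)=u_0+u_1$ and $T(u_0+u_1)=u_0+u_1$, so $U_d^{(1)}$ is trivial. Next I would check that $S$ and $T$ preserve $\Span\{2u_0-u_1,u_2\}$. On this plane $T$ has the distinct eigenvalues $1$ and $-1$, and $S$ does not preserve the eigenline $\C u_2$. Hence the plane is irreducible. Its matrices satisfy $S^2=T^2=1$, so it factors through $S_3$ and must be the standard representation $\sigma$. Finally, $U_d^{(1)}$ and $U_d^{(2)}$ together span the three-dimensional space $U_d$, which gives the direct sum.
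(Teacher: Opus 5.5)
Your treatment of the $T$-action, of $Su_0$, and of the final splitting $U_d=U_d^{(1)}\oplus U_d^{(2)}$ is correct and matches the paper. The gap is in the step you flag as the main obstacle: evaluating $S\ve_{\cO_{\mathrm s}}$ through a coset decomposition $\cO_{\mathrm s}=h+K$, working in $D_{8d}^+/2D_{8d}^+$.

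\textbf{The subgroup you name is trivial.} The classes of $2z$ with $\sum_i z_i$ even are all zero, because such $z$ lie in $D_{8d}$ and so $2z\in 2D_{8d}\subset 2D_{8d}^+$. The proof of Proposition~\ref{prop:orbits} only says that a suitable element of $G_d$ moves a spin class to $h$ or $h+2e_1$. The orbit is produced by sign changes, not by translations.

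\textbf{No subgroup can work.} Let $\gamma_0$ be the class of $h$. Since $Q_{L_d}$ is constant on $\cO_{\mathrm s}$, the identity $Q(\gamma_0+k)=Q(\gamma_0)+Q(k)+(\gamma_0,k)$ would force $Q(k)\equiv-(\gamma_0,k)$ on $K$. Hence $(k,k')\equiv0$ for all $k,k'\in K$. An isotropic subgroup for the nondegenerate discriminant form has order at most $2^{4d}<2^{8d-2}$. Concretely, $\cO_{\mathrm s}-h=\{\mathbf 1_J:|J|\text{ even}\}$ is not closed under addition: if $|J\cap J'|$ is odd, then $\mathbf 1_J+\mathbf 1_{J'}\equiv\mathbf 1_{J\triangle J'}+2e_1$, which lies in $\cO_{\mathrm c}-h$.

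\textbf{Your own output shows the inconsistency.} A coset computation would make $S\ve_{\cO_{\mathrm s}}$ supported on only four classes, with coefficient $2^{-4d}|K|=2^{4d-2}$ at the single class $h$. This contradicts the $G_d$-invariance of $S\ve_{\cO_{\mathrm s}}$. Your remark about an implicit factor $|\cO|^{-1}$ does not repair this: it would give $2^{-4d}$, not $\tfrac12$. The signs are also left undetermined. The true $S\ve_{\cO_{\mathrm s}}$ has coefficient $\tfrac12$ at every one of the $2^{8d-2}$ classes of $\cO_{\mathrm s}$ and $-\tfrac12$ at every class of $\cO_{\mathrm c}$.

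What is missing is a genuine evaluation of the orbit sum. The paper parametrizes $\cO_{\mathrm s}$ by the vectors $\tfrac12(\varepsilon_1,\ldots,\varepsilon_{8d})$ with $\prod_i\varepsilon_i=1$, each class counted twice. It inserts the indicator $(1+\prod_a\varepsilon_a)/2$ to drop the constraint. At an integral $y$ the sum becomes $\tfrac14\{\prod_a2\cos(\pi y_a/2)+\prod_a2i\sin(\pi y_a/2)\}$. This vanishes at $x_m$ for $1\le m\le2d$ and equals $\pm2^{8d-2}$ at $0$ and $2e_1$. At spin classes the analogous product gives $2^{4d-1}$, using $\prod_i\delta_i=1$ and $i^{8d}=1$. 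Then $\cO_{\mathrm c}=\cO_{\mathrm s}+\cO_{\mathrm v}$ flips the spin signs.

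Alternatively, you can bypass most of the computation, as follows.
\begin{enumerate}[label=\textup{(\alph*)}]
\item Let $\tau$ be translation by the class of $\cO_{\mathrm v}$, and $M$ multiplication by its character ($+1$ on integral classes, $-1$ on spin classes). Then $S\tau=MS$, so $S\ve_{\cO_{\mathrm c}}=MS\ve_{\cO_{\mathrm s}}$.
\item Together with $S^2=I$ on $\cV_d$ and your formula for $Su_0$, this gives $S(\ve_{\cO_{\mathrm s}}+\ve_{\cO_{\mathrm c}})=2u_0$. Hence the integral part of $S\ve_{\cO_{\mathrm s}}$ is exactly $u_0$.
\item By $G_d$-invariance, the spin part is $\alpha\ve_{\cO_{\mathrm s}}+\beta\ve_{\cO_{\mathrm c}}$. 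The relation $S^2=I$ forces $\beta=-\alpha$ and $\alpha=\pm\tfrac12$.
\item On the now invariant $U_d$, $\tr(ST)=0$ and $(ST)^3=I$ force the eigenvalues $1,\e(1/3),\e(2/3)$. Hence $\det(ST)=2\alpha=1$, so $\alpha=\tfrac12$.
\end{enumerate}
Either way, a real computation must replace the coset shortcut.
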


\begin{proof}
The action of $T$ follows from \eqref{eq:Q-identification}.
We compute the action of $S$ by finite Fourier sums.
Put $v=(2,0^{8d-1})$.
The character
\[
x/2+D_{8d}^{+}\longmapsto\e((v/2,x/2)_{L_d})
\]
is $1$ on integral classes and $-1$ on spin classes.
Hence
\[
S(\ve_{\cO_0}-\ve_{\cO_{\mathrm v}})
=2^{1-4d}(\ve_{\cO_{\mathrm s}}+\ve_{\cO_{\mathrm c}}),
\qquad
Su_0=\tfrac12(\ve_{\cO_{\mathrm s}}+\ve_{\cO_{\mathrm c}}).
\]

Let $y\in D_{8d}$ represent an integral class.
Each class of $\cO_{\mathrm s}$ is represented by two opposite sign vectors.
The coefficient sum over $\cO_{\mathrm s}$ is therefore
\begin{align*}
&\frac12\sum_{\substack{\varepsilon_a=\pm1\\\prod_a\varepsilon_a=1}}
 \exp\!\left(\frac{\pi i}{2}\sum_a\varepsilon_a y_a\right)\\
&\qquad=\frac14\left\{
 \prod_{a=1}^{8d}2\cos(\pi y_a/2)
 +\prod_{a=1}^{8d}2i\sin(\pi y_a/2)\right\}.
\end{align*}
Here we used the indicator $(1+\prod_a\varepsilon_a)/2$ to remove the restriction on the signs.
For $y=x_m$ with $1\le m\le2d$, one coordinate is odd and another is zero.
Thus both products vanish.
At $y=0$ and $y=v$, the sums are $2^{8d-2}$ and $-2^{8d-2}$.
After division by the Weil normalization $2^{4d}$, the integral part of $S\ve_{\cO_{\mathrm s}}$ is $u_0$.

For the spin coefficients, fix
$y=(\delta_1/2,\ldots,\delta_{8d}/2)$ with $\prod_i\delta_i=1$.
The same indicator gives
\begin{align*}
\sum_{\gamma\in\cO_{\mathrm s}}\e((\gamma,y/2+D_{8d}^{+})_{L_d})
&=\frac12\sum_{\substack{\varepsilon_i=\pm1\\\prod_i\varepsilon_i=1}}
 \exp\!\left(\frac{\pi i}{4}\sum_i\varepsilon_i\delta_i\right)\\
&=\frac14\left\{\prod_i\left(e^{\pi i\delta_i/4}+e^{-\pi i\delta_i/4}\right)
+\prod_i\left(e^{\pi i\delta_i/4}-e^{-\pi i\delta_i/4}\right)\right\}\\
&=2^{4d-1}.
\end{align*}
The last equality uses $\prod_i\delta_i=1$ and $i^{8d}=1$.
Since $\cO_{\mathrm c}=\cO_{\mathrm s}+\cO_{\mathrm v}$, the sum at a class in $\cO_{\mathrm c}$ has the opposite sign.
Dividing by $2^{4d}$ gives
\[
S\ve_{\cO_{\mathrm s}}=u_0+\tfrac12\ve_{\cO_{\mathrm s}}-\tfrac12\ve_{\cO_{\mathrm c}},
\qquad
S\ve_{\cO_{\mathrm c}}=u_0-\tfrac12\ve_{\cO_{\mathrm s}}+\tfrac12\ve_{\cO_{\mathrm c}}.
\]
For the second identity, translation by $\cO_{\mathrm v}$ changes the signs of the spin coefficients and leaves the integral coefficients unchanged.
These formulas give the stated action on $u_0,u_1,u_2$ for both parities of $d$.

The line $\C(u_0+u_1)$ is fixed by $S$ and $T$.
The span of $2u_0-u_1,u_2$ is invariant, with
\begin{align*}
T(2u_0-u_1)&=2u_0-u_1,& Tu_2&=-u_2,\\
S(2u_0-u_1)&=-\tfrac12(2u_0-u_1)+\tfrac32u_2,&
Su_2&=\tfrac12(2u_0-u_1)+\tfrac12u_2.
\end{align*}
Neither $T$-eigenline is preserved by $S$.
This representation is irreducible, so it is the standard representation $\sigma$ of $S_3$.
\end{proof}

\subsection{The weak case}

We determine the generator weights of the weak Jacobi module by constructing
a cyclic summand of dimension $2d+1$ from a weak Jacobi form on $D_8$.
We prove that projection to the remaining three-dimensional representation
is surjective with the induced cusp conditions, and then apply
Theorem~\ref{thm:rank3-classification}.

For an even unimodular lattice $K$, write
\[
\Theta_K(\tau,z)=\sum_{x\in K}q^{Q_K(x)}\e((x,z)_K).
\]
We identify $D_8$ with its usual sublattice in $E_8=D_8^+$ and with the sublattice in the first eight coordinates of $D_{16}^+$.  The notation $\Theta_K|_{D_8}$ means restriction of the elliptic variable along these inclusions.  Here $W(D_8)$ has the same signed-permutation description in eight coordinates.  We use the $W(D_8)$-invariant weak Jacobi form
\begin{equation}\label{eq:D8-seed}
\widetilde\varphi_{-4,1}^{D_8}
=
\frac{E_4\,\Theta_{E_8}|_{D_8}-\Theta_{D_{16}^+}|_{D_8}}{\Delta}
\in J_{-4,D_8}^{\mathrm{w},W(D_8)}.
\end{equation}
This is the form denoted with a tilde in \cite[Section~3.1.2]{AG20}; it is $W(D_8)$-invariant but not $\OO(D_8)$-invariant.  The Weyl invariance is sufficient for the construction below.

There is an explicit isometric embedding
\[
\jmath_d:L_d\hookrightarrow D_8^{\oplus d}.
\]
On each block of eight coordinates it is given by
\begin{equation}\label{eq:jmath}
(x_1,\ldots,x_8)
\longmapsto
(x_1+x_2,x_1-x_2,\ldots,x_7+x_8,x_7-x_8).
\end{equation}
The block matrix $B$ in \eqref{eq:jmath} satisfies $B^{\mathsf t}B=2I_8$.  In either coset $D_{8d}$ or $h+D_{8d}$, the output coordinates are integral, and their sum in each block is $2(x_1+x_3+x_5+x_7)\in2\Z$.  Thus the image lies in $D_8^{\oplus d}$, and the map is isometric for the scaled form on $L_d$.

For a Jacobi form on $D_8^{\oplus d}$, its pullback is
\[
(\jmath_d^*\varphi)(\tau,z)=\varphi(\tau,\jmath_d z),
\]
where $\jmath_d$ is extended linearly to the complexified lattices.  For $z^{(1)},\ldots,z^{(d)}\in D_8\otimes\C$, define the product
\[
\varphi_d(\tau,z^{(1)},\ldots,z^{(d)})
=\prod_{a=1}^d\widetilde\varphi_{-4,1}^{D_8}(\tau,z^{(a)})
\in J_{-4d,D_8^{\oplus d}}^{\mathrm{w}}.
\]
To obtain a $G_d$-invariant form, sum the pullback over $G_d$:
\begin{equation}\label{eq:Psi-d}
\Psi_d=\sum_{g\in G_d}g\cdot\jmath_d^*\varphi_d
\in J_{-4d,L_d}^{\mathrm{w},G_d}.
\end{equation}
Let
\[
\Phi_d\in M_{-8d}(\rho_d;\cE_d^{\mathrm{w}})
\]
be the theta coefficient vector of $\Psi_d$ in \eqref{eq:Psi-d}.

\begin{proposition}\label{prop:Phi-leading}
There are nonzero constants $a_0,\ldots,a_{2d}$ such that
\begin{align}\label{eq:Phi-leading}
\Phi_d
&=(a_0+O(q))(\ve_{\cO_0}+\ve_{\cO_{\mathrm v}})\\
&\quad+
\sum_{m=1}^{2d}
\left(a_mq^{-m/2}+O(q^{-m/2+1})\right)\ve_{\cO_m}.
\nonumber
\end{align}
In particular,
\[
V_d^{\mathrm{cyc}}
:=V(\Phi_d)
=
\Span\{\ve_{\cO_0}+\ve_{\cO_{\mathrm v}},
\ve_{\cO_1},\ldots,\ve_{\cO_{2d}}\}
\]
has dimension $2d+1$.
\end{proposition}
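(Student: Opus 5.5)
The plan is to read off everything from the elliptic Fourier expansion of $\Psi_d$, and then use the $\rho_d$-invariance of the value span (Lemma~\ref{lem:value-span}) together with the computation in Lemma~\ref{lem:U-matrices} to force the relation between the $\cO_0$- and $\cO_{\mathrm v}$-coefficients. By \eqref{eq:theta-coefficients}, the coefficient $f_\cO$ at an orbit $\cO$ is $q^{-Q(\ell)}g_\ell(\tau)$ for any $\ell\in\gamma+L_d$ with $\gamma\in\cO$. So its lowest possible exponent $-m(\cO)$ comes from the $q^0$ term of $\Psi_d$ at a vector $\ell$ of minimal norm in the class.

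\emph{Step 1: the spin coefficients vanish identically.} The elliptic support of $\jmath_d^*\varphi_d$ consists of the vectors $\ell\in L_d^\dual$ with $(\ell,z)_{L_d}=(y,\jmath_dz)$ for some $y\in(D_8^{\oplus d})^\dual$. This means $2\ell=B^{\mathsf t}y$ blockwise. Since $(D_8^\dual)$ is contained in $\tfrac12\Z^8$ plus integers, we should check that $B^{\mathsf t}y$ always lies in $D_{8d}$ rather than $h+D_{8d}$. For $y$ in the cosets $0,v$ this is immediate. For the two spinor cosets of $D_8^\dual$, the entries $y_{2i-1}\pm y_{2i}$ are integers whose sum is even. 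Hence every $\ell$ in the support represents an integral class. The sum over $G_d$ preserves this, because $G_d$ preserves the integral classes. Therefore $f_{\mathrm s}=f_{\mathrm c}=0$.

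\emph{Step 2: $f_0=f_{\mathrm v}$.} The value span $V(\Phi_d)$ is $\rho_d$-invariant and, by Step~1, lies in $Z=\Span\{\ve_{\cO_0},\ve_{\cO_{\mathrm v}},\ve_{\cO_m}\}$. The finite Fourier sums in the proof of Lemma~\ref{lem:U-matrices} give the spin parts of $S\ve_{\cO}$ for $\cO$ integral. These parts vanish for $\cO_m$ with $1\le m\le 2d$, and they are opposite nonzero multiples of $\ve_{\cO_{\mathrm s}}+\ve_{\cO_{\mathrm c}}$ for $\cO_0$ and $\cO_{\mathrm v}$. Thus, for $w\in Z$, the spin part of $Sw$ is a nonzero multiple of $(w_0-w_{\mathrm v})(\ve_{\cO_{\mathrm s}}+\ve_{\cO_{\mathrm c}})$. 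Invariance of $V(\Phi_d)$ forces $w_0=w_{\mathrm v}$ for every value $w=\Phi_d(\tau)$, so $\Phi_d$ has the shape \eqref{eq:Phi-leading}. The bounds $O(q^{-m/2})$ and $O(1)$ at $\cO_0,\cO_{\mathrm v}$ are the weak conditions $\cE_d^{\mathrm w}$. Since exponents at $\cO_m$ lie in $-m/2+\Z$, the error terms are as stated.

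\emph{Step 3: nonvanishing of $a_m$ (the main obstacle).} Here $a_m$ is the $q^0$ coefficient of $\Psi_d$ at $\ell=x_m/2$, where $Q_{L_d}(x_m/2)=m/2$. To compute it, first expand the seed \eqref{eq:D8-seed} at $q^0$. Its theta coefficients are $(E_4-\theta_{D_8,0})/\Delta=128+O(q)$, $-\theta_{D_8,v}/\Delta=-16+O(q)$, $-\theta_{D_8,c}/\Delta=O(1)$ (or the $s$-class, depending on conventions), and a single coefficient $(E_4-\theta_{D_8,s})/\Delta$ with a $q^{-1}$ pole. This gives the $q^0$ term of $\widetilde\varphi^{D_8}_{-4,1}$ as an explicit $W(D_8)$-invariant Laurent polynomial supported on $0$, the vectors of norm $1$ and $2$ in $D_8^\dual$, and the $128$ half-spinor vectors of one class. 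The $q^0$ term of $\varphi_d$ is the $d$-fold product of these polynomials. Pulling back by $\jmath_d$ and symmetrizing over $G_d$, $a_m$ becomes a finite signed count of tuples $(y^{(1)},\dots,y^{(d)})$ with $\sum_a B^{\mathsf t}y^{(a)}/2$ in the $G_d$-orbit of $x_m/2$. Norm is additive, so only tuples with total norm $m/2$ contribute. The first thing to try is to show that all contributions have a common sign, for example by choosing the blocks so that each factor contributes only its spinor-class coefficient or only its constant term. This would give $a_m\ne0$ for every $m$. If sign cancellation occurs, the fallback is a direct computation of the block polynomial $\jmath_1^*$ of the seed's $q^0$ term, followed by an induction on $d$ using the product structure.

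\emph{Step 4: the dimension.} Once $a_0,\dots,a_{2d}\ne0$, the component functions $f_0=f_{\mathrm v},f_1,\dots,f_{2d}$ have distinct leading exponents $0,-\tfrac12,\dots,-d$. Hence they are linearly independent, and Lemma~\ref{lem:value-span}, applied with the independent vectors $\ve_{\cO_0}+\ve_{\cO_{\mathrm v}},\ve_{\cO_1},\dots,\ve_{\cO_{2d}}$, gives $\dim V(\Phi_d)=2d+1$ with the stated span.
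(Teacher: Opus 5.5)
Your Steps~1, 2 and~4 are sound and essentially follow the paper. Step~2 is the paper's use of the $\cO_{\mathrm s}$-coefficient of the $S$-transformation law, phrased through invariance of $V(\Phi_d)$.

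The gap is Step~3. You never prove $a_m\neq0$, yet this nonvanishing is the real content of the proposition, and Step~4 depends on it. You only describe what you would try (``all contributions have a common sign''), with a fallback in case of cancellation. The shortcut you suggest also cannot work as stated: if each block contributes only its constant term or only its spinor term, you reach only even $m$. Your seed data are also partly wrong exactly where it matters. The $v$-coefficient is $-\theta_{D_8,v}(\tau,0)/\Delta=-16q^{-1/2}+O(q^{1/2})$, not $-16+O(q)$. This term is what produces the norm-one characters with coefficient $-16$ at $q^0$, and odd $m$ needs them. The $D_8$ roots and the $c$-class vectors do not occur in the $q^0$ term at all.

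The missing idea is to pull back the $q^0$ term block by block before averaging. With $Z_j=\e(z_j)+\e(-z_j)$, the block map \eqref{eq:jmath} turns the $q^0$ term of the seed into
\[
128-16\sum_{i=1}^4Z_{2i-1}Z_{2i}+\sum_{\substack{I\subset\{1,2,3,4\}\\ |I|\ \mathrm{even}}}\prod_{i\in I}Z_{2i}\prod_{i\notin I}Z_{2i-1}.
\]
This is a sum of distinct characters $\e((x,z))$.
\begin{itemize}
\item Each $x\in\{0,\pm1\}^8$ has $0$, $2$ or $4$ nonzero entries, with coefficients $128$, $-16$, $1$ respectively. So the sign is $(-1)^{m_a}$, where $2m_a$ is the number of nonzero entries.
\item The blocks use disjoint coordinates. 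Hence every character in the $d$-fold product has $x\in\{0,\pm1\}^{8d}$ with $2m$ nonzero entries, where $m=\sum_a m_a$, and its coefficient has sign $(-1)^m$. Every $0\le m\le 2d$ occurs.
\item $G_d$ acts transitively on such $x$, because a zero coordinate can absorb an odd sign change.
\end{itemize}
So the $G_d$-average adds terms of a single sign, which gives $(-1)^m a_m>0$. Without this uniform sign pattern, your Step~3 leaves open exactly the cancellation you worry about.
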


\begin{proof}
Write $f_m,f_{\mathrm v},f_{\mathrm s},f_{\mathrm c}$ for the orbit coefficients of $\Phi_d$.
First we compute the $q^0$-term of $\Psi_d$ and read off the leading terms of $f_m$.
Next we prove that $f_{\mathrm s}=f_{\mathrm c}=0$.
Finally we show that $f_0=f_{\mathrm v}$.

For $b\in D_8^\dual$, the pullback of a Fourier character on one block is
\[
\e((b,Bz)_{D_8})
=\e((B^{\mathsf t}b,z)_{D_8})
=\e((B^{\mathsf t}b/2,z)_{D_8(2)}).
\]
Thus a product of characters from the $d$ blocks contributes to the theta coefficient of the class
\[
(B^{\mathsf t}b^{(1)},\ldots,B^{\mathsf t}b^{(d)})+2D_{8d}^{+}
\quad\text{in } A_d\cong D_{8d}^{+}/2D_{8d}^{+}.
\]
Here $b^{(a)}\in D_8^\dual$ is the exponent vector in block $a$.

Write the coordinates of $z$ as $z_{a,i}$, with $1\le a\le d$ and $1\le i\le8$, and put
$Z_{a,i}=\e(z_{a,i})+\e(-z_{a,i})$.
Substituting \eqref{eq:jmath} into the $q^0$-term of \eqref{eq:D8-seed}, given in \cite[Section~3.1.2]{AG20}, yields
\[
\begin{aligned}
[q^0](\jmath_d^*\varphi_d)
=\prod_{a=1}^d\Biggl(&128-16\sum_{i=1}^4 Z_{a,2i-1}Z_{a,2i}\\
&+\sum_{\substack{I\subset\{1,2,3,4\}\\ |I|\text{ even}}}
\prod_{i\in I}Z_{a,2i}\prod_{i\notin I}Z_{a,2i-1}\Biggr).
\end{aligned}
\]
In the last sum, each pair $(2i-1,2i)$ contributes one factor: $Z_{a,2i}$ if $i\in I$, and $Z_{a,2i-1}$ if $i\notin I$.
For example,
\[
I=\varnothing:\quad Z_{a,1}Z_{a,3}Z_{a,5}Z_{a,7},
\qquad
I=\{1,2\}:\quad Z_{a,2}Z_{a,4}Z_{a,5}Z_{a,7}.
\]
The allowed subsets $I$ are $\varnothing$, the six subsets with two elements, and $\{1,2,3,4\}$.
Each block therefore contributes $0$, $2$, or $4$ nonzero coordinates, with coefficient signs $+$, $-$, or $+$, respectively.
Every monomial in the product is a character
\[
\e((x,z)_{D_{8d}^{+}}),\qquad
x\in\{0,\pm1\}^{8d},\qquad
\#\{i:x_i\ne0\}=2m\quad(0\le m\le2d).
\]
Its coefficient has sign $(-1)^m$, and every $0\le m\le2d$ occurs.
The group $G_d$ acts transitively on the vectors in $\{0,\pm1\}^{8d}$ with $2m$ nonzero coordinates.
Indeed, there is always a zero coordinate, so every sign change can be paired with a sign change on a zero coordinate.
Hence averaging gives
\[
[q^0]\Psi_d
=a_0+\sum_{m=1}^{2d}a_m
\sum_{\substack{x\in\{0,\pm1\}^{8d}\\ \#\{i:x_i\ne0\}=2m}}
\e((x,z)_{D_{8d}^{+}}),
\qquad (-1)^m a_m>0.
\]
The character $\e((x_m,z)_{D_{8d}^{+}})$ belongs to the class $x_m+2D_{8d}^{+}$ in $\cO_m$.
Its exponent in the corresponding theta series is $Q_{L_d}(x_m/2)=m/2$.
Thus \eqref{eq:theta-coefficients} gives
\[
q^{m/2}f_m=a_m+O(q)\qquad(0\le m\le2d).
\]

Now consider all Fourier terms.
For every $b\in D_8^\dual$, its coordinates are all integers or all half-integers.
The coordinates $b_{2i-1}\pm b_{2i}$ of $B^{\mathsf t}b$ are therefore integers, and their sum $2(b_1+b_3+b_5+b_7)$ is even.
Thus $B^{\mathsf t}b\in D_8$.
The class displayed above is consequently represented by an element of $D_{8d}$, so it is integral.
Since $G_d$ preserves integral classes, no spin class occurs in $\Psi_d$.
Hence $f_{\mathrm s}=f_{\mathrm c}=0$.

Finally, take the $\cO_{\mathrm s}$-coefficient in the $S$-transformation law for $\Phi_d$.
The orbit sums are orthogonal, so Lemma~\ref{lem:U-matrices} and the unitarity of $\rho_d$ give
\[
0=\tau^{8d}f_{\mathrm s}(-1/\tau)
=2^{-4d}(f_0-f_{\mathrm v}).
\]
Hence $f_0=f_{\mathrm v}$.

The functions $f_0,f_1,\ldots,f_{2d}$ have distinct lowest Fourier exponents $0,-1/2,\ldots,-d$.
They are linearly independent over $\C$.
Lemma~\ref{lem:value-span} gives the asserted value span and dimension.
\end{proof}

On $V_d^{\mathrm{cyc}}$, put
\[
v_0=\ve_{\cO_0}+\ve_{\cO_{\mathrm v}},\qquad
v_m=\ve_{\cO_m}\quad(1\le m\le2d),
\]
and let $v_0^\dual,\ldots,v_{2d}^\dual$ be the dual basis, so that
$v_m^\dual(v_l)=\delta_{ml}$.  Write
$\rho_d^{\mathrm{cyc}}=\rho_d|_{V_d^{\mathrm{cyc}}}$ and define
\begin{equation}\label{eq:Ecyc}
\cE_d^{\mathrm{cyc}}=\{(v_m^\dual,-m/2):0\le m\le2d\}.
\end{equation}
For a form $F=\sum_{m=0}^{2d}f_m v_m$, the $\cO_0$- and $\cO_{\mathrm v}$-coefficients are both $f_0$.  Their two conditions in \eqref{eq:Ed-weak} are $f_0=O(1)$ and $f_0=O(q^{-1})$.  The reduction in \eqref{eq:reduced-exponent-conditions} retains the stronger condition $f_0=O(1)$.  The $\cO_{\mathrm s}$- and $\cO_{\mathrm c}$-coefficients are zero, and the remaining conditions are $f_m=O(q^{-m/2})$.  Thus \eqref{eq:Ecyc} describes exactly the original conditions on forms taking values in $V_d^{\mathrm{cyc}}$.  In particular,
\[
\tr\cE_d^{\mathrm{cyc}}
=-\frac12\sum_{m=1}^{2d}m
=-\frac{d(2d+1)}2.
\]

\begin{corollary}\label{cor:cyclic-basis}
The forms $\mathcal D^j\Phi_d$, $0\le j\le2d$, form a homogeneous
basis of $M(\rho_d^{\mathrm{cyc}};\cE_d^{\mathrm{cyc}})$,
with weights $-8d+2j$.
\end{corollary}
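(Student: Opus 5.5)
The plan is to apply Theorem~\ref{thm:basis-criterion} to the representation $\rho_d^{\mathrm{cyc}}$ on $V_d^{\mathrm{cyc}}$ with the complete system $\cE_d^{\mathrm{cyc}}$. Two things are needed: $\M$-linear independence of the $2d+1$ forms $\mathcal D^j\Phi_d$, and the identity that their weights sum to $12\tr\cE_d^{\mathrm{cyc}}$.

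First, the forms lie in the right module. By Proposition~\ref{prop:Phi-leading}, $\Phi_d$ takes values in $V_d^{\mathrm{cyc}}$. It satisfies the original weak conditions, so by the reduction discussion preceding \eqref{eq:Ecyc} it lies in $M_{-8d}(\rho_d^{\mathrm{cyc}};\cE_d^{\mathrm{cyc}})$. The Serre derivative preserves every prescribed lower bound, so each $\mathcal D^j\Phi_d$ lies in $M_{-8d+2j}(\rho_d^{\mathrm{cyc}};\cE_d^{\mathrm{cyc}})$. Here $V_d^{\mathrm{cyc}}$ is $\rho_d$-invariant by Lemma~\ref{lem:value-span}, since it is the value span $V(\Phi_d)$.

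Second, independence. Since $\dim V(\Phi_d)=2d+1$, Proposition~\ref{prop:derivative-independent} gives that $\Phi_d,\mathcal D\Phi_d,\ldots,\mathcal D^{2d}\Phi_d$ are linearly independent over the meromorphic functions, and hence over $\M$.

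Third, the weight count. The weights sum to
\[
\sum_{j=0}^{2d}(-8d+2j)=-8d(2d+1)+2d(2d+1)=-6d(2d+1).
\]
On the other side, $12\tr\cE_d^{\mathrm{cyc}}=12\cdot\bigl(-d(2d+1)/2\bigr)=-6d(2d+1)$. The two agree, so Theorem~\ref{thm:basis-criterion} shows the family is an $\M$-basis.

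The argument is short because the leading-term computation in Proposition~\ref{prop:Phi-leading} is already done. The only delicate point is that the trace must be computed with the reduced system, where the $\cO_0$ and $\cO_{\mathrm v}$ conditions merge into $f_0=O(1)$, contributing $0$ rather than $-1$. This was settled before the statement. Alternatively, the leading coefficient of the Wronskian determinant, $\prod a_m$ times a Vandermonde, shows directly that $\det/\eta^{24\tr\cE}$ has nonzero constant term in weight $0$.
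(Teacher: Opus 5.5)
Your proposal is correct and matches the paper's proof. Both get independence from Proposition~\ref{prop:derivative-independent} using $\dim V(\Phi_d)=2d+1$, compute $\sum_{j=0}^{2d}(-8d+2j)=-6d(2d+1)=12\tr\cE_d^{\mathrm{cyc}}$, and conclude with Theorem~\ref{thm:basis-criterion}; you are also right that the trace must use the reduced system, where the $v_0$-bound is $0$, as the text settles just before the statement.
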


\begin{proof}
Proposition~\ref{prop:derivative-independent} shows that these forms are independent.
Their weights sum to
\[
\sum_{j=0}^{2d}(-8d+2j)=-6d(2d+1)=12\tr\cE_d^{\mathrm{cyc}}.
\]
Theorem~\ref{thm:basis-criterion} therefore makes them a basis.
\end{proof}

By Proposition~\ref{prop:Phi-leading}, $V_d^{\mathrm{cyc}}$ is an invariant subspace of dimension $2d+1$.  Lemma~\ref{lem:U-matrices} gives an invariant subspace $U_d$ of dimension $3$.  Their displayed bases together form a basis of $\cV_d$, which has dimension $2d+4$.  Hence
\begin{equation}\label{eq:representation-splitting}
\cV_d=V_d^{\mathrm{cyc}}\oplus U_d
\end{equation}
is a decomposition into subrepresentations.

Let $P_U:\cV_d\to U_d$ be the projection associated with \eqref{eq:representation-splitting}, and put $\rho_d^U=\rho_d|_{U_d}$.  Let
\[
u_0^\dual,u_1^\dual,u_2^\dual\in U_d^\dual,
\qquad u_i^\dual(u_j)=\delta_{ij},
\]
be the covectors dual to the basis of Lemma~\ref{lem:U-matrices}.  Since
$u_0=2^{4d-2}(\ve_{\cO_0}-\ve_{\cO_{\mathrm v}})$,
\begin{align*}
P_U(\ve_{\cO_0})&=2^{1-4d}u_0,&
P_U(\ve_{\cO_{\mathrm v}})&=-2^{1-4d}u_0,\\
P_U(\ve_{\cO_m})&=0\quad(1\le m\le2d),&
P_U(\ve_{\cO_{\mathrm s}})&=\ve_{\cO_{\mathrm s}},\qquad
P_U(\ve_{\cO_{\mathrm c}})=\ve_{\cO_{\mathrm c}}.
\end{align*}
If $F=\sum_{\cO}f_{\cO}\ve_{\cO}\in M_k(\rho_d;\cE_d^{\mathrm{w}})$, it follows that
\[
u_0^\dual(P_UF)=2^{1-4d}(f_{\cO_0}-f_{\cO_{\mathrm v}})=O(q^{-1}).
\]
The other two coordinate functions are the $\cO_{\mathrm s}$- and $\cO_{\mathrm c}$-coefficients, in the order determined by the parity of $d$.  Define
\begin{equation}\label{eq:EU}
\cE_d^U=
\begin{cases}
\{(u_0^\dual,-1),(u_1^\dual,-d/2),(u_2^\dual,-(d+1)/2)\},&d\text{ even},\\[2mm]
\{(u_0^\dual,-1),(u_1^\dual,-(d+1)/2),(u_2^\dual,-d/2)\},&d\text{ odd}.
\end{cases}
\end{equation}
In particular,
\[
\tr\cE_d^U=-d-\frac32,\qquad
P_U:M_k(\rho_d;\cE_d^{\mathrm{w}})\longrightarrow M_k(\rho_d^U;\cE_d^U).
\]
Equivalently, $H=h_0u_0+h_1u_1+h_2u_2$ belongs to
$M_k(\rho_d^U;\cE_d^U)$ if and only if $h_0=O(q^{-1})$, and
\[
\begin{cases}
h_1=O(q^{-d/2}),\quad h_2=O(q^{-(d+1)/2}),&d\text{ even},\\
h_1=O(q^{-(d+1)/2}),\quad h_2=O(q^{-d/2}),&d\text{ odd}.
\end{cases}
\]

We compute the conditions in \eqref{eq:EU} on the two summands of $U_d$ directly.  A form taking values in $U_d^{(1)}=\C(u_0+u_1)$ is $g(u_0+u_1)$, whose coordinates are $(g,g,0)$.  Since $d\ge2$, the $u_0$ condition is at least as strong as the $u_1$ condition, so the single required bound is $g=O(q^{-1})$.  A form taking values in
$U_d^{(2)}=\Span\{2u_0-u_1,u_2\}$ has the form
\[
f(2u_0-u_1)+h u_2,
\]
with coordinates $(2f,-f,h)$.  Its conditions are therefore
\[
f=O(q^{-1}),\qquad
h=
\begin{cases}
O(q^{-(d+1)/2}),&d\text{ even},\\
O(q^{-d/2}),&d\text{ odd}.
\end{cases}
\]
In the notation of Theorem~\ref{thm:rank3-classification}, using the eigenvectors
$u_0+u_1$, $2u_0-u_1$, and $u_2$ in place of $v_1,v_2,v_3$, this gives
\[
x_1=x_2=-1,\qquad
x_3=
\begin{cases}
-(d+1)/2,&d\text{ even},\\
-d/2,&d\text{ odd}.
\end{cases}
\]

The restricted growth conditions above now determine the generator weights
of the complementary module.

\begin{proposition}\label{prop:U-weights}
The module $M(\rho_d^U;\cE_d^U)$ is free with generator weights
\[
{-4d-8,\quad-4d-6,\quad-4d-4.}
\]
\end{proposition}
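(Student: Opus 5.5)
We apply Theorem~\ref{thm:rank3-classification} to $\rho_d^U=U_d^{(1)}\oplus U_d^{(2)}$. Its hypotheses hold: $U_d^{(1)}$ is one-dimensional, $U_d^{(2)}$ is irreducible two-dimensional by Lemma~\ref{lem:U-matrices}, and $\cE_d^U$ is complete. We use the eigenvectors $u_0+u_1$, $2u_0-u_1$, $u_2$ in the roles of $v_1,v_2,v_3$, which is legitimate since $2u_0-u_1$ and $u_2$ are $T$-eigenvectors with eigenvalues $1$ and $-1$. The induced bounds $x_1,x_2,x_3$ were computed just before the statement.

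First we compute the invariants. We have $s=\tr\cE_d^U=-d-\tfrac32$ and $A=12x_1=-12$. In both parity cases, $x_2+x_3=-1-(d+1)/2$ for $d$ even and $-1-d/2$ for $d$ odd. Hence
\[
\delta=x_1+x_2+x_3-s=
\begin{cases}
-2-\tfrac{d+1}2+d+\tfrac32=\tfrac d2,&d\text{ even},\\
-2-\tfrac d2+d+\tfrac32=\tfrac{d-1}2,&d\text{ odd}.
\end{cases}
\]
For $d\ge2$ this gives $\delta\ge1$ whenever $d\ge2$ is even or $d\ge3$ is odd, which covers all $d\ge2$. So we are in the case $\delta>0$.

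Next we locate the correct case of Theorem~\ref{thm:rank3-classification}. We have $4s+2=-4d-4$. Since $d\ge2$, $A=-12>-4d-4$, so case~(1) is excluded. For case~(2) versus case~(3) we compute $B=6(x_2+x_3)-1$, which equals $-3d-10$ for $d$ even and $-3d-7$ for $d$ odd, and compare it with $4s=-4d-6$. For $d$ even, $B>4s$ iff $d>4$; for $d$ odd, $B>4s$ iff $d>1$.

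\begin{itemize}
\item In case~(3) the weights are $4s-2,4s,4s+2=-4d-8,-4d-6,-4d-4$, which is exactly the claim.
\item In case~(2) the weights are $B,B+2,12(s-x_2-x_3)$. For $d=2$ these are $-16,-14,-12$, and for $d=4$ they are $-22,-20,-24$. These agree as multisets with $\{-4d-8,-4d-6,-4d-4\}$, namely $\{-16,-14,-12\}$ and $\{-24,-22,-20\}$.
\end{itemize}

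So every $d\ge2$ yields the stated multiset.

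The main care point is the bookkeeping. We must check that the $x_j$ are the correct maxima over the covectors $\ell_i$ nonvanishing on each eigenvector. This amounts to the coordinate computation preceding the statement: $u_0+u_1$ has coordinates $(1,1,0)$, $2u_0-u_1$ has $(2,-1,0)$, and $u_2$ has $(0,0,1)$. We must also handle the boundary values $d=2,4$, where case~(2) rather than~(3) applies but the weights coincide. Freeness itself is Theorem~\ref{thm:freeness}.
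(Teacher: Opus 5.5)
Your strategy is the paper's: apply Theorem~\ref{thm:rank3-classification} with $s=-d-\tfrac32$, $A=-12$, $B=-3d-10$ or $-3d-7$, and sort the values of $d$ into cases. For $d\ge3$ your case analysis is correct and agrees with the paper: case~(2) at $d=4$, case~(3) otherwise. The problem is at $d=2$.

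Your computation of $\delta$ for even $d$ is off by one. In fact $-2-\tfrac{d+1}2+d+\tfrac32=\tfrac d2-1$, not $\tfrac d2$. So $\delta=0$ when $d=2$, and your claim that $\delta\ge1$ for all $d\ge2$ is false. Likewise, your inequality $A=-12>-4d-4=4s+2$ fails at $d=2$, where it is an equality. At $d=2$ you therefore invoke case~(2), whose standing hypothesis $\delta>0$ does not hold. Even if $\delta$ were positive, $A\le4s+2$ would put you in case~(1), not case~(2).

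The repair is immediate, and it is what the paper does. For $d=2$ one has $x_1+x_2+x_3=-1-1-\tfrac32=-\tfrac72=s$, so $\delta=0$. The split part of Theorem~\ref{thm:rank3-classification} then gives the basis $G_1,F_2,\DD F_2$, with weights $A=-12$, $B=-16$ and $B+2=-14$. Your numbers for $d=2$ came out right only by coincidence: when $\delta=0$, the case-(2) third weight $12(s-x_2-x_3)$ equals $12x_1=A$.
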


\begin{proof}
In the notation of Theorem~\ref{thm:rank3-classification}, put
$s=\tr\cE_d^U=-d-3/2$ and $A=12x_1=-12$.
For $d=2$, the restricted exponent sums add to $s$.
The split case gives weight $-12$ on the trivial summand and weights $-16,-14$ on the standard summand.

For $d=4$, we have
\[
x_2+x_3=-\tfrac72,\qquad B=6(x_2+x_3)-1=-22=4s.
\]
Here $A>4s+2$.
Case~(2) of Theorem~\ref{thm:rank3-classification} gives weights $-24,-22,-20$.

If $d\ge3$ is odd, then $B=-3d-7>4s$.
If $d\ge6$ is even, then $B=-3d-10>4s$.
In both cases $A>4s+2$, so case~(3) applies.
Its weights are
\[
4s-2=-4d-8,\qquad4s=-4d-6,\qquad4s+2=-4d-4.
\]
All cases give the stated list.
\end{proof}

The preceding calculation gives a well-defined map
\[
P_U:M(\rho_d;\cE_d^{\mathrm{w}})\longrightarrow M(\rho_d^U;\cE_d^U).
\]
\begin{theorem}\label{thm:exact-sequence}
For every $d\ge2$ there is an exact sequence of graded $\M$-modules
\[
0\longrightarrow
M(\rho_d^{\mathrm{cyc}};\cE_d^{\mathrm{cyc}})
\longrightarrow
M(\rho_d;\cE_d^{\mathrm{w}})
\stackrel{P_U}{\longrightarrow}
M(\rho_d^U;\cE_d^U)
\longrightarrow0.
\]
Consequently, the $\M$-module $J_{*,L_d}^{\mathrm{w},G_d}$ is free of rank $2d+4$
with generator-weight multiset
\[
{\{-4d+2j:0\le j\le2d\}\sqcup\{-8,-6,-4\}.}
\]
\end{theorem}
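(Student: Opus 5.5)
Exactness at the left and middle is formal; the real content is surjectivity of $P_U$, and once that is in hand the weight multiset follows from the two summand bases.

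\textbf{Injectivity and exactness in the middle.} The first map is the inclusion of forms taking values in $V_d^{\mathrm{cyc}}$. By the discussion before \eqref{eq:Ecyc}, $\cE_d^{\mathrm{cyc}}$ describes exactly the conditions $\cE_d^{\mathrm{w}}$ imposes on such forms, so the inclusion is well defined and injective. The composite with $P_U$ is zero. Conversely, if $P_UF=0$, then $F$ takes values in $\ker P_U=V_d^{\mathrm{cyc}}$, so $F$ lies in the image. This gives exactness at the left and in the middle.

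\textbf{Surjectivity of $P_U$.} Here I would avoid constructing lifts of arbitrary elements. Instead I would lift a basis of $M(\rho_d^U;\cE_d^U)$ and use the weight-sum criterion. Concretely:
\begin{enumerate}[label=\textup{(\alph*)}]
\item Choose $F_1,F_2,F_3\in M(\rho_d;\cE_d^{\mathrm{w}})$ such that $P_UF_1,P_UF_2,P_UF_3$ are $\M$-linearly independent, with weights as small as possible.
\item Show that the family $\mathcal D^j\Phi_d$ ($0\le j\le 2d$) together with $F_1,F_2,F_3$ is $\M$-independent. This holds because the determinant is block triangular with respect to \eqref{eq:representation-splitting}.
\item Apply Theorem~\ref{thm:basis-criterion} to compare weight sums:
\[
12\tr\cE_d^{\mathrm{w}} = 12\tr\cE_d^{\mathrm{cyc}}+12\bigl(\tr\cE_d^{\mathrm{w}}-\tr\cE_d^{\mathrm{cyc}}\bigr).
\]
The second difference is $12\bigl(-1-\tfrac d2-\tfrac{d+1}2\bigr)=12(-d-\tfrac32)=12\tr\cE_d^U$.
\end{enumerate}
So if the $F_i$ can be chosen with weights $-4d-8,-4d-6,-4d-4$ (Proposition~\ref{prop:U-weights}), then the full family is a basis of $M(\rho_d;\cE_d^{\mathrm{w}})$. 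The projections $P_UF_i$ then have total weight $12\tr\cE_d^U$ and are independent, so Theorem~\ref{thm:basis-criterion} makes them a basis of $M(\rho_d^U;\cE_d^U)$, and surjectivity follows.

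\textbf{Producing the lifts (main obstacle).} The difficulty is to find forms in $M(\rho_d;\cE_d^{\mathrm{w}})$ whose $U$-projections realize the generator weights of Proposition~\ref{prop:U-weights}. The obstruction is that a naive lift $H\in M(\rho_d^U;\cE_d^U)\subset M(\rho_d)$ need not satisfy $\cE_d^{\mathrm{w}}$. The pure $u_0$ direction $\ve_{\cO_0}-\ve_{\cO_{\mathrm v}}$ forces the $\cO_0$-coefficient, which must be $O(1)$, to carry the same $q^{-1}$ pole as $h_0$. The fix is to correct by an element of $V_d^{\mathrm{cyc}}$-valued weakly holomorphic forms, so the issue is only this single $q^{-1}$ term in the $\cO_0$-coordinate. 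My concrete plan is:
\begin{enumerate}[label=\textup{(\alph*)}]
\item Given $H$, take $\widetilde F=H+\lambda\,G$, where $G$ is a $V_d^{\mathrm{cyc}}$-valued form of the same weight. The $v_0$-component of $G$ is allowed a pole $q^{-1}$, meaning $G\in M(\rho_d^{\mathrm{cyc}};\cE_d^{\mathrm{cyc}}-\text{shift})$ with only the $v_0$ bound relaxed by $1$.
\item The required coefficient is $2^{1-4d}[q^{-1}]h_0$ on $\ve_{\cO_0}$ and its negative on $\ve_{\cO_{\mathrm v}}$. Adding $c\,q^{-1}v_0$ with $c=-2^{1-4d}[q^{-1}]h_0$ kills the $\cO_0$ pole and doubles the $\cO_{\mathrm v}$ pole, which is allowed.
\item Existence of such $G$ in the needed weights follows from the module $M(\rho_d^{\mathrm{cyc}};\cE')$, where $\cE'$ relaxes the $v_0$ bound by $1$. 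It is free with weight sum $12\tr\cE_d^{\mathrm{cyc}}-12$, and its elements can be written explicitly as $\M[E_4^3/\Delta\text{-type}]$ combinations of the $\mathcal D^j\Phi_d$. Alternatively, one can use $\Delta^{-1}$ times the basis and clear the other poles using the $\mathcal D^j\Phi_d$, whose leading terms $a_mq^{-m/2}$ are triangular by Proposition~\ref{prop:Phi-leading}.
\end{enumerate}
This triangular elimination of principal parts is the step I expect to require the most care, particularly checking that the weights stay at $-4d-8,-4d-6,-4d-4$.

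\textbf{Generator weights.} Finally, adding the theta weight $4d$ to the weights $-8d+2j$ from Corollary~\ref{cor:cyclic-basis} and to the weights of Proposition~\ref{prop:U-weights} gives the multiset $\{-4d+2j:0\le j\le2d\}\sqcup\{-8,-6,-4\}$.
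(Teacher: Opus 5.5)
Your formal steps are correct: the kernel identification, the block-triangular determinant, and the bookkeeping $\tr\cE_d^{\mathrm w}=\tr\cE_d^{\mathrm{cyc}}+\tr\cE_d^U$. Your idea of correcting a $U_d$-valued form by a $V_d^{\mathrm{cyc}}$-valued form with a $q^{-1}$ pole in the $v_0$-coordinate is also exactly the paper's. But the step that carries all the content of surjectivity is not proved. That step is the existence of the correcting form $G$ in the weights $-4d-8,-4d-6,-4d-4$.

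Your first justification cannot work. In each weight, the quotient $M_k(\rho_d^{\mathrm{cyc}};\cE')/M_k(\rho_d^{\mathrm{cyc}};\cE_d^{\mathrm{cyc}})$ injects into $\C$ via $[q^{-1}]\,v_0^\dual$. Freeness of $M(\rho_d^{\mathrm{cyc}};\cE')$ and its weight sum $12\tr\cE_d^{\mathrm{cyc}}-12$ do not tell you whether this quotient is nonzero in any particular low weight.

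Your second justification relies on the lowest terms being ``triangular'', and they are not. $\Phi_d$ has nonzero lowest terms $a_mq^{-m/2}$ in every coordinate at once. Moreover, after multiplying by $\Delta^{-1}$ the poles are of order $q^{-m/2-1}$. These can only be cancelled among $\Delta^{-1}$-multiples of weight-$(k+12)$ forms, not by adding the $\mathcal D^j\Phi_d$ themselves.

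The missing ingredient is invertibility of the full lowest-coefficient matrix $A_{mj}=[q^{-m/2}]\,v_m^\dual(\mathcal D^j\Phi_d)$, $0\le m,j\le 2d$. The paper gets $\det A=\lambda\ne0$ by multiplying row $m$ of the component matrix by $q^{m/2}$. It then compares with $\det(\mathcal D^0\Phi_d,\ldots,\mathcal D^{2d}\Phi_d)=\lambda\,\eta^{24\tr\cE_d^{\mathrm{cyc}}}$. Alternatively, the $(m,j)$ entry is $a_m$ times a monic degree-$j$ polynomial evaluated at $-m/2$. So $\det A$ is $\prod_m a_m$ times a Vandermonde determinant, as in the proof of Proposition~\ref{prop:derivative-independent}.

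Given $H$ of weight $k$, work in weight $k+12$ with the forms $E_{\ell_j}\mathcal D^j\Phi_d$, where $\ell_j=k+8d+12-2j$. All of them have lowest-coefficient matrix $A$. The Eisenstein multipliers exist for every $j$ exactly because $\ell_{2d}=k+4d+12\ge4$. That is, $k\ge-4d-8$, the least weight of $M(\rho_d^U;\cE_d^U)$ by Proposition~\ref{prop:U-weights}. Solving $Ab=(c,0,\ldots,0)^{\mathsf t}$ and dividing by $\Delta$ gives $G$.

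This is the weight check you deferred. Since it works for every $k\ge-4d-8$, it lifts arbitrary elements directly. Your detour through the weight-sum criterion is then unnecessary: lifting a basis already gives surjectivity by $\M$-linearity of $P_U$.

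A minor slip: the $\cO_0$-coefficient of $h_0u_0$ is $2^{4d-2}h_0$, not $2^{1-4d}h_0$. The latter is the coefficient in $P_U\ve_{\cO_0}$. This only rescales $G$.
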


\begin{proof}
If $P_UF=0$, then $F$ takes values in $V_d^{\mathrm{cyc}}$.
Its $\cO_0$- and $\cO_{\mathrm v}$-coefficients are equal.
The bound $O(1)$ at $\cO_0$ is stronger than $O(q^{-1})$ at $\cO_{\mathrm v}$.
The remaining conditions are those in \eqref{eq:Ecyc}.
Thus
\[
\ker P_U=M(\rho_d^{\mathrm{cyc}};\cE_d^{\mathrm{cyc}}).
\]

We prove surjectivity.
Take $F\in M_k(\rho_d^U;\cE_d^U)$ and regard it as a $\cV_d$-valued form through $U_d\hookrightarrow\cV_d$.
Its $\cO_0$-coefficient is
\[
2^{4d-2}u_0^\dual(F)=cq^{-1}+O(1),\qquad c\in\C.
\]
Here the exponents are integral, by the action of $T$ on $u_0$.
All other conditions in \eqref{eq:Ed-weak} hold.
In particular, its $\cO_{\mathrm v}$-coefficient is the negative of its $\cO_0$-coefficient, and the $\cO_{\mathrm v}$ bound allows $q^{-1}$.
If $c=0$, the included $\cV_d$-valued form satisfies \eqref{eq:Ed-weak} and projects to the original $F$.
Assume $c\ne0$.
We seek a form $H$ of weight $k$ taking values in $V_d^{\mathrm{cyc}}$ such that
\[
v_0^\dual(H)=cq^{-1}+O(1),\qquad
v_m^\dual(H)=O(q^{-m/2})\quad(1\le m\le2d).
\]
Then $F-H$ satisfies \eqref{eq:Ed-weak} and still projects to $F$.

Put $G_j=\mathcal D^j\Phi_d$ for $0\le j\le2d$.
Let $A$ be their matrix of lowest Fourier coefficients:
\[
A_{mj}=[q^{-m/2}]\,v_m^\dual(G_j),\qquad 0\le m,j\le2d.
\]
Corollary~\ref{cor:cyclic-basis} and Theorem~\ref{thm:basis-criterion} give
\[
\det(G_0,\ldots,G_{2d})=\lambda\eta^{24\tr\cE_d^{\mathrm{cyc}}},
\qquad\lambda\ne0.
\]
Multiply row $m$ of the component matrix by $q^{m/2}$.
The resulting determinant has constant term $\det A$.
The normalized right-hand side has constant term $\lambda$.
Thus $\det A=\lambda\ne0$.

By Proposition~\ref{prop:U-weights}, $k\ge-4d-8$.
Hence
\[
\ell_j:=k+12-\wt(G_j)=k+8d+12-2j\ge4.
\]
Each $\ell_j$ is even.
The forms $E_{\ell_j}G_j$ all have weight $k+12$ and the same lowest-coefficient matrix $A$.
Choose constants $b_0,\ldots,b_{2d}$ with
$A(b_0,\ldots,b_{2d})^{\mathsf t}=(c,0,\ldots,0)^{\mathsf t}$.
Then
\[
H_0:=\sum_{j=0}^{2d}b_jE_{\ell_j}G_j
=(c+O(q))v_0+\sum_{m=1}^{2d}O(q^{-m/2+1})v_m.
\]
Since $\Delta=q+O(q^2)$ has weight $12$ and no zeros on $\Hh$,
$H=H_0/\Delta$ has weight $k$ and the required Fourier bounds.
Therefore $F-H\in M_k(\rho_d;\cE_d^{\mathrm{w}})$ and $P_U(F-H)=F$.
This proves surjectivity.

The target has a homogeneous free basis by Proposition~\ref{prop:U-weights}.
Choose a homogeneous preimage of each basis vector.
Extending these choices $\M$-linearly gives a graded section of $P_U$.
Hence
\[
M(\rho_d;\cE_d^{\mathrm{w}})
\cong M(\rho_d^{\mathrm{cyc}};\cE_d^{\mathrm{cyc}})
\oplus M(\rho_d^U;\cE_d^U).
\]
Corollary~\ref{cor:cyclic-basis} and Proposition~\ref{prop:U-weights} give the two lists of generator weights.
Theta decomposition adds $4d$ to each weight.
This proves the weak assertion of Theorem~C.
\end{proof}

\subsection{The holomorphic case}

We determine the holomorphic generator weights by comparing the dimensions
of the weak and holomorphic spaces.
Proposition~\ref{prop:principal-parts} gives this comparison in positive
vector-valued weight. At weight zero, we obtain matching upper and lower
bounds for the dimension of the space of invariant vectors.

\begin{lemma}\label{lem:zero-weight-bound}
Suppose $\rho$ has finite image and
\[
F,\mathcal DF,\ldots,\mathcal D^{r-1}F
\]
is a basis of $M(\rho;\cE)$, with $\wt(F)=k_0$.  Assume also that every holomorphic weight-zero form for $\rho$ belongs to $M_0(\rho;\cE)$; this holds, for example, when all lower bounds in $\cE$ are nonpositive.  Then
\[
\dim M_0(\rho)
\le
\dim M_{-k_0-2(r-1)}.
\]
\end{lemma}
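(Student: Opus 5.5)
The plan is to send each holomorphic weight-zero form to its coefficient on the top basis element $\mathcal D^{r-1}F$, and to show that this linear map into $M_{-k_0-2(r-1)}$ is injective.

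\emph{Step 1: weight-zero forms are constant vectors.} Since $\rho$ has finite image, each component of $v\in M_0(\rho)$ is a holomorphic scalar modular form of weight zero on the finite-index subgroup $\ker\rho$. Such a form is constant, as in the last paragraph of the proof of Proposition~\ref{prop:principal-parts}. Hence $v$ is a constant vector fixed by $\rho$. In particular $\mathcal D_0v=q\,dv/dq=0$. By hypothesis $v\in M_0(\rho;\cE)$, so the basis gives a unique expansion
\[
v=\sum_{j=0}^{r-1}a_j\,\mathcal D^jF,\qquad a_j\in M_{-k_0-2j}.
\]

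\emph{Step 2: the top coefficient determines all others.} Expand $\mathcal D^rF=\sum_{j=0}^{r-1}b_j\mathcal D^jF$ with $b_j\in M_{2r-2j}$; in particular $b_{r-1}\in M_2=0$. Apply $\mathcal D$ to the expansion of $v$, using the Leibniz rule:
\[
0=\sum_{j=0}^{r-1}(\mathcal Da_j)\mathcal D^jF+\sum_{j=0}^{r-2}a_j\mathcal D^{j+1}F+a_{r-1}\sum_{j=0}^{r-1}b_j\mathcal D^jF.
\]
By uniqueness of coefficients in the free basis, the coefficient of $\mathcal D^jF$ vanishes for every $j$. For $1\le j\le r-1$ this reads
\[
\mathcal Da_j+a_{j-1}+a_{r-1}b_j=0,
\]
so $a_{j-1}=-\mathcal Da_j-a_{r-1}b_j$. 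For $j=r-1$ the last term drops because $b_{r-1}=0$. Descending induction on $j$ then shows that $a_{r-1}$ determines $a_{r-2},\ldots,a_0$. In particular, if $a_{r-1}=0$ then every $a_j=0$, and so $v=0$.

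\emph{Step 3: conclusion.} The map $v\mapsto a_{r-1}$ is $\C$-linear from $M_0(\rho)$ to $M_{-k_0-2(r-1)}$ and injective by Step~2. This gives the inequality.

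The only delicate point is bookkeeping in Step~2: the $D^0$-coefficient equation is an extra constraint, not needed for injectivity. We also need $b_{r-1}=0$ for the clean top step, but even without it the recursion determines the lower coefficients from $a_{r-1}$. I do not expect a genuine obstacle beyond confirming that weight-zero holomorphic forms are constant.
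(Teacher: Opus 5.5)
Your proof is correct and follows the paper's argument: both send $c\in M_0(\rho)$ to its top coefficient $a_{r-1}$, and both prove injectivity by applying $\mathcal D$ to $c=\sum_j a_j\mathcal D^jF$ and comparing coefficients in the free basis. The paper phrases the last step by choosing the largest $m$ with $a_m\ne0$, so that $\mathcal D^{m+1}F$ is still a basis element and no expansion of $\mathcal D^rF$ is needed; this is exactly your descending recursion in the case $a_{r-1}=0$, where the $b_j$ terms vanish anyway.
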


\begin{proof}
Take $c\in M_0(\rho)$ and put $K=\ker\rho$.
Since $\rho$ has finite image, $K$ has finite index.
Each component of $c$ is a scalar modular form of weight zero on $K$.
It is holomorphic at every cusp: the components of $c|_0\gamma=\rho(\gamma)c$ are linear combinations of those of $c$.
Each component therefore extends to a holomorphic function on the compact modular curve for $K$ and is constant.
The transformation law makes this constant vector $\rho$-invariant.
Conversely, every invariant constant vector is a holomorphic weight-zero form.
In particular, $\DD c=0$.

By hypothesis, $c$ has a unique expansion
\[
c=\sum_{j=0}^{r-1}a_j\DD^jF,\qquad a_j\in M_{-k_0-2j}.
\]
We show that $c\mapsto a_{r-1}$ is an injection into $M_{-k_0-2(r-1)}$.
For $r=1$, this follows directly from $c=a_0F$.
Assume $r\ge2$ and suppose $a_{r-1}=0$ but $c\ne0$.
Choose the largest $m$ with $a_m\ne0$.
Then $m\le r-2$, and differentiation gives
\[
0=\DD c=\sum_{j=0}^{m}\bigl((\DD a_j)\DD^jF+a_j\DD^{j+1}F\bigr).
\]
The coefficient of $\DD^{m+1}F$ is $a_m$.
Every other term has derivative order at most $m$.
Independence of the basis forces $a_m=0$, a contradiction.
This proves the dimension bound.
\end{proof}

\begin{proposition}\label{prop:cyclic-invariants}
For every $d\ge2$,
\[
\dim M_0(\rho_d^{\mathrm{cyc}})=\left\lfloor\frac d3\right\rfloor+1,
\qquad
\dim M_0(\rho_d)=\left\lfloor\frac d3\right\rfloor+2.
\]
\end{proposition}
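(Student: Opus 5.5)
The plan is to compute both dimensions directly from the fact that $\rho_d$ factors through $S_3$. The upper bound of Lemma~\ref{lem:zero-weight-bound} will serve as a consistency check, and as a fallback if the trace computation resists a closed form.

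\emph{Reduction to invariants.} As in the proof of Lemma~\ref{lem:zero-weight-bound}, $M_0(\rho)$ is the space of $\rho$-invariant constant vectors. Since $\rho_d$ factors through $S_3$, we have
\[
\dim M_0(\rho)=\tfrac16\bigl(\chi(1)+3\chi(T)+2\chi(ST)\bigr)
\]
for $\rho=\rho_d^{\mathrm{cyc}},\rho_d^U,\rho_d$. Here $S$ and $T$ are conjugate transpositions and $ST$ is a $3$-cycle.

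For $U_d$, Lemma~\ref{lem:U-matrices} gives $U_d=\mathbf 1\oplus\sigma$, so $\dim M_0(\rho_d^U)=1$. The splitting \eqref{eq:representation-splitting} then reduces the second formula to the first. It therefore suffices to show $\dim(V_d^{\mathrm{cyc}})^{S_3}=\lfloor d/3\rfloor+1$.

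\emph{Upper bound.} Apply Lemma~\ref{lem:zero-weight-bound} to the cyclic basis of Corollary~\ref{cor:cyclic-basis}, with $k_0=-8d$ and $r=2d+1$. All bounds in $\cE_d^{\mathrm{cyc}}$ are nonpositive, so the lemma applies and gives
\[
\dim M_0(\rho_d^{\mathrm{cyc}})\le\dim M_{4d}.
\]
Since $4d\not\equiv2\pmod{12}$, this equals $\lfloor d/3\rfloor+1$.

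\emph{Matching via characters.} On the basis $v_0,\ldots,v_{2d}$, $T$ acts diagonally by $\e(-Q)$, which is $(-1)^m$ on $v_m$ and $+1$ on $v_0$. Hence $\chi(T)=(d+1)-d=1$. It then remains to evaluate $\chi(ST)$ and to show that
\[
2d+4+2\chi(ST)=6\bigl(\lfloor d/3\rfloor+1\bigr),
\]
that is, $\chi(ST)\in\{1,0,-1\}$ according as $d\equiv0,1,2\pmod 3$.

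I compute this trace as
\[
\chi(ST)=\sum_{m}\e(-Q(\cO_m))\,\bigl(S\bigr)_{mm},
\]
where $(S)_{mm}=2^{-4d}\sum_{\gamma\in\cO_m}\e((\gamma,x_m/2)_{L_d})$ for $m\ge1$. The term for $v_0=\ve_{\cO_0}+\ve_{\cO_{\mathrm v}}$ is handled the same way, using the coordinates of $Sv_0$ on $v_0$. These diagonal entries are Krawtchouk-type character sums over vectors with $2m$ odd coordinates. I would evaluate them with the same sign-indicator and $\cos$/$\sin$ product trick as in Lemma~\ref{lem:U-matrices}, packaged in a generating function in an auxiliary variable $t$ marking $m$. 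This turns $\sum_m(-1)^m(S)_{mm}$ into a coefficient extraction from a product of the shape $(1+\zeta t)^{a}(1-\zeta t)^{b}$ with $\zeta$ an eighth root of unity. The resulting sum should reduce to a real part of something like $(1+i)^{8d}$ times a cube-root-of-unity factor, which produces the period $3$ in $d$.

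\emph{Expected obstacle and fallback.} The main obstacle is this explicit evaluation of $\chi(ST)$, in particular the bookkeeping for $m=2d$, where complementation halves the orbit, and for the merged vector $v_0$. If it does not close cleanly, I would instead prove the lower bound constructively. The plan there is to exhibit $\lfloor d/3\rfloor+1$ independent invariant vectors as $G_d$-averaged coefficient vectors of singular-weight Jacobi forms. These would be pullbacks along $\jmath_d$ of products of $\Theta_{E_8}|_{D_8}$ (weight $4$ on each block) and their $W(D_8)$-translates, grouped in triples of blocks. Their independence would be shown through their distinct supports on the orbits $\cO_m$ with $m$ even, which are the only orbits where $\e(-Q)=1$ in $V_d^{\mathrm{cyc}}$. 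Together with the upper bound, this would give equality.
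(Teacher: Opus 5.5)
\textbf{The lower bound is never proved, so the proposal is incomplete.} Several parts are correct: the reduction to invariant constant vectors, the splitting $U_d=\mathbf 1\oplus\sigma$, the upper bound from Lemma~\ref{lem:zero-weight-bound}, and $\chi(T)=1$. You also correctly reduce the claim to $\chi(ST)=1,0,-1$ for $d\equiv0,1,2\pmod 3$.

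Carrying out your diagonal-entry computation gives a closed formula. A class in $\cO_m$ contributes $(-1)^{|I\cap J|}$, where $I,J$ are the odd-coordinate sets, and $\cO_{2d}$ is halved by complementation. One obtains
\[
\chi(ST)=2^{1-4d}\sum_{m=0}^{2d}(-1)^mK_m-2^{-4d}\binom{4d}{2d},
\qquad
K_m=[t^{2m}]\,(1-t)^{2m}(1+t)^{8d-2m}.
\]
This does equal $-1,1,0$ for $d=2,3,4$, so the route is viable. But proving this binomial identity for every $d$ is exactly what remains. Saying it ``should reduce to a real part of something like $(1+i)^{8d}$ times a cube-root-of-unity factor'' is not an argument.

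The fallback does not fill the gap either. $\Theta_{E_8}|_{D_8}$ is already $W(D_8)$-invariant, so its $W(D_8)$-translates add nothing. Even swapping $E_8$ for the other even unimodular overlattice of $D_8$ in one block is induced by an element of $G_d$. Under $\jmath_d$, the map $(x_1,x_2)\mapsto(-x_2,-x_1)$ on a coordinate pair becomes a single sign change in the $D_8$ block. Hence all such block products have the same $G_d$-average. You would need genuinely different overlattices across groups of blocks, and then proofs of nonvanishing and independence; none of this is given.

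\textbf{The missing idea} is that the lower bound is a dimension count using the cyclic basis you already invoke for the upper bound. By Corollary~\ref{cor:cyclic-basis},
\[
\dim M_0(\rho_d^{\mathrm{cyc}};\cE_d^{\mathrm{cyc}})=\sum_{j=0}^{2d}\dim M_{8d-2j}=d(d+1)+\left\lfloor\frac d3\right\rfloor+1 .
\]
A form $\sum_m f_mv_m$ in this space is holomorphic exactly when the $\lceil m/2\rceil$ coefficients of $f_m$ at negative exponents vanish. That is $\sum_{m=0}^{2d}\lceil m/2\rceil=d(d+1)$ linear conditions. Hence $\dim M_0(\rho_d^{\mathrm{cyc}})\ge\lfloor d/3\rfloor+1$. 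Together with your upper bound and $\dim M_0(\rho_d^U)=1$, this proves both formulas, and it is how the paper argues.
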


\begin{proof}
Put $a=\lfloor d/3\rfloor$.
Corollary~\ref{cor:cyclic-basis} and the scalar dimension formula give
\begin{equation}\label{eq:weak-zero-dim}
\dim M_0(\rho_d^{\mathrm{cyc}};\cE_d^{\mathrm{cyc}})
=\sum_{j=0}^{2d}\dim M_{8d-2j}
=d(d+1)+a+1.
\end{equation}
In coordinate $v_m$, holomorphy requires the vanishing of $\lceil m/2\rceil$ negative Fourier coefficients.
There are therefore
\[
\sum_{m=0}^{2d}\left\lceil\frac m2\right\rceil=d(d+1)
\]
linear conditions.
They reduce the dimension by at most $d(d+1)$.
Hence $\dim M_0(\rho_d^{\mathrm{cyc}})\ge a+1$.

All bounds in \eqref{eq:Ecyc} are nonpositive.
Lemma~\ref{lem:zero-weight-bound} applies with $r=2d+1$ and $k_0=-8d$.
It gives
\[
\dim M_0(\rho_d^{\mathrm{cyc}})\le\dim M_{4d}=a+1.
\]
The upper and lower bounds agree.

The proof of Lemma~\ref{lem:zero-weight-bound} identifies holomorphic weight-zero forms with invariant constant vectors.
Thus \eqref{eq:representation-splitting} gives
\[
M_0(\rho_d)\cong M_0(\rho_d^{\mathrm{cyc}})\oplus M_0(\rho_d^U).
\]
By Lemma~\ref{lem:U-matrices}, $U_d$ contains one trivial representation and one nontrivial irreducible representation.
Hence $\dim M_0(\rho_d^U)=1$, which proves the second equality.
\end{proof}

\begin{remark}
Identity \eqref{eq:weak-zero-dim} follows immediately by summing
\[
\dim M_k=
\begin{cases}
\lfloor k/12\rfloor+1,&k\not\equiv2\pmod{12},\\
\lfloor k/12\rfloor,&k\equiv2\pmod{12},
\end{cases}
\]
over the even weights $4d,4d+2,\ldots,8d$.
\end{remark}

\begin{theorem}\label{thm:holomorphic-weights}
The $\M$-module of holomorphic $G_d$-invariant Jacobi forms has generator weights
\[
{
\begin{array}{c|c}
\text{weight}&\text{multiplicity}\\ \hline
4d&\lfloor d/3\rfloor+2\\
4d+2&d-\lfloor d/3\rfloor+1\\
4d+4&d-\lfloor d/3\rfloor+1\\
4d+6&\lfloor d/3\rfloor
\end{array}}
\]
Consequently,
\[
\dim J_{4d,L_d}^{G_d}=\lfloor d/3\rfloor+2.
\]
For every even $k>4d$,
\[
\dim J_{k,L_d}^{\mathrm{w},G_d}-\dim J_{k,L_d}^{G_d}=d^2+2d+2.
\]
\end{theorem}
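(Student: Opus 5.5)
The plan is to work on the vector-valued side. By Theorem~\ref{thm:invariant-theta}, $J_{k+4d,L_d}^{G_d}\cong M_k(\rho_d)$, where the holomorphic system $\cE_{L_d,G_d}^{\mathrm{hol}}$ describes exactly the standard holomorphic forms $M_k(\rho_d)$. This holds because $\rho_d(T)^2=I$, so every exponent lies in $\frac12\Z$, and holomorphy means nonnegative exponents. Since the module is free of rank $2d+4$, its Hilbert series is
\[
\sum_k\dim M_k(\rho_d)\,t^k=\frac{\sum_i t^{w_i}}{(1-t^4)(1-t^6)}.
\]
So it suffices to compute $\dim M_k(\rho_d)$ for every even $k$ and read off the generator weights $w_i$. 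I will carry out three steps: compute the dimensions, determine the low-weight multiplicities, and then use the weight-sum criterion to show there are no further generators.

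\emph{Dimensions.} For $k<0$ we have $M_k(\rho_d)=0$, since $\rho_d$ has finite image: on $\ker\rho_d$ the components are holomorphic scalar forms of negative weight. At $k=0$, Proposition~\ref{prop:cyclic-invariants} gives $\dim M_0(\rho_d)=\lfloor d/3\rfloor+2$. For even $k\ge2$, I apply Proposition~\ref{prop:principal-parts} to the orbit basis, which is a $T$-eigenbasis, with $\mu_{\cO}=m(\cO)$. This gives
\[
\dim M_k(\rho_d;\cE_d^{\mathrm w})-\dim M_k(\rho_d)=\sum_{m=0}^{2d}\lceil m/2\rceil+1+\lceil d/2\rceil+\lceil (d+1)/2\rceil=d(d+1)+1+(d+1)=d^2+2d+2.
\]
This already yields the last assertion of the theorem, after shifting by $4d$. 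The weak dimensions come from Theorem~\ref{thm:exact-sequence}: the weak module is free with vector-valued generator weights $-8d+2j$ for $0\le j\le2d$, together with $-4d-8,-4d-6,-4d-4$. Hence
\[
\dim M_k(\rho_d;\cE_d^{\mathrm w})=\sum_{j=0}^{2d}\dim M_{k+8d-2j}+\dim M_{k+4d+8}+\dim M_{k+4d+6}+\dim M_{k+4d+4}.
\]

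\emph{Reading off generators.} Let $n_w$ be the number of generators of vector weight $w$. Then $n_w=0$ for $w<0$, and $n_0=\dim M_0(\rho_d)=\lfloor d/3\rfloor+2$. Because $M_2=0$ and $M_4,M_6$ are one-dimensional, freeness gives
\[
\dim M_2(\rho_d)=n_2,\qquad \dim M_4(\rho_d)=n_0+n_4,\qquad \dim M_6(\rho_d)=n_0+n_2+n_6 .
\]
Evaluating the formula above at $k=2,4,6$ should give $n_2=n_4=d-\lfloor d/3\rfloor+1$ and $n_6=\lfloor d/3\rfloor$. These multiplicities add up to $2d+4=$ rank, so no generator of weight $\ge8$ remains. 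As a consistency check, Corollary~\ref{cor:invariant-basis} requires $\sum_i w_i=12\tr\cE^{\mathrm{hol}}$. I compute $\tr\cE^{\mathrm{hol}}$ as follows: $\{-m/2\}$ summed over $m\le2d$ gives $d/2$, the orbit $\cO_{\mathrm v}$ contributes $0$, and the two spin orbits contribute $\frac12$ together. Hence $12\tr\cE^{\mathrm{hol}}=6d+6$. This should match $2n_2+4n_4+6n_6=6d+6$, which it does. Adding $4d$ to each weight gives the table, and $\dim J_{4d,L_d}^{G_d}=n_0$.

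The main obstacle is the bookkeeping at $k=2,4,6$. There, $\sum_{j}\dim M_{k+8d-2j}$ runs over the scalar weights $4d+k,\dots,8d+k$, and it has to be evaluated with the piecewise formula for $\dim M_k$, separating the residues of $d$ modulo~$3$ (and modulo~$2$ for the $k\equiv2\pmod{12}$ exceptions). I would handle this by writing each sum as the corresponding sum at $k=0$, namely $d(d+1)+\lfloor d/3\rfloor+1$, plus a boundary correction. Then I would subtract $d^2+2d+2$ and the cases $n_0$, $n_0+n_2$ as required, and check the three cases $d\equiv0,1,2\pmod3$ separately.
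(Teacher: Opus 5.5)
Your proposal is correct and follows the paper's proof essentially step for step. The paper likewise gets the difference $d^2+2d+2$ from Proposition~\ref{prop:principal-parts}, $h_0=\lfloor d/3\rfloor+2$ from Proposition~\ref{prop:cyclic-invariants}, and $h_2,h_4,h_6$ from the weak generator weights of Theorem~\ref{thm:exact-sequence}; it then solves $h_0=b_0$, $h_2=b_2$, $h_4=b_4+b_0$, $h_6=b_6+b_2+b_0$ and uses rank $2d+4$ to exclude further generators. Your weight-sum check against $12\tr\cE^{\mathrm{hol}}=6d+6$ is a valid extra consistency check that the paper does not use.
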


\begin{proof}
For positive even $w$, the space $S_{2-w}(\rho_d^\dual)$ is zero because $\rho_d$ has finite image.
Proposition~\ref{prop:principal-parts} and \eqref{eq:Ed-weak} give
\begin{equation}\label{eq:Dfamily-principal-difference}
\begin{aligned}
&\dim M_w(\rho_d;\cE_d^{\mathrm{w}})-\dim M_w(\rho_d)\\
&\qquad=\sum_{m=0}^{2d}\left\lceil\frac m2\right\rceil
+1+\left\lceil\frac d2\right\rceil+\left\lceil\frac{d+1}{2}\right\rceil\\
&\qquad=d(d+1)+1+(d+1)=d^2+2d+2.
\end{aligned}
\end{equation}
Theta decomposition shifts weight $w$ to Jacobi weight $w+4d$.
This proves the dimension difference.

Put $a=\lfloor d/3\rfloor$ and $h_w=\dim M_w(\rho_d)$.
Proposition~\ref{prop:cyclic-invariants} gives $h_0=a+2$.
For $w=2,4,6$, compute the weak dimensions from Theorem~\ref{thm:exact-sequence} and subtract \eqref{eq:Dfamily-principal-difference}.
The scalar dimension formula gives
\[
\begin{array}{c|cccc}
w&0&2&4&6\\ \hline
h_w&a+2&d-a+1&d+3&d+a+3
\end{array}
\]

There are no forms of negative weight, since $\rho_d$ has finite image.
Theorem~\ref{thm:freeness} makes $M(\rho_d)$ free of rank $2d+4$.
Let $b_w$ be the number of generators of weight $w$.
Since $M_2=0$ and $\dim M_0=\dim M_4=\dim M_6=1$,
\[
h_0=b_0,\qquad h_2=b_2,\qquad h_4=b_4+b_0,\qquad h_6=b_6+b_2+b_0.
\]
Solving gives
\[
(b_0,b_2,b_4,b_6)=(a+2,d-a+1,d-a+1,a).
\]
These multiplicities sum to $2d+4$, so there are no further generators.
Adding $4d$ to each weight gives the stated Jacobi weights.
This proves the holomorphic assertion of Theorem~C.
\end{proof}

\section{Weyl invariant Jacobi forms of lattice index
\texorpdfstring{$D_{8d}^{+}(p)$, $p=3,5$}{D8d+(p), p=3,5}}
\label{sec:odd-indices}

We determine the holomorphic generator weights for lattice indices
$D_{8d}^{+}(3)$ and $D_{8d}^{+}(5)$. We first express the invariant
coefficient representations as sums of symmetric powers of
representations of dimensions at most three. We then prove that they
factor through $A_4$ and $A_5$, respectively, and compute their
irreducible multiplicities from the character tables. The standard
holomorphic cusp conditions respect these decompositions, so the
corresponding modules are direct sums of those attached to the
irreducible representations. Applying Section~\ref{sec:rankthree} and
adding the theta weight $4d$ gives the generator weights and their
multiplicities in Theorem~D.

Let $d\ge2$ and retain
$G_d=\OO(D_{8d}^+)=W(D_{8d}^+)=W(D_{8d})$ from \eqref{eq:Gd-Weyl}.
For $p=3,5$, let
\[
A_{d,p}=D_{8d}^+/pD_{8d}^+,\qquad
V_{d,p}=\C[A_{d,p}]^{G_d},\qquad
\rho_{d,p}=\rho_{D_{8d}^+(p)}^{G_d}.
\]
Unimodularity identifies the discriminant group of $D_{8d}^+(p)$ with
$A_{d,p}$ by $x+pD_{8d}^+\mapsto x/p+D_{8d}^+$.
Theorem~\ref{thm:invariant-theta} gives
\begin{equation}\label{eq:odd-theta}
J_{K,D_{8d}^+(p)}^{G_d}\cong M_{K-4d}(\rho_{d,p}),
\end{equation}
where the module on the right uses the standard holomorphic conditions.
Since $-I\in G_d$, every $G_d$-invariant Jacobi form has even weight.

Let $E_p=\C[\mathbb F_p]$, with basis $(\ve_a)_{a\in\mathbb F_p}$,
and put $\zeta_p=\e(1/p)$.  Choose $\bar2$ with
$2\bar2\equiv1\pmod p$, and define
\[
H_p\ve_a=p^{-1/2}\sum_b\zeta_p^{ab}\ve_b,\qquad
T_p\ve_a=\zeta_p^{-\bar2a^2}\ve_a,\qquad
P_p\ve_a=\ve_{-a}.
\]
The normalized Gauss sums are
\[
\gamma_p=p^{-1/2}\sum_a\zeta_p^{-\bar2a^2},
\qquad
\gamma_3=i,\qquad\gamma_5=-1.
\]
Indeed, $1+2\zeta_3=i\sqrt3$ and
$1+2\zeta_5^2+2\zeta_5^3=-\sqrt5$.
Set
\begin{equation}\label{eq:local-rep}
\sigma_p(S)=\gamma_p^{-1}H_p,\qquad
\sigma_p(T)=T_p.
\end{equation}
Let $E_p^\pm$ be the two eigenspaces of $P_p$ and let
$\sigma_p^\pm$ denote the restrictions of these operators.

\begin{proposition}\label{prop:odd-invariants}
We have a $G_d$-equivariant isomorphism $A_{d,p}\cong\mathbb F_p^{8d}$.
The operators \eqref{eq:local-rep} define a representation of
$\Gammaone$, and
\begin{equation}\label{eq:main-sym}
\rho_{d,p}\cong
\Sym^{8d}(\sigma_p^+)\oplus\Sym^{8d}(\sigma_p^-).
\end{equation}
\end{proposition}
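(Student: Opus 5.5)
The plan is to identify the full coefficient representation $\rho_{D_{8d}^+(p)}$ on $\C[A_{d,p}]$ with the tensor power $\sigma_p^{\otimes 8d}$, and then take $G_d$-invariants factor by factor.

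\emph{Step 1: the discriminant group.} Since $p$ is odd and $2D_{8d}^+\subset\Z^{8d}$ and $D_{8d}\subset D_{8d}^+$ are inclusions of $2$-power index, reduction modulo $p$ gives canonical isomorphisms
\[
D_{8d}^+/pD_{8d}^+\cong D_{8d}/pD_{8d}\cong\Z^{8d}/p\Z^{8d}=\mathbb F_p^{8d}.
\]
For $D_{8d}^+/pD_{8d}^+\cong\Z^{8d}/p\Z^{8d}$, I would use $2D_{8d}^+\subset\Z^{8d}\subset\tfrac12\Z^{8d}$ and invert $2$ modulo $p$. These isomorphisms commute with signed coordinate permutations, so they are $G_d$-equivariant, where $G_d$ acts on $\mathbb F_p^{8d}$ by permutations and an even number of sign changes. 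Under this identification, for $x,y\in\mathbb F_p^{8d}$ I compute
\[
Q(x/p)\equiv\frac{\bar2\sum_ix_i^2}{p},\qquad
(x/p,y/p)\equiv\frac{\sum_ix_iy_i}{p}\pmod\Z.
\]
The first congruence uses that $(x,x)/2$ is represented by $\bar2(x,x)$ modulo $p$ when $x$ is integral, with care for the spin coset via the integral representative.

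\emph{Step 2: tensor factorization.} From the formulas for $\rho_L(T)$ and $\rho_L(S)$ in Section~\ref{sec:jacobi}, with $n=8d$, I read off
\[
\rho_L(T)=T_p^{\otimes8d},\qquad
\rho_L(S)=\e(d)\,p^{-4d}\sum\zeta_p^{\sum x_iy_i}=H_p^{\otimes 8d}.
\]
Since $\gamma_p^4=1$ for $p=3,5$, this also equals $\sigma_p(S)^{\otimes8d}$. That $\sigma_p$ itself is a representation of $\Gammaone$ I would verify directly: $H_p^2=P_p$ gives $\sigma_p(S)^2=\gamma_p^{-2}P_p$, which is central, and $(\sigma_p(S)\sigma_p(T))^3=\sigma_p(S)^2$ is the standard quadratic Gauss sum identity, which is exactly where the normalization $\gamma_p$ enters. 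Alternatively, $\sigma_p$ is up to a character the Weil representation of the rank-one form $\bar2a^2/p$, and the scalar ambiguity is removed by $\gamma_p$. This normalization bookkeeping, together with the sign convention in the contragredient, is the step I expect to be most delicate; the check is the explicit value of $\gamma_p$.

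\emph{Step 3: invariants.} Write $E_p=E_p^+\oplus E_p^-$. Since $P_p$ commutes with $H_p$ and $T_p$, each $E_p^\pm$ is $\Gammaone$-stable. Then $E_p^{\otimes8d}$ is the direct sum of the tensors $E^{\epsilon_1}\otimes\cdots\otimes E^{\epsilon_{8d}}$. Changing the sign of coordinate $i$ acts on such a summand by $\epsilon_i$. The subgroup of even sign changes acts trivially exactly when all $\epsilon_i$ agree, because sign changes at $i$ and $j$ act by $\epsilon_i\epsilon_j$. Hence the invariants under even sign changes are $(E^+)^{\otimes8d}\oplus(E^-)^{\otimes8d}$. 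Taking $S_{8d}$-invariants then gives $\Sym^{8d}E^+\oplus\Sym^{8d}E^-$. All these operations commute with the diagonal $\Gammaone$-action, which yields \eqref{eq:main-sym}.
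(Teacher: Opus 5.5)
Your proposal is correct and follows essentially the same route as the paper. The steps match: the $G_d$-equivariant identification $A_{d,p}\cong\mathbb F_p^{8d}$ (you via the prime-to-$p$ index inclusions $D_{8d}\subset D_{8d}^+$ and $D_{8d}\subset\Z^{8d}$, the paper via the explicit map $x\mapsto\bar2(2x)$ and a cardinality count), the factorization of the coefficient representation as $\sigma_p^{\otimes8d}$ using $\e(d)=1$ and $\gamma_p^{8d}=1$, the Gauss-sum identity $(H_pT_p)^3=\gamma_pP_p$ (which the paper verifies by completing the square), and finally invariants under even sign changes followed by $S_{8d}$.

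One small correction: alongside $(\sigma_p(S)\sigma_p(T))^3=\sigma_p(S)^2$, the relation you must check is $\sigma_p(S)^4=\gamma_p^{-4}P_p^2=I$. Centrality of $\sigma_p(S)^2$ is not the relation needed, since it already follows from the braid relation. The fix is immediate from what you wrote.
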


\begin{proof}
We identify the discriminant group, compare the Weil operators, and take $G_d$-invariants.
Since $p$ is odd, reduction of integral and half-integral coordinates is defined in $\mathbb F_p$.
The map is
\[
x+pD_{8d}^+\longmapsto
\bigl(\bar2(2x_1\bmod p),\ldots,\bar2(2x_{8d}\bmod p)\bigr).
\]
Adding an element of $pD_{8d}^+$ does not change the image.
To prove surjectivity, lift a residue vector to $z\in\Z^{8d}$.
If $\sum_i z_i$ is odd, replace $z_1$ by $z_1+p$.
The residues do not change, and the adjusted vector lies in $D_{8d}$.
Both $D_{8d}^+/pD_{8d}^+$ and $\mathbb F_p^{8d}$ have $p^{8d}$ elements, so the map is an isomorphism.
Reduction commutes with coordinate permutations and sign changes.
Thus the isomorphism is $G_d$-equivariant.

Integral and half-integral vectors can represent the same class.
For example, $\frac{p+1}{2}(1,\ldots,1)\in D_{8d}$ has the same image as $h$, since
\[
\frac{p+1}{2}(1,\ldots,1)-h=ph\in pD_{8d}^+.
\]

The quadratic value of $x/p+D_{8d}^+$ is $(x,x)/(2p)$.
The corresponding theta function has $T$-eigenvalue $\e((x,x)/(2p))$.
Its theta coefficient has the reciprocal eigenvalue.
Since $(x,x)/2$ is an integer, its reduction modulo $p$ is $\bar2\sum_i x_i^2$.
The discriminant pairing is $(x,y)/p$.
Its numerator reduces to $\sum_i x_i y_i$.
Writing $x,y\in\mathbb F_p^{8d}$ for the reduced coordinates gives
\begin{equation}\label{eq:lattice-ST}
\rho_{D_{8d}^+(p)}(T)\ve_x
 =\zeta_p^{-\bar2\scriptsum_i x_i^2}\ve_x,\qquad
\rho_{D_{8d}^+(p)}(S)\ve_x
 =p^{-4d}\sum_y\zeta_p^{\scriptsum_i x_i y_i}\ve_y.
\end{equation}
The signature factor is $\e(d)=1$.

We now verify the modular group relations for \eqref{eq:local-rep}.
Direct summation gives $H_p^2=P_p$.
Completing the square gives
\[
(H_pT_p)^2_{c,a}
 =\frac1p\zeta_p^{-\bar2a^2}
       \sum_b\zeta_p^{-\bar2b^2+b(a+c)}
 =\frac{\gamma_p}{\sqrt p}\zeta_p^{ac+\bar2c^2}.
\]
Multiplying once more yields
\[
(H_pT_p)^3_{b,a}
 =\frac{\gamma_p}{p}\sum_c\zeta_p^{c(a+b)}
 =\gamma_p\,\mathbf1_{\{b=-a\}}.
\]
Since $\gamma_p^4=1$, we obtain
\[
\sigma_p(S)^4=I,\qquad
(\sigma_p(S)\sigma_p(T))^3=\sigma_p(S)^2.
\]
The operators therefore define a representation of $\Gammaone$.
They commute with $P_p$, so they preserve $E_p^\pm$.
Equation~\eqref{eq:lattice-ST} identifies the full coefficient representation with $\sigma_p^{\otimes8d}$.
The factors $\gamma_p^{-1}$ in $\sigma_p(S)$ multiply to $\gamma_p^{-8d}=1$.

Write $G_d=H\rtimes S_{8d}$, where $H$ changes the signs of an even number of coordinates.
Decompose $E_p^{\otimes8d}$ according to the choices of $E_p^+$ and $E_p^-$ in each factor.
Suppose a tensor component contains a factor of each kind.
Changing the signs in one coordinate of each kind acts by $-1$.
Thus this component has no $H$-invariant vectors.
The two components with all factors of the same kind are fixed by $H$.
Hence
\[
(E_p^{\otimes8d})^H=(E_p^+)^{\otimes8d}\oplus(E_p^-)^{\otimes8d}.
\]
Taking $S_{8d}$-invariants gives the symmetric powers in \eqref{eq:main-sym}.
\end{proof}

\begin{proposition}\label{prop:odd-quotients}
The invariant representation $\rho_{d,p}$ factors through $A_4$
for $p=3$ and through $A_5$ for $p=5$.
\end{proposition}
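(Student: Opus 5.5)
Using Proposition~\ref{prop:odd-invariants}, $\rho_{d,p}\cong\Sym^{8d}(\sigma_p^+)\oplus\Sym^{8d}(\sigma_p^-)$. It therefore suffices to show that each of $\sigma_p^\pm$ factors through a finite group whose image, after passing to the symmetric powers, is $A_4$ (for $p=3$) or $A_5$ (for $p=5$). I would compute in the explicit bases. Take $E_p^+$ with basis $\ve_0$ and $\ve_a+\ve_{-a}$ for $a\ne0$ up to sign, and take $E_p^-$ with basis $\ve_a-\ve_{-a}$. This gives $\dim\sigma_3^+=2$, $\dim\sigma_3^-=1$, $\dim\sigma_5^+=3$, and $\dim\sigma_5^-=2$. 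In these coordinates $\sigma_p(T)$ is diagonal with eigenvalues among the $p$th roots of unity, so $\sigma_p(T)^p=I$. Also $\sigma_p(S)^2=\gamma_p^{-2}P_p$ acts on $E_p^\pm$ by the scalar $\pm\gamma_p^{-2}$.

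The key steps are as follows. First, I would note that $\rho_{d,p}(-I)$ is trivial. The factor $\gamma_p^{-2}$ raised to the $8d$-th power equals $1$, and $P_p^{\otimes 8d}$ acts by $(\pm1)^{8d}=1$ on each summand. Hence $\rho_{d,p}(S)^2=I$. Second, I would show that $\rho_{d,p}(T)^p=I$, which is immediate because $T_p^p=I$. The relation $(ST)^3=S^2=I$ holds automatically. So $\rho_{d,p}$ factors through the quotient
\[
\langle S,T: S^2=T^p=(ST)^3=1\rangle,
\]
which is the triangle group $(2,3,p)$. This group is finite for $p=3,5$ and is isomorphic to $A_4$ and $A_5$ respectively, since it equals $\mathrm{PSL}_2(\Z/p\Z)$ in these cases.

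As a cross-check, I would verify this identification directly. The von Dyck group $\Delta(2,3,3)$ is the tetrahedral group $A_4$, and $\Delta(2,3,5)$ is the icosahedral group $A_5$. This is the same mechanism that produced the factorization through $S_3$ via $\langle S,T:S^2=T^2=(ST)^3\rangle$ in the $D_{8d}^+(2)$ case. Since the argument only needs a surjection from the presented group onto the image, it is cleanest simply to cite the standard presentations.

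The main obstacle is the sign bookkeeping in the first step. One must confirm that $\rho_{d,p}(S)^2$ really is the identity on the invariant space, rather than only $\pm I$. The Gauss-sum factor enters to the power $8d$, and $\gamma_p^{-2}=\gamma_p^2=\pm1$ since $\gamma_p^4=1$, so its $8d$-th power is $1$. The $P_p$ factor acts on $\ve_x$ by $\ve_x\mapsto\ve_{-x}$. I would check that on $G_d$-invariants this action is trivial. This holds because $-I\in G_d$, so invariant vectors are automatically fixed by negation. That observation settles the step without any computation in the symmetric powers.
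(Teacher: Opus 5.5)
Your argument is correct and is essentially the paper's: you kill $S^2$ using $-I\in G_d$ and $T^p$ using the $p$th-root-of-unity eigenvalues of $T_p$, and then, via $\Gammaone=\langle S,T\mid S^4=I,\ (ST)^3=S^2\rangle$, the quotient by the normal closure of $S^2,T^p$ is the von Dyck group $\langle S,T\mid S^2=T^p=(ST)^3=1\rangle\cong A_p$. The paper makes that last isomorphism explicit through Coxeter--Moser and concrete permutations defining $\theta_p$, whereas you cite the classical tetrahedral/icosahedral identification; your opening reduction to the individual $\sigma_p^\pm$ is unnecessary (indeed $\sigma_3^+(S)^2=-I$), but your actual argument never uses it.
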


\begin{proof}
Put $A_p=A_4$ for $p=3$ and $A_p=A_5$ for $p=5$.
By \cite[Section~6.4]{CM80},
\[
A_p\cong\langle r,u\mid r^2=u^3=(ru)^p=1\rangle.
\]
Set $s=r$ and $t=ru$.
Then $st=u$, so equivalently
\[
A_p\cong\langle s,t\mid s^2=(st)^3=t^p=1\rangle.
\]
Composing permutations from right to left, choose
\[
\begin{array}{c|ccc}
p&s&t&st\\ \hline
3&(12)(34)&(243)&(123)\\
5&(12)(34)&(14352)&(135)
\end{array}
\]
These permutations satisfy the relations.
For $p=3$, conjugating $s$ by $st$ gives the other double transpositions.
Together with $st$, they generate $A_4$.
For $p=5$, the generated subgroup has order divisible by $2$, $3$, and $5$.
Its order is therefore $30$ or $60$.
The first possibility would give a nontrivial map $A_5\to\Z/2\Z$.
Such a map kills every three-cycle, but three-cycles generate $A_5$.
Hence the subgroup is $A_5$.

The presentation $\Gammaone=\langle S,T\mid S^4=I,\ (ST)^3=S^2\rangle$ gives a surjection
\[
\theta_p:\Gammaone\longrightarrow A_p,\qquad
\theta_p(S)=s,\quad\theta_p(T)=t.
\]
To prove that $\rho_{d,p}$ factors through $\theta_p$, we identify $\ker\theta_p$.
Let $N$ be the normal subgroup generated by $S^2=-I$ and $T^p$.
Both elements lie in $\ker\theta_p$.
Taking the quotient adds precisely these two relations, so
\[
\Gammaone/N
=\langle\overline S,\overline T\mid
\overline S^2=(\overline S\overline T)^3=\overline T^p=1\rangle.
\]
This is the presentation of $A_p$ above.
Thus the induced map $\Gammaone/N\to A_p$ is an isomorphism and $\ker\theta_p=N$.

Global negation acts trivially on $V_{d,p}$, so $\rho_{d,p}(S^2)=I$.
Equation~\eqref{eq:lattice-ST} gives $\rho_{d,p}(T^p)=I$.
Therefore $\rho_{d,p}$ kills $N=\ker\theta_p$ and factors through $A_p$.
\end{proof}

The standard holomorphic condition respects invariant direct sums.
On a $T$-eigenspace with eigenvalue $\e(\lambda)$,
it uses the unique exponent $0\le\lambda<1$.
Every equivariant projection commutes with $T$ and preserves this bound.
Consequently,
\begin{equation}\label{eq:canonical-splitting}
\rho=\bigoplus_i\rho_i
\quad\Longrightarrow\quad
M(\rho)\cong\bigoplus_i M(\rho_i).
\end{equation}

We shall also use
\begin{equation}\label{eq:sym-character}
\sum_{k\ge0}\chi_{\Sym^k U}(g)u^k
 =\frac1{\det(1-u\,g|_U)}.
\end{equation}
For eigenvalues $\lambda_1,\ldots,\lambda_a$ of $g$, the monomials
in $\Sym^k U$ have eigenvalues
$\lambda_1^{j_1}\cdots\lambda_a^{j_a}$ with $\sum_i j_i=k$.
Summing over all monomials and all $k\ge0$ gives \eqref{eq:sym-character}.

The general character-theoretic statements below are classical;
see \cite[Sections~1.3--1.4 and~2.1--2.4]{Serre77}.

\begin{proposition}\label{prop:odd-characters}
Every finite-dimensional complex representation of a finite group $G$
is a direct sum of irreducible representations. If $X$ has character $\chi_X$ and
$\psi$ is an irreducible character, its multiplicity in $X$ is
\[
\langle\chi_X,\psi\rangle_G
 =\frac1{|G|}\sum_{g\in G}\chi_X(g)\overline{\psi(g)}.
\]
A nonzero representation $X$ is irreducible if and only if
$\langle\chi_X,\chi_X\rangle_G=1$.
A list of pairwise nonisomorphic irreducible representations is complete if and
only if the sum of their squared dimensions is $|G|$.
Moreover,
\[
\chi_{\Sym^2 X}(g)
 =\tfrac12\bigl(\chi_X(g)^2+\chi_X(g^2)\bigr).
\]

For $A_4$, the classes $1,2A,3A,3B$ are represented by
$1,\theta_3(S),\theta_3(T),\theta_3(T^2)$.
With $\zeta=\e(1/3)$, its irreducible character table is as follows;
see \cite[Section~5.7]{Serre77}.
\begin{equation}\label{eq:a4-table}
\begin{array}{c|rrrr}
 &1&2A&3A&3B\\ \hline
 \text{class size}&1&3&4&4\\
 \mathbf1&1&1&1&1\\
 \chi&1&1&\zeta&\zeta^2\\
 \chi^2&1&1&\zeta^2&\zeta\\
 \varrho&3&-1&0&0
\end{array}
\end{equation}
For $A_5$, the classes $1,2A,3A,5A,5B$ are represented by
$1,\theta_5(S),\theta_5(ST),\theta_5(T),\theta_5(T^2)$.
Put $\varphi=(1+\sqrt5)/2$ and $\varphi'=(1-\sqrt5)/2$.
Its irreducible character table is as follows;
see \cite[Section~3.1, Exercise~3.5]{FH91}.
\begin{equation}\label{eq:a5-table}
\begin{array}{c|rrrrr}
 &1&2A&3A&5A&5B\\ \hline
 \text{class size}&1&15&20&12&12\\
 \mathbf1&1&1&1&1&1\\
 \varrho_3&3&-1&0&\varphi&\varphi'\\
 \varrho'_3&3&-1&0&\varphi'&\varphi\\
 \varrho_4&4&0&1&-1&-1\\
 \varrho_5&5&1&-1&0&0
\end{array}
\end{equation}
\end{proposition}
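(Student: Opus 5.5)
This proposition is a collection of classical facts, so the plan is to cite and check rather than to derive them from scratch. The general statements come straight from \cite{Serre77}. Complete reducibility is Maschke's theorem: average a Hermitian form over $G$ to make the representation unitary, then take orthogonal complements. The multiplicity formula and the irreducibility criterion follow from the orthogonality relations for irreducible characters. The completeness criterion is the identity $\sum\dim^2=|G|$ obtained from the regular representation. For the $\Sym^2$ formula, diagonalize $g$ on $X$ with eigenvalues $\lambda_i$. Then $\chi_{\Sym^2X}(g)=\sum_{i\le j}\lambda_i\lambda_j$, and this equals $\frac12\bigl((\sum\lambda_i)^2+\sum\lambda_i^2\bigr)$. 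It is also the $u^2$ coefficient of \eqref{eq:sym-character}.

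The substantive part is matching the conjugacy classes with the named images of $S$, $T$, $T^2$, $ST$. For this I use the explicit permutations from the proof of Proposition~\ref{prop:odd-quotients}.

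For $A_4$:
\begin{itemize}
\item $\theta_3(S)=(12)(34)$ is a double transposition, so it lies in the class of size $3$.
\item $\theta_3(T)=(243)$ and $\theta_3(T^2)=(234)$ are $3$-cycles.
\item These two $3$-cycles are mutually inverse. In $A_4$ a $3$-cycle is not conjugate to its inverse, so they represent the two distinct classes $3A$ and $3B$, each of size $4$.
\end{itemize}
The labels are a convention. I fix it so that $\chi(T)=\zeta$. With that choice, $\chi$ is the character of $A_4/V_4\cong\Z/3$ sending $\theta_3(T)$ to $\zeta$. The $3$-dimensional character $\varrho$ is then forced by $1+1+1+9=12$ and by column orthogonality.

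For $A_5$:
\begin{itemize}
\item $\theta_5(S)$ is a double transposition, giving class $2A$ of size $15$.
\item $\theta_5(ST)=(135)$ is a $3$-cycle, giving class $3A$ of size $20$.
\item $\theta_5(T)=(14352)$ is a $5$-cycle.
\item $\theta_5(T^2)$ is the square of that $5$-cycle. In $A_5$, a $5$-cycle and its square lie in the two different classes of size $12$, since squaring swaps $5A$ and $5B$.
\end{itemize}
The table is then the standard one from \cite{FH91}, with $\varrho_3$ and $\varrho_3'$ distinguished by which $5$-class carries $\varphi$. As a check, the squared dimensions give $1+9+9+16+25=60$.

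The only delicate step is this labelling convention: making sure the assignment of $\zeta$ versus $\zeta^2$, and of $\varphi$ versus $\varphi'$, matches the chosen representatives $\theta_p(T)$. Since the tables are defined only up to such relabelling, I will state the convention explicitly rather than derive it.
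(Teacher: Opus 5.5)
Your proposal is correct and matches the paper, which gives no argument beyond citing \cite{Serre77} for the general facts and \cite{Serre77,FH91} for the two character tables. Your check that $\theta_3(S),\theta_3(T),\theta_3(T^2)$ and $\theta_5(S),\theta_5(ST),\theta_5(T),\theta_5(T^2)$ lie in the stated classes makes explicit what the paper leaves implicit. You do this with the permutations from the proof of Proposition~\ref{prop:odd-quotients}. Your labelling conventions, $\chi(\theta_3(T))=\zeta$ and $\varrho_3(\theta_5(T))=\varphi$, agree with how the tables are used later in the paper.
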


\subsection{Lattice index \texorpdfstring{$D_{8d}^{+}(3)$}{D8d+(3)}}

The representation $\rho_{d,3}$ is a direct sum of irreducible representations
of $A_4$, each of dimension one or three. We compute their multiplicities
and apply the results of Section~\ref{sec:rankthree} to determine the
generator weights of the holomorphic Jacobi module.

\begin{theorem}\label{thm:holomorphic-index-three}
For $j=0,1,2$, put
\[
a_j(d)=\left\lfloor\frac{2d-j}{3}\right\rfloor+1+
\mathbf1_{\{j\equiv2d\pmod3\}}.
\]
The last term is one when $j\equiv2d\pmod3$ and zero otherwise.
The module $J_{*,D_{8d}^+(3)}^{G_d}$ is free of rank $8d+2$,
with generator weights and multiplicities
\[
\begin{array}{c|ccccc}
\text{weight}&4d&4d+2&4d+4&4d+6&4d+8\\ \hline
\text{multiplicity}&a_0(d)&2d&2d+a_1(d)&2d&a_2(d)
\end{array}
\]
\end{theorem}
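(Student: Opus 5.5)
The plan is to decompose $\rho_{d,3}$ into irreducible representations of $A_4$ and then read off generator weights summand by summand.

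By Proposition~\ref{prop:odd-invariants}, $\rho_{d,3}\cong\Sym^{8d}(\sigma_3^+)\oplus\Sym^{8d}(\sigma_3^-)$. Here $E_3^+=\Span\{\ve_0,\ve_1+\ve_2\}$ is two-dimensional and $E_3^-=\C(\ve_1-\ve_2)$ is one-dimensional. Hence the second summand is a character and the first has dimension $8d+1$, so the rank is $8d+2$.

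By Proposition~\ref{prop:odd-quotients} everything factors through $A_4$. By \eqref{eq:canonical-splitting}, $M(\rho_{d,3})$ is the direct sum of the modules attached to the irreducible constituents $\mathbf1,\chi,\chi^2,\varrho$.

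I will compute the characters at the class representatives $1,\theta_3(S),\theta_3(T),\theta_3(T^2)$ of \eqref{eq:a4-table}. Since $\bar2=2$ for $p=3$, $T_3\ve_a=\zeta^{a^2}\ve_a$. Thus $T$ has eigenvalues $1,\zeta$ on $E_3^+$ and $\zeta$ on $E_3^-$. On $\Sym^{8d}(E_3^+)$ the eigenvalues of $T$ are therefore $\zeta^j$ for $0\le j\le 8d$, and on the character summand the eigenvalue is $\zeta^{8d}=\zeta^{2d}$.

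For $S$, I claim $\sigma_3(S)|_{E_3^+}$ is an involution with eigenvalues $1,-1$. It squares to the identity because $H_3^2=P_3$ and $\gamma_3^2=-1$ is compensated appropriately; I will verify that its trace is $0$, which is what forces the image to be the class $2A$ rather than the identity. Granting this, \eqref{eq:sym-character} gives $\chi_{\Sym^{8d}}(S)=\sum_{j=0}^{8d}(-1)^j=1$. The character summand takes the value $1$ at $S$, as it must, since $S$ has order $2$ in $A_4$.

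The inner products in Proposition~\ref{prop:odd-characters} then give the multiplicities. Writing $n_i=\#\{0\le j\le8d:j\equiv i\pmod 3\}$, the traces at $T$ and $T^2$ are $\sum_i n_i\zeta^{\pm i}$. The multiplicity of $\varrho$ comes out as $(8d+1-1)/4=2d$. The one-dimensional characters $\chi^i$ occur with multiplicities built from the $n_i$, together with one extra copy of $\chi^{2d}$ from $\Sym^{8d}(\sigma_3^-)$. As a check, the one-dimensional multiplicities sum to $8d+2-6d=2d+2$.

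For the generator weights, I will use the results of Section~\ref{sec:rankthree}:
\begin{itemize}
\item For a character $\chi^i$, the holomorphic exponent is the representative $c\in\{0,\tfrac13,\tfrac23\}$ of its $T$-eigenvalue. Lemma~\ref{lem:rank-one} gives weight $12c\in\{0,4,8\}$.
\item For $\varrho$, the $T$-eigenvalues are $1,\zeta,\zeta^2$, so $\tr\cE=0+\tfrac13+\tfrac23=1$. Lemma~\ref{lem:rank-three-irreducible} gives weights $2,4,6$.
\end{itemize}
Adding $4d$ via \eqref{eq:odd-theta} yields the table. Weights $4d+2$ and $4d+6$ come only from $\varrho$, which accounts for the multiplicities $2d$. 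Weight $4d+4$ collects $2d$ from $\varrho$ plus the multiplicity of the character with exponent $1/3$.

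The remaining identification is to match the explicit count of $j\in[0,8d]$ in each residue class mod $3$, plus the extra $\chi^{2d}$, with $a_j(d)$. The trivial, $\chi$ and $\chi^2$ characters have exponents $0,\tfrac23,\tfrac13$ respectively, because $\rho(T)$ acts on theta coefficients by the inverse eigenvalue; I must track this sign carefully. I expect this bookkeeping, together with correctly pinning down the $S$-trace and the sign conventions for the exponents, to be the main obstacle. Once the residue counts $n_i=\lfloor(8d-i)/3\rfloor+1$ are written out and combined with the $\mathbf 1_{\{j\equiv 2d\}}$ term, the formula for $a_j(d)$ should follow directly.
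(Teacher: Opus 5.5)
Your route is the paper's: decompose $\rho_{d,3}\cong\Sym^{8d}(\sigma_3^+)\oplus\sigma_3^-$, factor through $A_4$, use $\chi_{V_{d,3}}(S)=2$ to get $2d$ copies of $\varrho$, read the one-dimensional multiplicities off the $T$-eigenvalue counts, then apply Lemma~\ref{lem:rank-one}, Lemma~\ref{lem:rank-three-irreducible} and the shift by $4d$. Your counts are right: the character with $T$-value $\zeta^j$ occurs $a_j(d)$ times.

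The step that fails is your exponent assignment, ``$\mathbf1,\chi,\chi^2$ have exponents $0,\tfrac23,\tfrac13$ because $\rho(T)$ acts on theta coefficients by the inverse eigenvalue.'' That inversion has already been made. By \eqref{eq:lattice-ST}, $\sigma_3(T)=T_3$ is the coefficient operator $\rho_{D_{8d}^+(3)}(T)=\omega_L(T)^{-\mathsf t}$; the minus sign in $\zeta_p^{-\bar2a^2}$ is exactly the passage from $\omega_L$ to $\rho_L$. On the $\chi$-line one therefore has $\rho_{d,3}(T)=\chi(T)=\zeta=\e(1/3)$, and by Definition~\ref{def:exponent-conditions} the holomorphic exponent is $1/3$. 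You can check this directly: $h(x/3+D_{8d}^+)=\{-(x,x)/6\}$, and since $(x,x)/2\equiv\bar2\sum_i x_i^2$ and $-\bar2\equiv1\pmod 3$, this equals $\{(\sum_i x_i^2)/3\}$, which matches the eigenvalue $\zeta^{\sum_i x_i^2}$. So $\mathbf1,\chi,\chi^2$ have exponents $0,\tfrac13,\tfrac23$ and weights $0,4,8$, as your own earlier rule says.

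Carried through, your assignment puts $a_2(d)$ copies at $4d+4$ and $a_1(d)$ at $4d+8$, so $a_1$ and $a_2$ are swapped. These genuinely differ. For $d=2$, $(a_0,a_1,a_2)=(2,3,1)$, so you would get multiplicities $2,4,5,4,3$ instead of $2,4,7,4,1$. With the correct exponents, $\tr\cE=(7\cdot1+5\cdot2)/3=17/3$, so $12\tr\cE=68$. The correct list attains this ($8+28+24+8$), while yours gives $8+20+24+24=76$, which Theorem~\ref{thm:basis-criterion} excludes.

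A smaller inaccuracy: $\sigma_3(S)|_{E_3^+}$ is not an involution. On $E_3^+$ you have $H_3^2=P_3=I$ and $\gamma_3^{-2}=-1$, so $\sigma_3(S)^2=-I$ there and its eigenvalues are $\pm i$. Nothing compensates the sign, and $\sigma_3^+$ by itself does not factor through $A_4$. This does no damage because $8d$ is even: $\chi_{\Sym^{8d}\sigma_3^+}(S)=\sum_{j=0}^{8d}i^{8d-j}(-i)^j=1$. Hence $\chi_{V_{d,3}}(S)=2$ and the multiplicity $2d$ of $\varrho$ both stand.
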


\begin{proof}
Put $\zeta=\e(1/3)$ and write $U=\sigma_3^+$, $\chi=\sigma_3^-$.
Use the bases $\ve_0,\ve_1+\ve_{-1}$ on $E_3^+$ and $\ve_1-\ve_{-1}$ on $E_3^-$.
Equation~\eqref{eq:local-rep} gives
\begin{equation}\label{eq:seed3}
U(S)=\frac{-i}{\sqrt3}\begin{pmatrix}1&2\\1&-1\end{pmatrix},\qquad
U(T)=\begin{pmatrix}1&0\\0&\zeta\end{pmatrix},\qquad
\chi(S)=1,\quad\chi(T)=\zeta.
\end{equation}
By Proposition~\ref{prop:odd-invariants},
\begin{equation}\label{eq:v3}
V_{d,3}\cong\Sym^{8d}U\oplus\chi^{8d}.
\end{equation}
This is an $A_4$-representation by Proposition~\ref{prop:odd-quotients}.
Its dimension is $8d+2$.
The eigenvalues of $U(S)$ are $i,-i$, so
\[
\chi_{\Sym^{8d}U}(S)=\sum_{j=0}^{8d}i^{8d-j}(-i)^j=1.
\]
The additional line has trace $1$ at $S$.
Thus $\chi_{V_{d,3}}(S)=2$.

The dimension and the trace at $S$ determine the multiplicity of $\varrho$.
Let $b$ be its multiplicity, and let $a_j$ denote the multiplicity of $\chi^j$ for $j=0,1,2$.
The character table \eqref{eq:a4-table} gives
\[
a_0+a_1+a_2+3b=8d+2,\qquad a_0+a_1+a_2-b=2.
\]
Subtracting gives $b=2d$.

We determine $a_0,a_1,a_2$ from the $T$-eigenvalues.
The monomials in $\Sym^{8d}U$ have $T$-eigenvalues $\zeta^j$ for $0\le j\le8d$.
The additional line in \eqref{eq:v3} has eigenvalue $\zeta^{8d}$.
Hence the multiplicity of $\zeta^j$ in $V_{d,3}$ is
\[
t_j=\left\lfloor\frac{8d-j}{3}\right\rfloor+1+
\mathbf1_{\{j\equiv8d\pmod3\}}\qquad(j=0,1,2).
\]
Each copy of $\varrho$ contributes one of each $T$-eigenvalue.
Removing these $2d$ copies gives
\[
a_j=t_j-2d=\left\lfloor\frac{2d-j}{3}\right\rfloor+1+
\mathbf1_{\{j\equiv2d\pmod3\}}.
\]
Therefore
\begin{equation}\label{eq:a4-decomposition}
V_{d,3}\cong\mathbf1^{a_0}\oplus\chi^{a_1}\oplus(\chi^2)^{a_2}\oplus\varrho^{2d}.
\end{equation}

We now translate \eqref{eq:a4-decomposition} into generator weights.
The canonical exponents of $\mathbf1,\chi,\chi^2$ are $0,1/3,2/3$.
Lemma~\ref{lem:rank-one} gives generator weights $0,4,8$, respectively.
The exponents of $\varrho$ are $0,1/3,2/3$, with sum one.
Lemma~\ref{lem:rank-three-irreducible} gives weights $2,4,6$.
By \eqref{eq:canonical-splitting}, the generator weights of $M(\rho_{d,3})$ have multiplicities
\[
\begin{array}{c|ccccc}
\text{weight}&0&2&4&6&8\\ \hline
\text{multiplicity}&a_0&2d&2d+a_1&2d&a_2
\end{array}
\]
Finally, \eqref{eq:odd-theta} adds $4d$ to each weight.
\end{proof}

\subsection{Lattice index \texorpdfstring{$D_{8d}^{+}(5)$}{D8d+(5)}}

We determine the generator weights for lattice index $D_{8d}^{+}(5)$
by decomposing $\rho_{d,5}$ into irreducible representations of $A_5$.
The modules $M(\varrho_3)$ and $M(\varrho'_3)$ both have generator weights
$2,4,6$, so only the sum of the multiplicities of $\varrho_3$ and $\varrho'_3$
is needed.

Put
\[
r=\frac{(8d+1)(8d+4)}2,\qquad s=4d+2.
\]
Define the periodic functions $\alpha_d$ and $\beta_d$ by
\[
\begin{array}{c|ccc}
d\bmod3&0&1&2\\ \hline
\alpha_d&2&0&-1
\end{array}
\qquad
\begin{array}{c|ccccc}
d\bmod5&0&1&2&3&4\\ \hline
\beta_d&4&-2&0&0&3
\end{array}
\]

The following lemma computes the character values needed for the irreducible multiplicities.

\begin{lemma}\label{lem:index-five-character}
The representation $V_{d,5}$ has dimension $r$, and its character satisfies
\begin{equation}\label{eq:fullchar5}
\chi_{V_{d,5}}(S)=s,\qquad
\chi_{V_{d,5}}(ST)=\alpha_d,\qquad
\chi_{V_{d,5}}(T)+\chi_{V_{d,5}}(T^2)=\beta_d.
\end{equation}
\end{lemma}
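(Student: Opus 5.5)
The approach is to use the decomposition \eqref{eq:main-sym},
\[
V_{d,5}\cong\Sym^{8d}(\sigma_5^+)\oplus\Sym^{8d}(\sigma_5^-),
\qquad \dim E_5^+=3,\quad \dim E_5^-=2,
\]
and to read off every character value from the generating function \eqref{eq:sym-character}. For each $g$ it then suffices to know the eigenvalues of $\sigma_5^\pm(g)$.

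\emph{Dimension.} We have
\[
\dim\Sym^{8d}(\C^3)+\dim\Sym^{8d}(\C^2)=\binom{8d+2}{2}+(8d+1)=\frac{(8d+1)(8d+4)}{2}=r.
\]

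\emph{The value at $S$.} From the proof of Proposition~\ref{prop:odd-invariants}, $H_5^2=P_5$ and $\gamma_5=-1$, so $\sigma_5(S)=-H_5$. Hence $H_5$ is an involution on $E_5^+$ and squares to $-1$ on $E_5^-$. The traces follow from the Gauss sums
\[
\tr H_5=5^{-1/2}\sum_a\zeta_5^{a^2}=1,
\qquad
\tr(H_5P_5)=5^{-1/2}\sum_a\zeta_5^{-a^2}=1.
\]
Thus $H_5$ has trace $1$ on $E_5^+$, with eigenvalues $1,1,-1$, and trace $0$ on $E_5^-$, with eigenvalues $i,-i$. Consequently $\sigma_5(S)$ has eigenvalues $-1,-1,1$ on $E_5^+$ and $\pm i$ on $E_5^-$. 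Extracting $[u^{8d}]$ from
\[
\frac{1}{(1+u)^2(1-u)}\qquad\text{and}\qquad\frac{1}{1+u^2}
\]
gives $4d+1$ and $1$, respectively, so $\chi_{V_{d,5}}(S)=4d+2=s$.

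\emph{The values at $T$ and $T^2$.} $T$ is diagonal with eigenvalues $\zeta_5^{-\bar2a^2}$, where $\bar2=3$. On $E_5^+$ these are $1,\zeta_5^2,\zeta_5^3$, and on $E_5^-$ they are $\zeta_5^2,\zeta_5^3$; squaring gives the eigenvalues of $T^2$. Each coefficient $[u^{8d}]\prod_\lambda(1-\lambda u)^{-1}$, with $\lambda$ a fifth root of unity, is periodic in $8d\bmod5$, hence in $d\bmod 5$. Summing the contributions from $T$ and $T^2$ and tabulating the five residues should produce $\beta_d$. Pairing $T$ with $T^2$ makes the sum rational, which is why only the sum is asserted.

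\emph{The value at $ST$.} This is the step I expect to be the main obstacle, since $\sigma_5(ST)$ is not diagonal. Its eigenvalues are constrained by the relation
\[
(\sigma_5(S)\sigma_5(T))^3=\sigma_5(S)^2=P_5,
\]
so they are cube roots of $1$ on $E_5^+$ and cube roots of $-1$ on $E_5^-$. To pin them down, I would compute the traces on $E_5^\pm$ as $\tfrac12\bigl(\tr(-H_5T_5)\pm\tr(-H_5T_5P_5)\bigr)$. The diagonal entries of $H_5T_5$ and $H_5T_5P_5$ are quadratic Gauss sums of the shape $5^{-1/2}\sum_a\zeta_5^{ca^2}$, evaluated by the $\gamma_p$ computation already in the paper. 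I would cross-check the result against the determinant $\det\sigma_5^\pm(S)\det\sigma_5^\pm(T)$. Once the eigenvalues are known (I expect $1,\omega,\omega^2$ on $E_5^+$ and $-1,-\omega$ or similar on $E_5^-$), the coefficient of $u^{8d}$ depends only on $8d\bmod3$, equivalently on $d\bmod3$. Evaluating the three cases should give $\alpha_d\in\{2,0,-1\}$. Throughout, the expected answer has to be an integer combination compatible with the $A_5$ character table \eqref{eq:a5-table}, which serves as a sanity check on the final table.
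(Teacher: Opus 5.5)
Your proposal is correct and follows essentially the paper's proof. You use $V_{d,5}\cong\Sym^{8d}W\oplus\Sym^{8d}U$, pin down the eigenvalues of $S$, $ST$, $T$, $T^2$ from traces together with the orders forced by $H_5^2=P_5$ and $(\sigma_5(S)\sigma_5(T))^3=P_5$, and extract $[u^{8d}]$ via \eqref{eq:sym-character}; computing the traces through the projectors $(1\pm P_5)/2$ and Gauss sums, rather than the paper's explicit matrices, is only a cosmetic difference.

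One correction for when you carry it out. The traces of $ST$ are $0$ on $E_5^+$ and $1$ on $E_5^-$, so the eigenvalues on $E_5^-$ are $-\omega,-\omega^2=\e(\pm1/6)$, not $-1,-\omega$. Since these are sixth roots of unity, the reduction to $d\bmod 3$ on that summand needs the fact that $8d$ is even.
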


\begin{proof}
Put $\zeta=\e(1/5)$ and
\[
a=\zeta+\zeta^{-1}=\frac{\sqrt5-1}{2},\qquad
b=\zeta^2+\zeta^{-2}=-\frac{1+\sqrt5}{2}.
\]
Then $a+b=-1$ and $ab=-1$.
Use the basis $\ve_0,\ve_1+\ve_{-1},\ve_2+\ve_{-2}$ on $E_5^+$.
On $E_5^-$ use $\ve_1-\ve_{-1},\ve_2-\ve_{-2}$.
Write $W=\sigma_5^+$ and $U=\sigma_5^-$.
Their matrices are
\begin{align}\label{eq:seed5plus}
W(S)&=-\frac1{\sqrt5}\begin{pmatrix}1&2&2\\1&a&b\\1&b&a\end{pmatrix},&
W(T)&=\operatorname{diag}(1,\zeta^2,\zeta^3),\\
U(S)&=-\frac1{\sqrt5}\begin{pmatrix}c&e\\e&-c\end{pmatrix},&
U(T)&=\operatorname{diag}(\zeta^2,\zeta^3),\label{eq:seed5minus}
\end{align}
where $c=\zeta-\zeta^{-1}$ and $e=\zeta^2-\zeta^{-2}$.
In particular, $ce=-\sqrt5$.
Proposition~\ref{prop:odd-invariants} gives
\begin{equation}\label{eq:v5}
V_{d,5}\cong\Sym^{8d}W\oplus\Sym^{8d}U.
\end{equation}
Its dimension is
\[
r=\binom{8d+2}{2}+(8d+1)=\frac{(8d+1)(8d+4)}2.
\]
By Proposition~\ref{prop:odd-quotients}, this is an $A_5$-representation.
We compute its character using the classes in \eqref{eq:a5-table}.

The class $2A$ is represented by $S$.
The eigenvalues of $W(S)$ are $1,-1,-1$, since $W(S)^2=I$ and $\tr W(S)=-1$.
Those of $U(S)$ are $i,-i$, since $U(S)^2=-I$ and $\tr U(S)=0$.
Equation~\eqref{eq:sym-character} gives
\[
\chi_{\Sym^{8d}W}(S)=[u^{8d}]\frac1{(1-u)(1+u)^2}=4d+1,\qquad
\chi_{\Sym^{8d}U}(S)=[u^{8d}]\frac1{1+u^2}=1.
\]
The first equality follows from $\sum_{j=0}^{8d}(-1)^j(j+1)=4d+1$.
For the second, $4d$ is even.
Hence
\begin{equation}\label{eq:char2}
\chi_{V_{d,5}}(S)=s=4d+2.
\end{equation}

The class $3A$ is represented by $ST$.
We have $\tr W(ST)=-(1+ab)/\sqrt5=0$ and $W(ST)^3=I$.
Thus its eigenvalues are $1,\omega,\omega^2$, where $\omega=\e(1/3)$.
Also $\tr U(ST)=-ce/\sqrt5=1$ and $\det U(ST)=1$.
Its eigenvalues are $\e(1/6),\e(-1/6)$.
Therefore
\[
\chi_{\Sym^{8d}W}(ST)=[u^{8d}]\frac1{1-u^3}=\mathbf1_{\{3\mid d\}},\qquad
\chi_{\Sym^{8d}U}(ST)=\frac{\sin((8d+1)\pi/3)}{\sin(\pi/3)}.
\]
Adding the traces gives
\begin{equation}\label{eq:char3}
\chi_{V_{d,5}}(ST)=\alpha_d,\qquad (\alpha_d)_{d\bmod3}=(2,0,-1).
\end{equation}

The classes $5A$ and $5B$ are represented by $T$ and $T^2$.
A five-cycle is conjugate to its inverse in $A_5$, but its square belongs to the other class.
We compute the sum of the two character values.
For $g=T,T^2$, define
\[
F_g(x)=\sum_{N\ge0}\bigl(\chi_{\Sym^N W}(g)+\chi_{\Sym^N U}(g)\bigr)x^N.
\]
By \eqref{eq:sym-character} and the $T$-eigenvalues above,
\begin{align}\label{eq:char5-series}
F_T(x)&=\frac1{(1-x)(1-\zeta^2x)(1-\zeta^3x)}
       +\frac1{(1-\zeta^2x)(1-\zeta^3x)}\notag\\
&=\frac{(2-x)(1-ax+x^2)}{1-x^5}.
\end{align}
For $T^2$, replace $a$ by $b$.
Since $a+b=-1$,
\[
F_T(x)+F_{T^2}(x)
=\frac{(2-x)(2+x+2x^2)}{1-x^5}
=\frac{4+3x^2-2x^3}{1-x^5}.
\]
Extracting $[x^{8d}]$ gives
\[
\chi_{V_{d,5}}(T)+\chi_{V_{d,5}}(T^2)=\beta_d,
\]
since $8d\equiv3d\pmod5$.
\end{proof}

\begin{theorem}\label{thm:holomorphic-index-five}
The module $J_{*,D_{8d}^+(5)}^{G_d}$ is free of rank
$r=(8d+1)(8d+4)/2$.
Put $s=4d+2$, and define the periodic functions $\alpha_d$ and $\beta_d$ by
\[
(\alpha_d)_{d\bmod3}=(2,0,-1),\qquad
(\beta_d)_{d\bmod5}=(4,-2,0,0,3).
\]
Let $g_j$ denote the number of generators of weight $4d+j$.
The generator weights and their multiplicities are as follows:
\[
\begin{array}{c|l}
\text{weight}&\multicolumn{1}{|c}{\text{multiplicity}}\\ \hline
4d&(r+15s+20\alpha_d+12\beta_d)/60\\
4d+2&(11r-15s-20\alpha_d+12\beta_d)/60\\
4d+4&(r-\alpha_d)/3\\
4d+6&(19r-15s+20\alpha_d-12\beta_d)/60\\
4d+8&(9r+15s-12\beta_d)/60
\end{array}
\]
All these multiplicities are nonnegative integers.
\end{theorem}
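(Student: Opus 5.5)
We follow the pattern of Theorem~\ref{thm:holomorphic-index-three}: decompose $V_{d,5}$ into irreducible $A_5$-representations via Proposition~\ref{prop:odd-characters}, then use \eqref{eq:canonical-splitting} to split $M(\rho_{d,5})$ into the modules attached to the irreducible summands. Write the decomposition as
\[
V_{d,5}\cong\mathbf1^{n_1}\oplus\varrho_3^{n_3}\oplus(\varrho_3')^{n_3'}\oplus\varrho_4^{n_4}\oplus\varrho_5^{n_5}.
\]
The characters of $\varrho_3$ and $\varrho_3'$ differ only by swapping $\varphi$ and $\varphi'$ at $5A,5B$. Also, $\chi_{V_{d,5}}$ takes the same value on $5A$ and $5B$: the two classes are exchanged by the Galois automorphism $\sqrt5\mapsto-\sqrt5$, and the character of $V_{d,5}$ has rational values, being a sum of symmetric powers of representations defined over $\Q(\zeta_5)$ whose traces at $T$ and $T^2$ are Galois conjugate. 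We do not need this, however: only $n_1$, $n_3+n_3'$, $n_4$, $n_5$ enter the weights.

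Pair $\chi_{V_{d,5}}$ with $\mathbf1$, $\varrho_4$, $\varrho_5$ using the class sizes $1,15,20,12,12$ and Lemma~\ref{lem:index-five-character}. Since $\mathbf1$, $\varrho_4$, $\varrho_5$ take equal values at $5A$ and $5B$, only the sum $\beta_d$ of the two five-class values is needed. This gives
\[
n_1=\frac{r+15s+20\alpha_d+12\beta_d}{60},\qquad
n_4=\frac{4r+20\alpha_d-12\beta_d}{60},\qquad
n_5=\frac{5r+15s-20\alpha_d}{60}.
\]
The pair $\varrho_3$, $\varrho_3'$ has summed character $6,-2,0,-1,-1$, so
\[
n_3+n_3'=\frac{6r-30s-12\beta_d}{60}.
\]
One could alternatively get this from the dimension count $n_1+3(n_3+n_3')+4n_4+5n_5=r$.

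Next determine the generator weights of each irreducible summand under the standard holomorphic exponents.
\begin{itemize}
\item For $\mathbf1$: weight $0$.
\item For $\varrho_3$ and $\varrho_3'$: the $T$-eigenvalues are $1,\zeta^{\pm1}$ or $1,\zeta^{\pm2}$, so the canonical exponents sum to $1$, and Lemma~\ref{lem:rank-three-irreducible} gives weights $2,4,6$.
\item For $\varrho_4$: the eigenvalues are $\zeta,\dots,\zeta^4$, since the trace at $T$ is $-1$. The exponents sum to $2$, so Proposition~\ref{prop:rank-four-five} gives $4,6,6,8$.
\item For $\varrho_5$: the eigenvalues are $1,\zeta,\dots,\zeta^4$, since the trace at $T$ is $0$. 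The exponent sum is $2$, giving $2,4,4,6,8$.
\end{itemize}

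Summing the contributions and simplifying gives the generator counts:
\[
\begin{aligned}
g_0&=n_1, & g_2&=(n_3+n_3')+n_5, & g_4&=(n_3+n_3')+2n_5,\\
g_6&=(n_3+n_3')+2n_4+n_5, & g_8&=n_4+n_5.
\end{aligned}
\]
For example, $g_8=(9r+15s-12\beta_d)/60$ and $g_4=(16r-20\alpha_d)/60+\dots$. This simplification step needs care, and I expect the algebra to be the main source of error. For instance, $g_4=(6r-30s-12\beta_d+10r+30s-40\alpha_d)/60$, which is not yet $(r-\alpha_d)/3$. So I will recheck $n_5$ by recomputing $\langle\chi_{V_{d,5}},\varrho_5\rangle$ from the class values $5,1,-1,0,0$.

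Integrality and nonnegativity follow automatically, since the $n_i$ are multiplicities. Finally, \eqref{eq:odd-theta} shifts each weight by $4d$, and freeness of rank $r$ is Theorem~\ref{thm:invariant-theta}.
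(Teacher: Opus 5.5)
Your strategy is the paper's: decompose $V_{d,5}$ under $A_5$, split $M(\rho_{d,5})$ by \eqref{eq:canonical-splitting}, and track only $n_1$, $n_3+n_3'$, $n_4$, $n_5$. However, two steps are wrong, so the proposal as written does not reach the stated formulas for $g_2,g_4,g_6$.

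Your values of $n_1$, $n_4=(4r+20\alpha_d-12\beta_d)/60$ and $n_5=(5r+15s-20\alpha_d)/60$ agree with the paper's, so rechecking $n_5$ will not fix the mismatch you noticed. The first error is the summed character of $\varrho_3\oplus\varrho_3'$ at $5A$ and $5B$. It is $\varphi+\varphi'=1$, not $-1$, so the values are $6,-2,0,1,1$ and
\[
n_3+n_3'=\frac{6r-30s+12\beta_d}{60}=\frac{r-5s+2\beta_d}{10}.
\]
The dimension count you mention would have caught this. One has $60(n_1+4n_4+5n_5)=42r+90s-36\beta_d$, which forces $180(n_3+n_3')=18r-90s+36\beta_d$.

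The second error is that your $g_4$ omits $n_4$. Since $\varrho_4$ has generator weights $4,6,6,8$, it contributes one generator of weight $4$, so $g_4=(n_3+n_3')+n_4+2n_5$. As written, your $g_2$ and $g_6$ carry $-12\beta_d$ and $-36\beta_d$ where the statement has $+12\beta_d$ and $-12\beta_d$. Only $g_0$ and $g_8$ are unaffected. With both corrections, $g_4=(20r-20\alpha_d)/60=(r-\alpha_d)/3$, and all five formulas match.

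Separately, drop the aside that $\chi_{V_{d,5}}$ is rational-valued. Galois conjugacy of $\chi_{V_{d,5}}(T)$ and $\chi_{V_{d,5}}(T^2)$ does not make them equal, and the claim is false. For $d\equiv2\pmod5$, one has $8d\equiv1\pmod5$. The coefficient extraction in Lemma~\ref{lem:index-five-character} then gives $\chi_{V_{d,5}}(T)=-(2a+1)=-\sqrt5$ and $\chi_{V_{d,5}}(T^2)=\sqrt5$. Concretely, for $d=2$ the multiplicities of $\varrho_3$ and $\varrho_3'$ are $5$ and $7$. You do not use the claim, so it is harmless. It is exactly why only the sum $\beta_d$ and the combined multiplicity $n_3+n_3'$ should enter, as in the paper.
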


\begin{proof}
Put $\zeta=\e(1/5)$.
For the irreducible representations, the canonical exponents and generator weights are
\begin{equation}\label{eq:a5-profiles}
\begin{array}{c|c|c}
\text{representation}&\text{canonical }T\text{-exponents}&\text{generator weights}\\ \hline
\mathbf1&0&0\\
\varrho_3&0,1/5,4/5&2,4,6\\
\varrho'_3&0,2/5,3/5&2,4,6\\
\varrho_4&1/5,2/5,3/5,4/5&4,6,6,8\\
\varrho_5&0,1/5,2/5,3/5,4/5&2,4,4,6,8
\end{array}
\end{equation}
For $\varrho_3$ and $\varrho'_3$, restrict \eqref{eq:a5-table} to $\langle\theta_5(T)\rangle$.
The eigenvalues are $1,\zeta,\zeta^4$ and $1,\zeta^2,\zeta^3$.
The representation $\varrho_4$ can be realized as the restriction
of the natural permutation representation of $A_5$ on $\C^5$
to the invariant hyperplane $x_1+\cdots+x_5=0$. It then has eigenvalues
$\zeta,\zeta^2,\zeta^3,\zeta^4$.
For $\varrho_5$, the character vanishes at every nonidentity power of $T$.
Its restriction to $\langle T\rangle$ is therefore the regular representation, with each fifth root occurring once.
The exponent sums are one for $\varrho_3,\varrho'_3$ and two for $\varrho_4,\varrho_5$.
Lemmas~\ref{lem:rank-one} and~\ref{lem:rank-three-irreducible} and Proposition~\ref{prop:rank-four-five} give the weights in \eqref{eq:a5-profiles}.

Let $m_1,m_4,m_5$ be the multiplicities of $\mathbf1,\varrho_4,\varrho_5$.
Let $m_{33}$ be the sum of the multiplicities of $\varrho_3$ and $\varrho'_3$.
These last two representations have the same generator weights.
The characters of $\mathbf1,\varrho_4,\varrho_5$ take equal values on $5A$ and $5B$.
So does $\chi_{\varrho_3}+\chi_{\varrho'_3}$.
Lemma~\ref{lem:index-five-character} and the character inner product in Proposition~\ref{prop:odd-characters} give
\begin{align}\label{eq:a5-mults}
m_1&=\frac{r+15s+20\alpha_d+12\beta_d}{60},&
m_{33}&=\frac{r-5s+2\beta_d}{10},\notag\\
m_4&=\frac{r+5\alpha_d-3\beta_d}{15},&
m_5&=\frac{r+3s-4\alpha_d}{12}.
\end{align}
These numbers are nonnegative integers because they count irreducible summands.

By \eqref{eq:canonical-splitting} and \eqref{eq:a5-profiles}, the generator multiplicities are
\[
\begin{aligned}
g_0&=m_1,&g_2&=m_{33}+m_5,&g_4&=m_{33}+m_4+2m_5,\\
g_6&=m_{33}+2m_4+m_5,&g_8&=m_4+m_5.
\end{aligned}
\]
Substituting \eqref{eq:a5-mults} gives the five formulas in the theorem.
Their sum is $r$, so no other generator weights occur.
Finally, \eqref{eq:odd-theta} adds $4d$ to each weight.
\end{proof}

For $d=2$, the underlying lattice is $D_{16}^{+}$.
Theorems~\ref{thm:holomorphic-index-three} and~\ref{thm:holomorphic-index-five}
give the following generator weights and multiplicities for the holomorphic
Jacobi modules of lattice indices $D_{16}^{+}(3)$ and $D_{16}^{+}(5)$.
These modules have ranks $18$ and $170$, respectively, and both have
generators of each weight $8,10,12,14,16$:
\[
\begin{array}{c|rrrrr|r}
&8&10&12&14&16&\text{rank}\\ \hline
p=3&2&4&7&4&1&18\\
p=5&5&29&57&51&28&170
\end{array}
\]


\begin{thebibliography}{99}

\bibitem{AG20}
D.~Adler and V.~Gritsenko,
\emph{The $D_8$-tower of weak Jacobi forms and applications},
J. Geom. Phys. \textbf{150} (2020), 103616.

\bibitem{Bo98}
R.~E. Borcherds,
\emph{Automorphic forms with singularities on Grassmannians},
Invent. Math. \textbf{132} (1998), no.~3, 491--562.

\bibitem{Bo99}
R.~E. Borcherds,
\emph{The Gross--Kohnen--Zagier theorem in higher dimensions},
Duke Math. J. \textbf{97} (1999), no.~2, 219--233;
correction, Duke Math. J. \textbf{105} (2000), no.~1, 183--184.

\bibitem{Br02}
J.~H. Bruinier,
\emph{Borcherds Products on $O(2,l)$ and Chern Classes of Heegner Divisors},
Lecture Notes in Mathematics, vol.~1780, Springer-Verlag, Berlin, 2002.

\bibitem{CF17}
L.~Candelori and C.~Franc,
\emph{Vector-valued modular forms and the modular orbifold of elliptic curves},
Int. J. Number Theory \textbf{13} (2017), no.~1, 39--63.

\bibitem{CHMY}
L.~Candelori, T.~Hartland, C.~Marks, and D.~Yepez,
\emph{Indecomposable vector-valued modular forms and periods of modular curves},
Res. Number Theory \textbf{4} (2018), no.~2, Article 17.

\bibitem{CM80}
H.~S.~M. Coxeter and W.~O.~J. Moser,
\emph{Generators and Relations for Discrete Groups},
4th ed., Ergebnisse der Mathematik und ihrer Grenzgebiete,
vol.~14, Springer-Verlag, Berlin, 1980.

\bibitem{DLM00}
C.~Dong, H.~Li, and G.~Mason,
\emph{Modular-invariance of trace functions in orbifold theory and generalized moonshine},
Commun. Math. Phys. \textbf{214} (2000), no.~1, 1--56.

\bibitem{EZ85}
M.~Eichler and D.~Zagier,
\emph{The Theory of Jacobi Forms},
Progress in Mathematics, vol.~55, Birkh\"auser, 1985.

\bibitem{FM16}
C.~Franc and G.~Mason,
\emph{Three-dimensional imprimitive representations of the modular group and their associated modular forms},
J. Number Theory \textbf{160} (2016), 186--214.

\bibitem{FM18}
C.~Franc and G.~Mason,
\emph{On the structure of modules of vector-valued modular forms},
Ramanujan J. \textbf{47} (2018), no.~1, 117--139.

\bibitem{FH91}
W.~Fulton and J.~Harris,
\emph{Representation Theory: A First Course},
Graduate Texts in Mathematics, vol.~129, Springer-Verlag, New York, 1991.

\bibitem{Gan14}
T.~Gannon,
\emph{The theory of vector-valued modular forms for the modular group},
in \emph{Conformal Field Theory, Automorphic Forms and Related Topics},
W.~Kohnen and R.~Weissauer (eds.),
Contrib. Math. Comput. Sci., vol.~8,
Springer, Berlin--Heidelberg, 2014, pp.~247--286.

\bibitem{Gri88}
V.~A. Gritsenko,
\emph{Fourier--Jacobi functions of $n$ variables},
Zap. Nauchn. Sem. LOMI \textbf{168} (1988), 32--44;
English transl., J. Soviet Math. \textbf{53} (1991), no.~3, 243--252.

\bibitem{KM03}
M.~I. Knopp and G.~Mason,
\emph{On vector-valued modular forms and their Fourier coefficients},
Acta Arith. \textbf{110} (2003), no.~2, 117--124.

\bibitem{KM04}
M.~I. Knopp and G.~Mason,
\emph{Vector-valued modular forms and Poincar\'e series},
Illinois J. Math. \textbf{48} (2004), no.~4, 1345--1366.

\bibitem{KMlog}
M.~Knopp and G.~Mason,
\emph{Logarithmic vector-valued modular forms},
Acta Arith. \textbf{147} (2011), no.~3, 261--282.

\bibitem{Marks11}
C.~Marks,
\emph{Irreducible vector-valued modular forms of dimension less than six},
Illinois J. Math. \textbf{55} (2011), no.~4, 1267--1297.

\bibitem{MM10}
C.~Marks and G.~Mason,
\emph{Structure of the module of vector-valued modular forms},
J. Lond. Math. Soc. (2) \textbf{82} (2010), no.~1, 32--48.

\bibitem{Mason07}
G.~Mason,
\emph{Vector-valued modular forms and linear differential operators},
Int. J. Number Theory \textbf{3} (2007), no.~3, 377--390.

\bibitem{Sakai26}
K.~Sakai,
\emph{The ring of Weyl invariant $E_8$ Jacobi forms},
Math. Z. \textbf{313} (2026), 71. 


\bibitem{Serre77}
J.-P. Serre,
\emph{Linear Representations of Finite Groups},
Graduate Texts in Mathematics, vol.~42, Springer-Verlag, New York, 1977.

\bibitem{SW22}
K.~Sun and H.~Wang,
\emph{Conway invariant Jacobi forms on the Leech lattice},
Forum Math. \textbf{34} (2022), no.~6, 1591--1619.

\bibitem{SW23}
K.~Sun and H.~Wang,
\emph{Weyl invariant $E_8$ Jacobi forms and $E$-strings},
Commun. Number Theory Phys. \textbf{17} (2023), no.~3, 553--582.

\bibitem{Wang21a}
H.~Wang,
\emph{Weyl invariant $E_8$ Jacobi forms},
Commun. Number Theory Phys. \textbf{15} (2021), no.~3, 517--573.

\bibitem{Wang21}
H.~Wang,
\emph{Weyl invariant Jacobi forms: a new approach},
Adv. Math. \textbf{384} (2021), Article 107752.

\bibitem{WW20}
H.~Wang and B.~Williams,
\emph{On some free algebras of orthogonal modular forms},
Adv. Math. \textbf{373} (2020), Article 107332.

\bibitem{WW23}
H.~Wang and B.~Williams,
\emph{On weak Jacobi forms of rank two},
J. Algebra \textbf{634} (2023), 722--754.

\bibitem{Wir92}
K.~Wirthm\"uller,
\emph{Root systems and Jacobi forms},
Compositio Math. \textbf{82} (1992), no.~3, 293--354.

\bibitem{Zhu96}
Y.~Zhu,
\emph{Modular invariance of characters of vertex operator algebras},
J. Amer. Math. Soc. \textbf{9} (1996), no.~1, 237--302.


\end{thebibliography}
\end{document}